\DeclareMathOperator{\sym}{Sym}
\DeclareMathOperator{\diag}{diag}
\newtheorem{thm}{Theorem}[section]
\newtheorem{lem}[thm]{Lemma}
\newtheorem{prop}[thm]{Proposition}
\newtheorem{cor}[thm]{Corollary}
\theoremstyle{remark}
\numberwithin{equation}{section}
\title[Symmetric powers over function fields]{Cuspidality criterion for symmetric powers \\ of automorphic representations of GL(2) \\ over function fields}
\author[L. Lomel\'i]{Luis Lomel\'i}
\author[J. Navarro]{Javier Navarro}
\date{\today}
\keywords{Langlands Correspondence; Function Fields; Symmetric Powers}
\def\namedlabel#1#2{\begingroup
    #2%
    \def\@currentlabel{#2}%
    \phantomsection\label{#1}\endgroup
}
\let\olditemize\enumerate
\def\enumerate{\olditemize\itemsep=0pt}
\begin{document}

\begin{abstract}
Given a cuspidal automorphic representation of GL(2) over a global function field, we establish a comprehensive cuspidality criterion for symmetric powers. The proof is via passage to the Galois side, possible over function fields thanks to the Langlands correspondence of L. Lafforgue and additional results of G. Henniart and B. Lemaire. Our work is guided by the number fields results of Kim-Shahidi and Ramakrishnan.
\end{abstract}

\maketitle

\section*{Introduction}

Ever since the early beginnings of the Langlands Program, symmetric powers of a cuspidal representation $\pi$ of ${\rm GL}_2(\mathbb{A}_F)$ over a global field $F$ with ring of adèles $\mathbb{A}_F$ have earned a special place in the theory of automorphic forms and representations; Langlands himself noted that their automorphy implies the Ramanujan Conjecture for general linear groups, to say the least.
The literature on the subject is very general and spread over a diverse set of articles. We here have the particular interest of establishing a complete cuspidality criterion for ${\rm Sym}^n\pi$ over a global function field.

Gelbart and Jacquet established the adjoint lift of $\pi$ to ${\rm GL}_3(\mathbb{A}_F)$ in the generality of a global field in \cite{GeJa1978}, namely ${\rm Ad}\,\pi = {\rm Sym}^2\pi \otimes \omega_\pi^{-1}$, where $\omega_\pi$ is the central character of $\pi$. Then ${\rm Sym}^2\pi$ is automorphic and can be written as an isobaric sum. The cuspidality criterion for the symmetric square is already present in their work, in particular, ${\rm Sym}^2\pi$ is cuspidal if and only if $\pi$ is dihedral.

In the number field case, the automorphy of ${\rm Sym}^3\pi$ was established by Kim and Shahidi in \cite{KiSh2002}, where it is also first noted that it is cuspidal unless $\pi$ is dihedral or tetrahedral. They obtain further breakthroughs in \cite{Ki2003,KiSh2002cusp}; Kim proving the automorphy of  ${\rm Sym}^4\pi$, then in Kim and Shahidi one can find a thorough cuspidality criterion including the observation that ${\rm Sym}^4\pi$ is non-cuspidal if $\pi$ is dihedral, tetrahedral or octahedral.  Assuming all $\sym^n\pi$ modular, and still in the number field case, Ramakrishnan provides the criterion for ${\rm Sym}^5\pi$ and ${\rm Sym}^6\pi$ as well as noting that this is enough for higher symmetric powers \cite{Ra2009}. 

In contrast to the literature, we work with $\ell$-adic instead of complex representations on the Galois side, and tackle the function field case. In fact, from this point onwards, $F$ denotes a global function field of characteristic $p$ and $\ell$ is a prime different from $p$. In this scenario, the global Langlands correspondence is a landmark result of V. Drinfeld for ${\rm GL}_2$ \cite{Dr1978} and L. Lafforgue for ${\rm GL}_n$ \cite{LLaff2002}. In particular, as a consequence of their work, we know that ${\rm Sym}^n \pi$ is automorphic in positive characteristic. Thanks to the available machinery over function fields, our proofs are unconditional and we provide a comprehensive criterion including icosahedral representations.

Our main results on the automorphic side of the Langlands correspondence are summarized in Theorem \ref{thm:main:cuspidal}, which provides a cuspidality criterion for the symmetric powers. In order to phrase the criterion in a succint way, we let $M$ be the maximal power such that ${\rm Sym}^{M}\pi$ is cuspidal, writing $M = \infty$ in case every symmetric power of $\pi$ is cuspidal.

\medskip

\noindent{\bf Automorphic Criterion.} \emph{Let $\pi$ be a cuspidal representation of ${\rm GL}_2(\mathbb{A}_F)$. Then ${\rm Sym}^6\pi$ is cuspidal if and only if $M = \infty$. If ${\rm Sym}^6\pi$ is non-cuspidal, then $\pi$ admits the following classification.}
\begin{itemize}
    \item[] \emph{M=1: $\pi$ is dihedral.}
    \item[] \emph{M=2: $\pi$ is tetrahedral.}
    \item[] \emph{M=3: $\pi$ is octahedral.}
    \item[] \emph{M=5: $\pi$ is icosahedral.}
\end{itemize}
\emph{Additionally, ${\rm Sym}^4\pi$ is cuspidal if and only if ${\rm Sym}^5\pi$ is as well.}


\medskip

However, the proof of our automorphic criterion is via passage to the Galois side. Possible, thanks to the work of L. Lafforgue \cite{LLaff2002} as expanded to $\ell$-adic representations by Henniart and Lemaire in \cite{HeLe2011}, results that we summarize in \S~\ref{pass:to:auto}. Given a cuspidal $\pi$ of ${\rm GL}_2(\mathbb{A}_F)$ corresponding to an irreducible $2$-dimensional $\ell$-adic Galois $\sigma$ via the global Langlands correspondence
\[ \pi \longleftrightarrow \sigma, \]
the automorphic representation ${\rm Sym}^n\pi$ is cuspidal if and only if ${\rm Sym}^n\sigma$ is irreducible.
With this in mind, our main result on the Galois side is a reducibility criterion, where $M$ in this setting denotes the maximal irreducible symmetric power. It is  Theorem~\ref{thm:symm:Galois}, whose contents are as follows.

\medskip

\noindent{\bf Galois criterion.} \emph{Let $\sigma$ be an irreducible 2-dimensional $\ell$-adic Galois representation, then the following are equivalent:}
\begin{itemize}
    \item[(i)] \emph{$\sigma$ has finite image in ${\rm PGL}_2(\overline{\mathbb{Q}}_\ell)$.}
    \item[(ii)] \emph{There exists an integer $n\geq 2$ such that $\,{\rm Sym}^n\sigma$ is reducible.}
    \item[(iii)] \emph{${\rm Sym}^6\sigma$ is reducible.}
\end{itemize}
\emph{Specifically, we have the following classification when these properties are met.}
\begin{itemize}
    \item[] \emph{M=1: $\sigma$ is dihedral.} 
	\item[] \emph{M=2: $\sigma$ is tetrahedral.} 
	\item[] \emph{M=3: $\sigma$ is octahedral}. 
	\item[] \emph{M=5: $\sigma$ is icosahedral.} 
\end{itemize}
\emph{Additionally, ${\rm Sym}^5\sigma$ is reducible if and only if ${\rm Sym}^4\sigma$ is as well.}

\medskip

Let us make a few comments around the statement of the criterion. First, from general representation theoretical results, we deduce that the heart of the irreducibility criteria is contained in the cases of $n=2,3,4$ and $6$. We next prove that if ${\rm Sym}^n \sigma$ is reducible then there exists an open subgroup $H$ of $G_F$ such that the restriction of $\sigma$ to $H$ is reducible, say $\sigma_H=\mu_1\oplus \mu_2$ for $\ell$-adic characters $\mu_1$ and $\mu_2$. Now, from observations made by Henniart-Lemaire on $\ell$-adic representations \cite{HeLe2011}, $\mu_1$ and $\mu_2$ have open kernels in the Weil group of the extension of $F$ attach to $H$. We then prove that the image $J$ of $\sigma$ in ${\rm PGL}_2(\overline{\mathbb{Q}}_\ell)$, considering $\sigma$ as a homomorphism into ${\rm GL}_2(\overline{\mathbb{Q}}_\ell)$, is a finite group. In this case, $\sigma$ is defined to be dihedral, tetrahedral, octahedral, or icosahedral, according to $J$ being one of these finite groups, and observe that it cannot be abelian due to the irreducibility of $\sigma$. We note that this classification depends only on the isomorphism class of $\sigma$.

Let us now present the contents of the article in a more detailed fashion. We begin with a section on the preliminaries. In particular, we recall the representation theoretic Clebsch-Gordan formulas in \S~\ref{clebshgordan}. We proceed in \S~\ref{sec:Galois:crit:I} to prove two general useful lemmas concerning the reducibility of symmetric powers. First, if $\sym^N(\sigma)$ is reducible for some positive integer $N$, then it is reducible hence forwards for $n \geq N$. This tells us that the maximal irreducible symmetric power $M$, when it exists as a positive integer, is indeed determined by the fact that $\sym^M(\sigma)$ is irreducible, while $\sym^{M+1} (\sigma)$ is reducible. When $M$ is finite, the second lemma further reduces to $M<6$. However, there are representations for which $M=\infty$, i.e., every $\sym^n(\sigma)$ is irreducible, cf. Igusa \cite{igusa1959fibreIII}.



For $n<6$, the irredubility criteria for $\sym^n(\sigma)$ are examined in \S~\ref{sec:Galois:crit:II}. As we shall see by passing to the automorphic side in \S~\ref{pass:to:auto}, our cuspidality criteria align with the well established results of Gelbart-Jacquet over global fields and Kim-Shahidi over number fields. For general $n$, we are very much influenced by the number fields approach of Ramakrishnan.

Specifically, for the case of $n=2$, we show that $\sym^2 (\sigma)$ is reducible if and only if $\sigma\cong\sigma\otimes\chi$ for some non-trivial quadratic character $\chi$. In this case, we find that $H=\ker \chi$ leads to $\sigma_H$ being reducible. Additionally, the image of $H$ in ${\rm PGL}_2(\overline{\mathbb{Q}}_\ell)$ is an abelian index-two subgroup of $J$, implying that $J$ must be dihedral.

Moving on to the case of $n=3$, we establish that if $\sigma$ is non-dihedral, then $\sym^3 (\sigma)$ is reducible if and only if $\sym^2 (\sigma)\cong\sym^2(\sigma)\otimes\mu$ for some non-trivial cubic character $\mu$. In this situation, if $H'=\ker \mu$, then $\sigma_{H'}$ is dihedral, leading to the existence of an index-two subgroup $H$ of $H'$ such that $\sigma_H$ is reducible. We observe that the image of $H'$ in ${\rm PGL}_2(\overline{\mathbb{Q}}_\ell)$ is an index-three dihedral subgroup of $J$, then $J$ is tetrahedral.

Finally, for $n=4$, we prove that if $\sigma$ is neither dihedral nor tetrahedral, then $\sym^4 (\sigma)$ is reducible if and only if there exists a non-trivial quadratic character $\chi$ such that $\sym^3 (\sigma)\cong\sym^3(\sigma)\otimes\chi$. In this case, the kernel $H'=\ker \chi$ results in $\sigma_{H'}$ being tetrahedral. Consequently, there exists an index-six subgroup $H$ of $H'$ such that $\sigma_H$ is reducible. In this case, the group $J$ is necessarily octahedral. It is noteworthy that in each of these cases, the image of $H$ in ${\rm PGL}_2(\overline{\mathbb{Q}}_\ell)$ is an abelian group appearing in a subnormal series with abelian factors of $J$, which reflects the solvability of the dihedral, tetrahedral and octahedral groups. 

The non-solvable icosahedral case requires a separate treatment. We explore this case in \S~\ref{sec:Galois:crit:III} and take the opportunity to gather the remaining reducibility Galois criteria that involve $\sym^6(\sigma)$. There we assume $\sigma$ is not solvable polyhedral, i.e., neither dihedral, tetrahedral nor octahedral. We begin by proving that $\sym^6(\sigma)$ is reducible if and only if $\sym^5(\sigma)\cong {\rm Ad}(\sigma)\otimes\sigma'$, for another irreducible 2-dimensional $\ell$-adic representation $\sigma'$. The existence of the representation $\sigma'$ was expected in characteristic $p$ since it appears in the characteristic zero case in the literature, cf. \cite{Ki2004}, \cite{Ra2009} and \cite{Wang2003}. We then prove, and this is crucial, that $\sym^3(\sigma) \cong \sym^3(\widetilde\sigma)$, where $\widetilde\sigma = \sigma'\otimes\xi$, for some character $\xi$. Furthermore, for suitable bases, there exists an isomorphism $\sym^3(\sigma) \xrightarrow{\, \sim\, } \sym^3(\widetilde\sigma)$ of the form $\sym^3(g)$, for some $g\in{\rm GL}_2(\overline{\mathbb{Q}}_\ell)$. Then, using that the homomorphism $\sym^3\colon {\rm GL}_2(\overline{\mathbb{Q}}_\ell)\to{\rm GL}_4(\overline{\mathbb{Q}}_\ell)$ has finite kernel, we deduce that there exists an open subgroup $H'$ (which shall be denoted by $H$ in \S~\ref{sec:Galois:crit:III}) of $G_F$ such that $\sigma_{H'}\cong\widetilde\sigma_{H'}$. From the relation $\sym^5(\sigma)\cong {\rm Ad}(\sigma)\otimes\sigma'$, we conclude that $\sym^5(\sigma_{H'})$ is reducible, obtaining in this way an open subgroup $H$ of $G_F$ such that $\sigma_H$ is reducible.

We conclude with a passage from irreducible Galois $\sigma$ to cuspidal automorphic $\pi$ via the global Langlands correspondence of L. Lafforgue \cite{LLaff2002}, incorporating the results of Henniart-Lemaire \cite{HeLe2011}. We do this in \S~\ref{pass:to:auto}, after setting up the preliminaries. This enables us to obtain the Automorphic Criterion mentioned above, as well as more delicate cuspidality criteria for $\sym^n(\pi)$ corresponding in turn to the Galois treatment for the cases when $n<6$ and the separate discussion involving the case of $n=6$ and icosahedral representations.

\subsection*{Acknowledgments} We thank Guy Henniart for mathematical communications. The first author is grateful to the Institut des Hautes \'Etudes Scientifiques for the hospitality provided during a summer visit in 2023, when this article was finalized. He was supported in part by FONDECYT Grant 1212013. The second author was partially supported by USM PIIC Initiation to Scientific Research Program.

\section{Preliminaries}\label{prelim}

\subsection{}
We let $F$ be a global function field of characteristic $p$ and fix a separable closure $\overline{F}$. We denote by $\mathbb{A}_F$ the ring of adèles of $F$. We also fix a prime number $\ell\not = p$ and an algebraic closure $\overline{\mathbb{Q}}_\ell$ of the $\ell$-adic numbers $\mathbb{Q}_\ell$.  We write $G_F$ and $\mathcal{W}_F$ for the absolute Galois group ${\rm Gal}(\overline{F}/F)$ and the Weil group of $F$ corresponding to $\overline{F}$, respectively. We assume all separable field extensions $E/F$ lie inside $\overline{F}$. 

Let $\tau$ be either a homomorphism $G_F\to {\rm GL}(V)$ or $\mathcal{W}_F\to {\rm GL}(V)$, where $V$ is a $\overline{\mathbb{Q}}_\ell$-vector space endowed with the topology induced from $\overline{\mathbb{Q}}_\ell$. We say that $\tau$ is an $\ell$-adic representation of $G_F$, resp. of $\mathcal{W}_F$, if it is continuous, unramified at almost every place of $F$, and defined over a finite extension of $\mathbb{Q}_\ell$ (cf. \S~IV.2.1 of \cite{HeLe2011}). If $H$ is a subgroup of $G_F$, we use $\tau_H$ to denote the restriction $\tau|_H$. If $E/F$ is a separable field extension, we write $\tau_E$ for the restriction $\tau_{G_E}$.

We can view a finite-dimensional $\ell$-adic representation $\tau\colon G_F\to {\rm{GL}}(V)$ as a continuous homomorphism $\tau\colon G_F\to {\rm{GL}}_m(\overline{\mathbb{Q}}_\ell)$, where $m$ is the dimension of $V$, after choosing a basis. For every integer $n\geq 1$, we have the $n$-th symmetric power map
\[ \sym^n\colon{\rm{GL}}_m(\overline{\mathbb{Q}}_\ell)\to {\rm{GL}}_N(\overline{\mathbb{Q}}_\ell), \text{ where } N=\left( \begin{array}{c}
         m+n-1  \\
          n
    \end{array}\right). \]
It is well known, since Chevalley, that if $\tau$ is semisimple then $\sym^n(\tau)$ is semisimple.

Additionally, we introduce the following notation:
\[ A^n(\tau) = \sym^n(\tau) \otimes \omega_\tau^{-1}, \text{ where } \omega_\tau = \det(\tau). \]
In the particular case when $m=2$, we shall write ${\rm{Ad}}(\tau)$ instead of $A^2(\tau)$,
\begin{equation*}
   \begin{tikzcd}[column sep=small]
   {\rm GL}_2(\overline{\mathbb{Q}}_\ell) \arrow[rr,"\textcolor{black}{{\rm Ad}}"] & & {\rm GL}_3(\overline{\mathbb{Q}}_\ell) \\
	&	{\rm PGL}_2(\overline{\mathbb{Q}}_\ell) \arrow[ul,leftarrow] \arrow[ur]
   \end{tikzcd}
\end{equation*}
since it is the adjoint map from ${\rm GL}_2$ to ${\rm GL}_3$ of Gelbart-Jacquet \cite{GeJa1978}.
We further observe that
\begin{equation}\label{eq:A1:dual} 
A^1(\tau) = \tau\otimes \omega_\tau^{-1}\cong \tau^{\vee},
\end{equation}
where $\tau^\vee$ denotes the contragradient representation of $\tau$.

\subsection{}\label{clebshgordan}
Let us recall the Clebsch-Gordan formulas for symmetric powers of ${\rm{GL}}_2(\overline{\mathbb{Q}}_\ell)$. Start by taking $\sym^1 = {\rm{Id}}$. Next, if $g\in {\rm{GL}}_2(\overline{\mathbb{Q}}_\ell)$ and $i$, $m$ are non-negative integers such that $i\leq m/2$, then 
    \begin{equation*}
        \sym^i(g)\otimes\sym^{m-i}(g)\cong \bigoplus_{j=0}^i\sym^{m-2j}(g)\otimes \det(g)^j.
    \end{equation*}
Here the isomorphism is obtained via matrix conjugation.

\subsection{}\label{repfacts}
We gather here some useful representation theoretical facts, valid for any group $G$ and finite-dimensional representations over an algebraically closed field of characteristic zero. To be specific, let $(\sigma,V)$, $(\rho,W)$ and $(\tau, U)$ be finite-dimensional representations of $G$ over $\overline{\mathbb{Q}}_\ell$. Let us denote by ${\rm hom}_G(\rho,\tau)$ the representation of $G$ on the vector space ${\rm Hom}_{\overline{\mathbb{Q}}_\ell}(W,U)$ given by 
\[
(gT)(w)=\tau(g)(T(\sigma(g^{-1})(w)), \quad g\in G, T\in {\rm Hom}_{\overline{\mathbb{Q}}_\ell}(W,U), w\in W.
\]
The map 
\[
\begin{split}
	{\rm Hom}_G(\sigma\otimes \rho, \tau) & \longrightarrow {\rm Hom}_G(\sigma, {\rm hom}_G(\rho,\tau)) \\
	f & \longmapsto f^*\colon v \mapsto (w \mapsto f(v\otimes w))
\end{split}
\]
is a (natural) isomorphism. On the other hand, the bilinear map 
\[
\begin{split}
	W^\vee\times U & \longrightarrow {\rm Hom}_{\overline{\mathbb{Q}}_\ell}(W,U) \\
	(T,u) & \longmapsto T_u \colon w\mapsto T(w) u
\end{split}
\]
gives rise to a (natural) $G$-isomorphism 
\[
\rho^\vee\otimes \tau \xrightarrow{\ \ \sim\ \ } {\rm hom}_G(\rho,\tau).
\]
Thus, we have an isomorphism 
\begin{itemize}
\item[\namedlabel{eq:prel:Hom_iso}{(1.2)}]
${\rm Hom}_G(\sigma\otimes \rho, \tau) \xrightarrow{\ \ \sim \ \ } {\rm Hom}_G(\sigma, \rho^\vee\otimes \tau). $
\end{itemize}
And, as a consequence we have the following property: 

\begin{itemize}
\item[\namedlabel{item:app:3.2}{(1.3)}] Let $\rho,\sigma$ and $\tau$ be semisimple finite-dimensional representations of $G$, with $\sigma$ irreducible. If $\tau$ is a subrepresentation of $\rho \otimes \sigma$, then $\sigma$ is a subrepresentation of $\rho^\vee\otimes \tau$.
\end{itemize}

\subsection{}\label{clasification:repsGL2}

We work with a given $2$-dimensional irreducible $\ell$-adic Galois representation $\sigma$. When viewed as a homomorphism $\sigma \colon G_F \to {\rm GL}_2(\overline{\mathbb{Q}}_\ell)$, we let ${\rm proj}$ be the canonical projection from ${\rm GL}_2(\overline{\mathbb{Q}}_\ell)$ to ${\rm PGL}_2(\overline{\mathbb{Q}}_\ell)$.
\begin{equation*}
   \begin{tikzcd}[column sep=small]
   G_F \arrow[r,"\textcolor{black}{\sigma}"] & {\rm GL}_2(\overline{\mathbb{Q}}_\ell) \arrow[d,"\textcolor{black}{{\rm proj}}"] \\
	&	{\rm PGL}_2(\overline{\mathbb{Q}}_\ell) \arrow[ul,leftarrow,"\textcolor{black}{{\rm proj} \cdot \sigma}"]
   \end{tikzcd}
\end{equation*}

Let $J$ be the image ${\rm proj}(\sigma(G_F))$. In contrast to the complex finite-dimensional case, where a continuous homomorphism $G_F\to{\rm GL}_m(\mathbb{C})$ always has open kernel, this may not be the case for $\ell$-adic representations. In particular, $J$ may or may not be finite.

In the case of finite image, we define $\sigma$ to be dihedral, tetrahedral, octahedral, or icosahedral, according to $J$ being one of these finite groups. We note that this classification depends only on the isomorphism class of $\sigma$. One of our main results shows that $J$ being finite is equivalent to $\sigma$ having open kernel, and in turn equivalent to $\sym^n(\sigma)$ being reducible for some $n$. 

We adopt a similar terminology for $\ell$-adic representations of the Weil group. Namely, if $\sigma\colon \mathcal{W}_F\to{\rm GL}_2(\overline{\mathbb{Q}}_\ell)$ is an irreducible $\ell$-adic representation such that ${\rm proj}(\sigma(\mathcal{W}_F))$ is finite, then we define $\sigma$ to be dihedral, tetrahedral, octahedral, or icosahedral, depending on ${\rm proj}(\sigma(\mathcal{W}_F))$. These definitions are in accordance to the complex case found in \cite{langlands96base}. We point out that if $\sigma \colon \mathcal{W}_F \to {\rm GL}_2(\mathbb{C})$ is a continuous irreducible representation then, as noted by Langlands in [\emph{loc.\,cit.}], the image ${\rm proj}(\sigma(\mathcal{W}_F))$ is a finite subgroup of ${\rm PGL}_2(\mathbb{C})$.

\section{Reducibility Criteria for Symmetric Powers of Galois Representations: General Notions} \label{sec:Galois:crit:I}

In this section we address two general lemmas on the irreducibility of the symmetric powers of an irreducible two-dimensional $\ell$-adic representation
\[ \sigma \colon G_F \to {\rm{GL}}(V). \]
Their proofs involve the general representation theoretic Clebsch-Gordan formulas recalled in \S~\ref{clebshgordan}, coupled with the reducibility results summarized in \S~\ref{repfacts}.

\subsection{}
The first of the two lemmas allows us to make precise the notion of maximal irreducible symmetric power of $\sigma$. We write $M = \infty$ in case every symmetric power of $\sigma$ is irreducible. Otherwise, it is the positive integer $M$ for which ${\rm Sym}^{M}(\sigma)$ is irreducible, while ${\rm Sym}^{M+1}(\sigma)$ is reducible.

\begin{lem}\label{lem:redsymm:geqn}
Assume that $\sym^N(\sigma)$ is reducible for a given integer $N>1$. Then $\sym^n(\sigma)$ is reducible for all $n\geq N$. 
\end{lem}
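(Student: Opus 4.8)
The plan is to use the Clebsch-Gordan formula from \S~\ref{clebshgordan} to express $\sym^n(\sigma)$ in terms of $\sym^N(\sigma)$ and a lower symmetric power, thereby propagating reducibility. Concretely, suppose $\sym^N(\sigma)$ is reducible, and let $n \geq N$. Write $n = N + k$ with $k \geq 0$. I would like to exhibit $\sym^N(\sigma)$, twisted by a suitable power of $\det(\sigma)$, as a subrepresentation of $\sym^n(\sigma) \otimes (\text{something})$, or better, realize $\sym^n(\sigma)$ as containing a twist of $\sym^N(\sigma)$ tensored with a nontrivial factor. The cleanest route: apply Clebsch-Gordan to $\sym^1(\sigma) \otimes \sym^{n-1}(\sigma)$, which gives
\[
\sigma \otimes \sym^{n-1}(\sigma) \cong \sym^n(\sigma) \oplus \bigl(\sym^{n-2}(\sigma) \otimes \det(\sigma)\bigr),
\]
valid for $n \geq 2$. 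This lets one run an induction on $n$: I would first show the step from $N$ to $N+1$, then iterate.

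The key steps, in order: (1) establish the base case $n = N$, which is the hypothesis. (2) For the inductive step, assume $\sym^{n-1}(\sigma)$ is reducible (for $n - 1 \geq N$) and deduce $\sym^n(\sigma)$ reducible. Since $\sigma$ is semisimple, all symmetric powers are semisimple by the Chevalley fact recalled in the preliminaries, so every subrepresentation is a direct summand. From the displayed decomposition, $\sym^n(\sigma)$ is a summand of $\sigma \otimes \sym^{n-1}(\sigma)$. Now $\sym^{n-1}(\sigma)$ reducible means $\sym^{n-1}(\sigma) = \rho_1 \oplus \rho_2$ with both $\rho_i \neq 0$; then $\sigma \otimes \sym^{n-1}(\sigma) = (\sigma \otimes \rho_1) \oplus (\sigma \otimes \rho_2)$, and counting dimensions, $\dim \sigma \otimes \sym^{n-1}(\sigma) = 2n$ while $\dim \sym^n(\sigma) = n+1 < 2n$ for $n \geq 2$, and $\dim \sym^{n-2}(\sigma)\otimes\det = n-1$. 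So $\sym^n(\sigma)$ cannot equal the whole tensor product; the question is whether it could nonetheless be irreducible as a summand. (3) To rule that out, I would use property \ref{item:app:3.2}: if $\sym^n(\sigma)$ were irreducible and a subrepresentation of $\sigma \otimes \sym^{n-1}(\sigma)$, then $\sym^{n-1}(\sigma)$ would be a subrepresentation of $\sigma^\vee \otimes \sym^n(\sigma)$ — but this does not immediately give a contradiction either.

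The cleaner argument, and the one I expect to actually use, goes the other direction: apply Clebsch-Gordan to get $\sym^{n-2}(\sigma)\otimes\det(\sigma)$ as a summand of $\sigma \otimes \sym^{n-1}(\sigma)$, hence (after untwisting) $\sym^{n-2}(\sigma)$ is a constituent related to $\sym^{n-1}(\sigma)$; inductively $\sym^{n-2}(\sigma)$ is reducible once we are far enough, and then reducibility of $\sym^{n-1}(\sigma)\otimes\sigma$ forces reducibility of at least one of $\sym^n(\sigma)$ or $\sym^{n-2}(\sigma)\otimes\det(\sigma)$ — the latter being already reducible. That still leaves the edge case where $\sym^{n-2}(\sigma)$ is irreducible. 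To handle all cases uniformly I would instead argue: if $\sym^n(\sigma)$ is \emph{irreducible}, then since it is a summand of $\sigma\otimes\sym^{n-1}(\sigma) = (\sigma\otimes\rho_1)\oplus(\sigma\otimes\rho_2)$, it must be a summand of $\sigma\otimes\rho_i$ for one $i$; by \ref{item:app:3.2}, $\sym^n(\sigma)\subseteq \sigma\otimes\rho_i$ forces, after suitable manipulation with $\sigma^\vee \cong \sigma\otimes\omega_\sigma^{-1}$, that $\dim\rho_i \geq \dim\sym^n(\sigma)/2 = (n+1)/2$; combined with $\dim\rho_1 + \dim\rho_2 = n$ this is not yet contradictory, so one needs the finer structure of Clebsch-Gordan decompositions of $\sigma\otimes\sym^{n-1}(\sigma)$.

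The main obstacle is precisely this: ruling out that $\sym^n(\sigma)$ remains irreducible even though $\sym^{n-1}(\sigma)$ decomposes. I expect the resolution is to decompose $\sym^{n-1}(\sigma)$ fully into its isotypic pieces and use the Clebsch–Gordan formula together with the observation that $\sym^n(\sigma)$ appears in $\sigma\otimes\sym^{n-1}(\sigma)$ with multiplicity one and "spans across" the summands $\sigma\otimes\rho_i$ in a way incompatible with irreducibility — more precisely, that $\sigma \otimes \tau$ for $\tau$ irreducible of dimension $d$ decomposes with constituents of dimension at most $d+1$, so tracking dimensions through $\sym^n(\sigma) = $ (a constituent of $\sigma\otimes\rho_1$) $\oplus$ (a constituent of $\sigma\otimes\rho_2$) forces the sum to be reducible unless one $\rho_i$ is trivial, which it is not. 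I would organize the final writeup around this dimension/constituent bookkeeping, invoking \ref{item:app:3.2} and semisimplicity at each step, and present the induction starting from $N$.
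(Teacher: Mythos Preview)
Your toolkit is right (Clebsch--Gordan, semisimplicity, property \ref{item:app:3.2}, dimension count) but the way you have arranged the induction leaves a real gap that you yourself flag and do not close. With $\sym^{n-1}(\sigma)=\rho_1\oplus\rho_2$ sitting as a \emph{factor} of the tensor product $\sigma\otimes\sym^{n-1}(\sigma)$, an irreducible $\sym^n(\sigma)$ need only land inside $\sigma\otimes\rho_i$ for \emph{one} $i$, yielding $2\dim\rho_i\ge n+1$; the other $\rho_j$ is unconstrained, and $\dim\rho_1+\dim\rho_2=n$ is perfectly compatible with this. Your proposed rescue, that ``$\sigma\otimes\tau$ for $\tau$ irreducible of dimension $d$ has constituents of dimension at most $d+1$,'' is false in general: that bound is a feature of the Clebsch--Gordan rule when $\tau$ is a symmetric power of $\sigma$, but there is no reason for the summands $\rho_i$ of $\sym^{n-1}(\sigma)$ to be symmetric powers of $\sigma$.

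The fix, which is what the paper does, is to shift the Clebsch--Gordan identity by one so that the \emph{reducible} object appears as a \emph{summand} rather than a tensor factor. Assuming $\sym^n(\sigma)=\sigma_1\oplus\sigma_2$ is reducible, write
\[
\sigma\otimes\sym^{n+1}(\sigma)\;\cong\;\sym^{n+2}(\sigma)\ \oplus\ \sym^n(\sigma)\otimes\omega_\sigma.
\]
Now \emph{both} $\sigma_1\otimes\omega_\sigma$ and $\sigma_2\otimes\omega_\sigma$ are subrepresentations of $\sigma\otimes\sym^{n+1}(\sigma)$. If $\sym^{n+1}(\sigma)$ were irreducible, property \ref{item:app:3.2} (with $\sym^{n+1}(\sigma)$ playing the role of the irreducible tensor factor) gives $\sym^{n+1}(\sigma)\hookrightarrow \sigma^\vee\otimes\sigma_i\otimes\omega_\sigma\cong\sigma\otimes\sigma_i$ for \emph{each} $i$, hence $n+2\le 2\dim\sigma_i$ for both $i$. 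Summing yields $2(n+2)\le 2(n+1)$, a contradiction. This is exactly the two--sided bound your arrangement cannot produce.
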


\begin{proof}
    We assume $\sym^n(\sigma)$ is reducible, and we know that it is semisimple, hence we can write $\sym^n(\sigma)=\sigma_1\oplus \sigma_2$, for subrepresentations $\sigma_1$ and $\sigma_2$. By Clebsch-Gordan, we have that
    \begin{equation*}
        \sigma\otimes \sym^{n+1}(\sigma)=\sym^{n+2}(\sigma)\oplus \sym^n(\sigma)\otimes \omega_\sigma.
    \end{equation*}
    Then  $\sigma_i\otimes \omega_\sigma$ is a subrepresentation of $\sigma\otimes \sym^{n+1}(\sigma)$ for $i=1,2$. 
    
    If $\sym^{n+1}(\sigma)$ were irreducible, then $\sym^{n+1}(\sigma^\vee)$ would be a subrepresentation of $\sigma\otimes \sigma_i^\vee\otimes \omega_\sigma^{-1}\cong (\sigma\otimes\sigma_i)^\vee$ for each $i=1,2$, by Property \ref{item:app:3.2}. In particular, 
    \[ \dim \sigma\otimes\sigma_i=2\dim \sigma_i \geq \dim \sym^{n+1}(\sigma)=n+2, \] for $i=1,2$, but 
    \[ \dim \sigma_1 +\dim \sigma_2 =\dim \sym^n(\sigma)=n+1. \] 
Therefore, $\sym^{n+1}(\sigma)$ must also be reducible.
\end{proof}

An interesting fact is the existence of representations with $M=\infty$ and whose corresponding image $J$ in ${\rm PGL}_2(\overline{\mathbb{Q}}_\ell)$ is infinite. For instance, for $\ell$-adic representations associated with certain elliptic curves defined over $F$, Igusa proved in \cite{igusa1959fibreIII} an analogous statement to the well known Serre's open image theorem \cite{Se1972}.

\subsection{}
The second lemma significantly reduces the number of cases required in the reducibility criterion.

\begin{lem}\label{lem:red:symm}
    If $M$ is finite, then $M=1,2,3$ or $5$.
\end{lem}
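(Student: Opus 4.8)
The plan is to show that if $\sym^4(\sigma)$ and $\sym^6(\sigma)$ are both irreducible, then $\sym^n(\sigma)$ is irreducible for all $n$, which by Lemma~\ref{lem:redsymm:geqn} forces $M \in \{1,2,3,4,5\}\cup\{\infty\}$, and then to separately rule out $M=4$. Combining these, the only finite possibilities are $M = 1,2,3,5$. The main tool is again Clebsch-Gordan together with Property~\ref{item:app:3.2}, used to propagate irreducibility upward in steps of size larger than one.

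First I would record the basic propagation mechanism. Suppose $\sym^a(\sigma)$ and $\sym^b(\sigma)$ are irreducible with $a \le b$. Clebsch-Gordan gives
\[
\sym^a(\sigma)\otimes\sym^b(\sigma) \cong \bigoplus_{j=0}^{a}\sym^{a+b-2j}(\sigma)\otimes\omega_\sigma^{\,j},
\]
so each $\sym^{a+b-2j}(\sigma)\otimes\omega_\sigma^{j}$ is a subrepresentation of $\sym^a(\sigma)\otimes\sym^b(\sigma)$, a representation of dimension $(a+1)(b+1)$. If some summand $\sym^{a+b-2j}(\sigma)$ were reducible, write it as $\rho_1\oplus\rho_2$; then by Property~\ref{item:app:3.2}, $\sym^b(\sigma)$ embeds in $\sym^a(\sigma)^\vee\otimes(\rho_i\otimes\omega_\sigma^{j})$ for each $i$, forcing $(a+1)\dim\rho_i \ge b+1$ for both $i$, hence $(a+1)(\dim\rho_1+\dim\rho_2) \ge 2(b+1)$, i.e. $(a+1)(a+b-2j+1)\ge 2(b+1)$. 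For the top term $j=0$ this reads $(a+1)^2 \ge 1$, which is no contradiction — that is the familiar obstruction — but for the lower terms, when $a$ is small relative to $b$, this inequality fails and we conclude that those summands are irreducible. Concretely, taking $a=2$: the summand $\sym^{b-2}(\sigma)\otimes\omega_\sigma$ forces $3(b-1)\ge 2(b+1)$, i.e. $b\ge 5$, so for $b\le 4$ the middle term must be irreducible as well; and taking $a$ a bit larger one controls $\sym^{b-4}$, etc. I would assemble these into: irreducibility of $\sym^2,\sym^3,\sym^4,\sym^6$ implies irreducibility of all $\sym^n$. The cleanest route is to show $\sym^5(\sigma)$ irreducible (so $\sym^4$ irreducible forces $\sym^5$ — this matches the statement ``$\sym^4\sigma$ reducible iff $\sym^5\sigma$ reducible''), and then run an induction: if $\sym^n$ is irreducible for $n\le k$ with $k\ge 6$, pair $\sym^2$ with $\sym^{k-1}$ (now $b=k-1\ge 5$, so the top term $\sym^{k+1}$ is the only one we need) — but we also need the other summand $\sym^{k-3}$ to be irreducible, which holds by the inductive hypothesis, so by a dimension count on $\sym^2(\sigma)\otimes\sym^{k-1}(\sigma)$ the summand $\sym^{k+1}(\sigma)$ is irreducible.

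The hard part, and where I expect the real work to lie, is twofold: (a) pinning down exactly which small pairings $(a,b)$ are needed so that the hypothesis set $\{\sym^2,\sym^3,\sym^4,\sym^6\}$ irreducible genuinely suffices — in particular getting $\sym^5$ from $\sym^4$ (and $\sym^6$), since naively pairing $\sym^2$ with $\sym^3$ gives $\sym^5\oplus\sym^1\otimes\omega_\sigma$ of dimension $12$ and the dimension inequality $3\cdot\dim\rho_i\ge 6$ is not by itself contradictory, so one must use that $\sym^1(\sigma)\cong\sigma^\vee$ is irreducible to separate it off and argue more carefully, or instead pair $\sym^2$ with $\sym^4$ to get $\sym^6\oplus\sym^4\otimes\omega_\sigma\oplus\sym^2\otimes\omega_\sigma^2$ and feed back the assumed irreducibility of $\sym^6$; and (b) ruling out $M=4$, i.e. showing $\sym^4(\sigma)$ irreducible $\Rightarrow$ $\sym^5(\sigma)$ irreducible, which is the case $\sym^5$ reducible iff $\sym^4$ reducible promised in the introduction and is presumably what makes $4$ impossible as a maximal value. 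For (b) I would argue: if $\sym^5(\sigma)$ were reducible, $\sym^5(\sigma)=\rho_1\oplus\rho_2$, then from $\sigma\otimes\sym^4(\sigma)\cong\sym^5(\sigma)\oplus\sym^3(\sigma)\otimes\omega_\sigma$ (Clebsch-Gordan with $i=1$, $m=5$) and the assumed irreducibility of $\sym^4(\sigma)$, apply Property~\ref{item:app:3.2} with $\tau=\rho_i$ inside $\rho=\sigma$, $\sigma_{\text{there}}=\sym^4(\sigma)$: then $\sym^4(\sigma)$ embeds in $\sigma^\vee\otimes\rho_i$, so $5=\dim\sym^4(\sigma)\le 2\dim\rho_i$, giving $\dim\rho_i\ge 3$ for both $i$, contradicting $\dim\rho_1+\dim\rho_2=6$ unless $\dim\rho_1=\dim\rho_2=3$; this $3+3$ split requires a further argument (e.g. analyzing $\det$ of the pieces, or noting $\sym^3(\sigma)\otimes\omega_\sigma$ being a $4$-dimensional irreducible summand of $\sigma\otimes\sym^4(\sigma)$ forces $\sigma\otimes\rho_i$ to contain a $4$-dimensional irreducible, impossible for $\dim(\sigma\otimes\rho_i)=6$ unless it also contains a $2$-dimensional piece, which one then matches against $\sigma$ itself) — this last step is the genuine obstacle and may require invoking the $n=2,3,4$ criteria from \S~\ref{sec:Galois:crit:II} or a direct image-theoretic argument. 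Once $M=4$ is excluded and all $\sym^n$ irreducibility is shown to follow from the four base cases, Lemma~\ref{lem:redsymm:geqn} finishes: a finite $M$ must be one of $1,2,3,5$.
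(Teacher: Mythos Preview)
Your induction step in part (a) does not go through. Knowing that the lower Clebsch--Gordan summands $\sym^{k-1}(\sigma)\otimes\omega_\sigma$ and $\sym^{k-3}(\sigma)\otimes\omega_\sigma^2$ are irreducible says nothing about the top summand $\sym^{k+1}(\sigma)$: the decomposition $\sym^2(\sigma)\otimes\sym^{k-1}(\sigma)\cong\sym^{k+1}(\sigma)\oplus\cdots$ is perfectly compatible with $\sym^{k+1}(\sigma)$ being reducible. The only constraint your ``propagation mechanism'' yields is $3\dim\rho_i\ge k$ for each irreducible piece $\rho_i$ of $\sym^{k+1}(\sigma)$, and for $k\ge 6$ this allows a split into two or three pieces without contradiction. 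So the route ``$\sym^6$ irreducible $\Rightarrow$ all $\sym^n$ irreducible, hence $M\le 5$'' cannot be established by this dimension count alone; in the paper that implication is Theorem~\ref{thm:higher}, and it is \emph{deduced from} Lemma~\ref{lem:red:symm}, not used to prove it.

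The idea you are missing is the one that also resolves your $3+3$ impasse in part (b), and in fact handles the whole lemma in one stroke. The paper pairs with $\sigma$ rather than $\sym^2(\sigma)$: writing $\sym^{M+1}(\sigma)=\Pi_1\oplus\Pi_2$ (the split into two near-equal pieces forced by $2\dim\Pi\ge M+1$), one has $\sigma\otimes\Pi_i^\vee\supset\sym^M(\sigma^\vee)$, and the \emph{complement} is a character $\mu$ when $M$ is even, a $2$-dimensional $\tau$ when $M$ is odd. Applying \ref{item:app:3.2} \emph{a second time} to this small complement gives $\Pi_i\hookrightarrow\sigma\otimes\mu^{-1}$ (even case) or $\Pi_i\hookrightarrow\sigma\otimes\tau^\vee$ (odd case), hence $\dim\Pi_i\le 2$ or $\dim\Pi_i\le 4$ respectively. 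Combined with $\dim\Pi_i=(M+2)/2$ or $(M+3)/2$, this forces $M=2$ (even) or $M\le 5$ (odd). In particular the $3+3$ split at $M=4$ dies immediately: the character $\mu$ in $\sigma^\vee\otimes\rho_i$ would force the $3$-dimensional irreducible $\rho_i$ to embed in the $2$-dimensional $\sigma\otimes\mu^{-1}$. No input from \S\ref{sec:Galois:crit:II} or image-theoretic considerations is needed.
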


\begin{proof}
If $\Pi$ is an irreducible subrepresentation of $\sym^{M+1}(\sigma)$ then, by Clebsch-Gordan, it is a subrepresentation of $\sigma\otimes\sym^M(\sigma)$. Since $\sym^M(\sigma)$ is irreducible, then $\sym^M(\sigma^\vee)$ is a subrepresentation of $\sigma\otimes\Pi^\vee$, by \ref{item:app:3.2}. In particular, we have that
\[ \dim(\sigma\otimes\Pi^{\vee})=2\dim \Pi\geq M+1. \]
This last inequality, together with $\dim \sym^{M+1}(\sigma)=M+2$, forces a decomposition into irreducible representations
\begin{equation*}
    \sym^{M+1}(\sigma)=\Pi_1\oplus \Pi_2 \text{ for } M>1,
\end{equation*}
where 
\[ \dim \Pi_2-\dim\Pi_1 =  
\left\{ \begin{array}{rl} 	
   1	& \text{if } M \text{ is odd,} \\
   0	& \text{if } M \text{ is even.}
\end{array} \right.  \]
    
If $M$ is odd, then $\dim(\sigma\otimes\Pi_2^{\vee})=M+3$. By the semisimplicity of $\sigma\otimes \Pi_2^\vee$, we have that
\[ \sigma\otimes\Pi_2^\vee = \sym^M(\sigma^\vee)\oplus\tau \]
for some 2-dimensional representation $\tau$. Now, since $\Pi_2$ is irreducible and $\tau$ is a subrepresentation of $\sigma\otimes\Pi_2^\vee$, then $\Pi_2$ is a subrepresentation of $\sigma\otimes\tau^\vee$, again by \ref{item:app:3.2}. This implies in particular that
\[ \dim  \Pi_2=\frac{M+3}{2}\leq 4 \ \Longrightarrow \ M\leq 5. \] 

If  $M$  is even, then $\dim (\sigma\otimes \Pi_1^{\vee})=\dim (\sigma\otimes\Pi_2^{\vee})=M+2$. In this case, there exists a character $\mu$ such that 
\[ \sigma\otimes\Pi_1^\vee = \sym^M(\sigma^\vee)\oplus\mu. \]
Applying once again \ref{item:app:3.2}, we deduce that $\sigma^\vee$ is a subrepresentation of $\Pi_1^\vee\otimes\mu^{-1}=(\Pi_1\otimes\mu)^\vee$. Since $\Pi_1\otimes\mu$ is irreducible, then $\sigma\cong \Pi_1\otimes\mu$; in particular, $\dim \Pi_1=2$ and then we necessarily have $M=2$ in this case. 
\end{proof}

Notice that in the proof of Lemma \ref{lem:red:symm} we also allow the case of $M=1$, which is an actual possibility as we will see in Theorem \ref{thm:sym2}. In fact, at the end of our study we will be able to conclude the same for $n=2$, $3$ and $5$.

\section{Reducibility Criteria for Symmetric Powers of \\ Galois Representations: ${\sym^n}(\sigma)$, $n < 6$} \label{sec:Galois:crit:II}

The cases treated in this section are well established over number fields. We are particularly influenced by \cite{GeJa1978, Ra2000, Ra2009} for the symmetric square, \cite{KiSh2002, KiSh2002cusp, Ki2003} for the symmetric cube and fourth, and \cite{Ra2009} for the symmetric fifth. In contrast to the literature, we work with $\ell$-adic instead of complex representations, and tackle the case of a function field $F$ of characteristic $p$, with $\ell \neq p$. Specifically, all of the representations that we will encounter in this section and the next are over the field $\overline{\mathbb{Q}}_\ell$.

\subsection{Symmetric Square}\label{sec:sym2}

In this case, the following theorem and its corollary present the reducibility criterion.

\begin{thm}\label{thm:sym2}
The following are equivalent
\begin{enumerate}
    \item[\namedlabel{item:square:i}{(i)}] $\sym^2(\sigma)$ is reducible. 
    \item[\namedlabel{item:square:ii}{(ii)}] $\sigma$ is dihedral. 
    \item[\namedlabel{item:square:iii}{(iii)}] $\sigma\cong \sigma\otimes\chi$ for some non-trivial character $\chi$. 
\end{enumerate}
\end{thm}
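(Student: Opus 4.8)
The plan is to prove the three equivalences in a cycle, say \ref{item:square:iii} $\Rightarrow$ \ref{item:square:i} $\Rightarrow$ \ref{item:square:ii} $\Rightarrow$ \ref{item:square:iii}, exploiting the Clebsch-Gordan decomposition
\[
\sigma \otimes \sigma \cong \sym^2(\sigma) \oplus \omega_\sigma
\]
from \S~\ref{clebshgordan} (the case $m=2$, $i=1$), together with the elementary dictionary between $\sym^2(\sigma)\otimes\omega_\sigma^{-1} = {\rm Ad}(\sigma)$ and the projective image $J$. The key observation throughout is that ${\rm Ad}(\sigma)$ depends only on ${\rm proj}\cdot\sigma$, so reducibility of $\sym^2(\sigma)$ is a statement about $J$, and that ${\rm Hom}_{G_F}(\sigma,\sigma\otimes\chi) \cong {\rm Hom}_{G_F}(\mathbbm{1}, \sigma^\vee\otimes\sigma\otimes\chi)$ via \ref{eq:prel:Hom_iso}, while $\sigma^\vee \otimes \sigma \cong {\rm Ad}(\sigma)\oplus \mathbbm{1}$.

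For \ref{item:square:iii} $\Rightarrow$ \ref{item:square:i}: if $\sigma \cong \sigma\otimes\chi$ with $\chi$ nontrivial, then taking determinants gives $\chi^2 = \mathbbm{1}$, so $\chi$ is quadratic; tensoring the isomorphism with $\sigma^\vee$ and using $\sigma^\vee\otimes\sigma \cong {\rm Ad}(\sigma)\oplus\mathbbm{1}$ shows ${\rm Ad}(\sigma) \cong {\rm Ad}(\sigma)\otimes\chi$ and, more to the point, $\chi$ occurs in $\sigma^\vee\otimes\sigma$, hence in ${\rm Ad}(\sigma)$ (as $\chi\neq\mathbbm{1}$). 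Thus ${\rm Ad}(\sigma)$, and therefore $\sym^2(\sigma) = {\rm Ad}(\sigma)\otimes\omega_\sigma$, is reducible. For \ref{item:square:i} $\Rightarrow$ \ref{item:square:ii}: suppose $\sym^2(\sigma)$ is reducible. Since $\sigma$ is irreducible and $2$-dimensional, $\sym^2(\sigma)$ is $3$-dimensional and semisimple, so it contains a character, i.e.\ ${\rm Ad}(\sigma)$ contains a character $\chi$. By \ref{item:app:3.2} applied to $\chi \hookrightarrow \sigma\otimes\sigma^\vee$, we get $\sigma \hookrightarrow \sigma\otimes\chi$, hence $\sigma\cong\sigma\otimes\chi$ by irreducibility, and as above $\chi$ is a nontrivial quadratic character. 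Then $H = \ker\chi$ is an open subgroup of index $2$ with $\omega_\sigma|_H$ controlling things: on $H$, the relation $\sigma\cong\sigma\otimes\chi$ becomes trivial, and a standard Mackey/Frobenius argument shows $\sigma \cong \mathrm{Ind}_H^{G_F}\mu$ for a character $\mu$ of $H$ (indeed $\sigma_H$ must be reducible — if it were irreducible, $\sigma \cong \sigma\otimes\chi$ would force $\chi$ to be trivial on... — and one checks it splits as a sum of two characters), so $\sigma$ is dihedral; equivalently the image of $H$ in ${\rm PGL}_2(\overline{\mathbb{Q}}_\ell)$ is abelian of index $2$ in $J$, forcing $J$ to be a (possibly infinite) dihedral-type group. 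Finally \ref{item:square:ii} $\Rightarrow$ \ref{item:square:iii} is essentially the definition of dihedral: if $\sigma \cong \mathrm{Ind}_H^{G_F}\mu$ with $[G_F:H]=2$, let $\chi$ be the nontrivial character of $G_F/H$; then $\mathrm{Ind}_H^{G_F}\mu \otimes \chi \cong \mathrm{Ind}_H^{G_F}(\mu \otimes \chi|_H) = \mathrm{Ind}_H^{G_F}\mu$ since $\chi|_H$ is trivial, giving $\sigma \cong \sigma\otimes\chi$.

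The main obstacle I anticipate is making the step \ref{item:square:i} $\Rightarrow$ \ref{item:square:ii} fully rigorous in the $\ell$-adic setting: extracting from "$\sigma\cong\sigma\otimes\chi$ for a nontrivial quadratic $\chi$" the conclusion that $\sigma$ is genuinely dihedral in the sense of \S~\ref{clasification:repsGL2}, i.e.\ that $J$ is a dihedral \emph{finite} group. The cleanest route is to argue that $\sigma_H$ is reducible — since $\chi|_H$ is trivial, any eigenspace decomposition is preserved — hence $\sigma_H \cong \mu_1 \oplus \mu_2$ for $\ell$-adic characters $\mu_i$ (irreducibility of $\sigma$ forcing $\mu_1\neq\mu_2$), and then invoke the Henniart-Lemaire observations cited in the introduction: $\mu_1,\mu_2$ have open kernels, so $\sigma_H$ does, so $\sigma$ has open kernel, so $J$ is finite, and it contains an abelian subgroup of index $2$ without being abelian (the latter by irreducibility of $\sigma$), hence $J$ is dihedral. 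I would state this last passage carefully, perhaps isolating the claim "$\sigma\otimes\chi\cong\sigma$ with $\chi$ nontrivial quadratic implies $\sigma$ dihedral" and noting where the $\ell$-adic open-kernel input of Henniart-Lemaire enters, since this is exactly the point where the function-field argument diverges from the classical complex one.
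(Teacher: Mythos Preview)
Your cycle is essentially correct and close to the paper's argument, with one logical gap and one methodological difference worth noting.

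The gap is in \ref{item:square:ii} $\Rightarrow$ \ref{item:square:iii}: you take ``dihedral'' to mean induced from an index-$2$ subgroup, but the paper's definition (\S~\ref{clasification:repsGL2}) is that $J = {\rm proj}(\sigma(G_F))$ is a \emph{finite dihedral group}. To close the cycle you must show that finite dihedral $J$ implies $\sigma$ is induced: the paper does this by noting that such a $J$ has a cyclic index-$2$ subgroup, so its preimage $H$ in $G_F$ has $\sigma(H)$ abelian modulo scalars, whence $\sigma_H$ is reducible (Schur) and $\sigma \cong {\rm Ind}_H^{G_F}\mu$ by Frobenius reciprocity. As written, your argument establishes \ref{item:square:i} $\Leftrightarrow$ \ref{item:square:iii} and \ref{item:square:i} $\Rightarrow$ \ref{item:square:ii}, but not \ref{item:square:ii} $\Rightarrow$ (anything). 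A smaller omission: in \ref{item:square:i} $\Rightarrow$ \ref{item:square:ii} you assert $\chi$ is nontrivial without justification; this needs the observation that $\mathbbm{1}$ appears in $\sigma^\vee\otimes\sigma = {\rm Ad}(\sigma)\oplus\mathbbm{1}$ with multiplicity one, so the character lying inside ${\rm Ad}(\sigma)$ cannot be trivial --- the paper addresses this explicitly.

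The methodological difference: your \ref{item:square:iii} $\Rightarrow$ \ref{item:square:i} (embedding $\chi$ into ${\rm Ad}(\sigma)$ via \ref{eq:prel:Hom_iso}) is cleaner than the paper's route, which instead proves \ref{item:square:ii}$+$\ref{item:square:iii} $\Rightarrow$ \ref{item:square:i} by writing $\sigma$ as an explicit matrix-valued induced representation and computing $\sym^2(\sigma)$ entry by entry; the payoff of the paper's computation is the explicit decomposition $\sym^2(\sigma) \cong \omega_\sigma\chi \oplus {\rm Ind}_E^F\mu^2$. Your handling of the $\ell$-adic finiteness of $J$ via open kernels of the $\mu_i$ on the Weil group is correct and rests on the same input (Lemme~IV.2.7 of \cite{HeLe2011}) as the paper; the paper packages it slightly differently, passing through Class Field Theory to reduce to the image of $\mu/\mu^c$ on the compact group of norm-one id\`ele classes, but the substance is the same.
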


\begin{proof}
Let us suppose first that $\sigma$ is dihedral, meaning that ${\rm proj}(\sigma(G_F))$ is finite and dihedral. Since the dihedral groups have an index two subgroup which is cyclic, we have that $\sigma(H)$ is cyclic modulo the center of $\sigma(G_F)$, for some index two subgroup $H$ of $G_F$. In particular, $\sigma(H)$ is abelian. By Schur's lemma, $\sigma_H$ is reducible. Note that $\sigma$ being irreducible, the representation $\sigma_H$ is semisimple. Actually, if $c\in G_F\setminus H$,  then $\sigma_H\cong \mu\oplus \mu^c$, for some character $\mu$ of $H$, where 
\begin{equation*}
	\mu^c(h):= \mu(chc^{-1}), \quad h\in H.
\end{equation*}
Let $E/F$ be the quadratic extension associated to $H$, so that $H = G_E$.

Now, by Frobenius reciprocity 
\begin{equation*}
	{\rm Hom}_{G_E}(\sigma_E,\mu) = {\rm Hom}_{G_F}(\sigma, {\rm Ind}_E^F\mu).
\end{equation*}
Since $\sigma$ is irreducible, the above being not zero implies that $\sigma\cong {\rm Ind}_E^F\mu$. 
If $\chi$ is the only non-trivial character of $G_F$ which is trivial on $G_E$, then 
\begin{equation*}
    \sigma\otimes \chi \cong {\rm{Ind}}_E^F\mu \otimes \chi\cong {\rm{Ind}}_E^F(\mu\otimes \chi_E)={\rm{Ind}}_E^F(\mu)\cong \sigma,
\end{equation*}
and \ref{item:square:iii} is satisfied. Now suppose that \ref{item:square:iii} holds. By comparing determinants, we have that $\chi^2=1$. Then the kernel of $\chi$ is $G_E$, for some quadratic extension $E/F$. If $\varphi\colon \sigma \to \sigma\otimes \chi$  is an isomorphism,  then $\varphi$ is not a scalar, since $\chi$ is not trivial. Thus, by taking restriction to $G_E$ we get a non-scalar isomorphism
\begin{equation*}
    \varphi_E\colon \sigma_E\longrightarrow \sigma_E.
\end{equation*}
By Schur's lemma, we have that $\sigma_E$ is reducible. As above, we have that $\sigma_E\cong \mu \oplus \mu^c$ for some character $\mu$ of $G_E$, where $c$ is the non-trivial element of $G_F/G_E\cong{\rm{Gal}}(E/F)$. In order to prove that $\sigma$ is dihedral, it is enough to prove that ${\rm proj}(\sigma(G_F))$ is finite, since the dihedral groups are the only finite non-abelian subgroups of ${\rm PGL}_2(\overline{\mathbb{Q}}_\ell)$ with index two abelian subgroups. For this, it suffices to verify that 
\begin{equation}\label{eq:img:mu:W_E}
\left\lbrace \frac{\mu(g)}{\mu^c(g)} : g\in \mathcal{W}_E\right\rbrace    
\end{equation}
is finite, where $\mathcal{W}_E$ is the Weil group of $E$, since $G_E$ is of finite index in $G$ and $\mathcal{W}_E$ is dense in $G_E$. We observe that $\sigma$ being unramified almost everywhere, the character $\mu$ is unramified at almost every place of $E$. Thus, $\mu$ is an $\ell$-adic character of $G_E$ and, in particular, the restriction $\mu\vert_{\mathcal{W}_E}$ is an $\ell$-adic character of $\mathcal{W}_E$. Now, via Class Field Theory, \eqref{eq:img:mu:W_E} is equal to the image of $\mu$ restricted to 
\begin{equation}
    \left\lbrace \frac{g}{c(g)} : g\in E^\times \backslash\mathbb{A}_E^\times \right\rbrace = \left\lbrace g\in E^\times \backslash\mathbb{A}_E^\times : N_{E/F}(g)=1 \right\rbrace,
\end{equation}
where $N_{E/F}\colon E^\times \backslash\mathbb{A}_E^\times\to F^\times \backslash\mathbb{A}_F^\times$ is the norm map. The latter group being compact, the finitness of its image under $\mu$ follows from the fact that $\mu\vert_{\mathcal{W}_E}$ has open kernel (see Lemme IV.2.7 of \cite{HeLe2011}).

Now let us prove that (the equivalent) \ref{item:square:ii} and \ref{item:square:iii} imply \ref{item:square:i}. Let $E/F$ be a quadratic extension and $\mu$ a character of $G_E$ such that $\sigma={\rm{Ind}}_E^F\mu$. In this case, for a suitable basis of $V$, $\sigma$ is as follows  
\begin{equation}\label{eq:sigma:dihedral}
    \sigma\colon g\longmapsto \left\lbrace \begin{array}{ll}
      \left( \begin{array}{cc}
         \mu(g)   & 0 \\
        0    & \mu(cgc^{-1})
       \end{array}\right)  & \mbox{if } g\in G_E, \\
        & \\
        \left( \begin{array}{cc}
         0 & \mu(cg) \\
         \mu(gc^{-1}) & 0
       \end{array}\right)  & \mbox{if } g\not\in G_E.
    \end{array} \right.
\end{equation}

From which we have that 
\begin{equation*}
    \sym^2(\sigma)\colon g\longmapsto \left\lbrace \begin{array}{ll}
      \left( \begin{array}{ccc}
         \mu^2(g)   & 0 & 0\\
        0    & \mu(g)\mu(cgc^{-1}) & 0 \\
        0 & 0 & \mu^2(cgc^{-1})
       \end{array}\right)  & \mbox{if } g\in G_E, \\
        & \\
        \left( \begin{array}{ccc}
         0 & 0 & \mu^2(cg) \\
         0 & \mu(cg)\mu(gc^{-1}) & 0\\ 
         \mu^2(gc^{-1}) & 0 & 0
       \end{array}\right)  & \mbox{if } g\not\in G_E.
    \end{array} \right.
\end{equation*}
From the above we readily see that $\sym^2(\sigma)$ is reducible. Actually, it has the one-dimensional subrepresentation given by 
\begin{equation*}
    \eta\colon g\longmapsto \left\lbrace \begin{array}{ll}
      \mu(g)\mu(cgc^{-1})=\det(\sigma(g)) & \mbox{if } g\in G_E, \\
        & \\
        \mu(cg)\mu(gc^{-1})=-\det (\sigma(g))  & \mbox{if } g\not\in G_E.
    \end{array} \right.
\end{equation*}
We observe that $\eta= \omega_\sigma \chi$, where $\omega_\sigma=\det\circ\ \sigma$ and $\chi$ is the quadratic character associated to $E$. Here we obtain
\begin{equation}\label{eq:sym2:dec}
    \sym^2(\sigma)\cong \omega_\sigma \chi \oplus {\rm{Ind}}_E^F \mu^2.
\end{equation}

Finally, let us suppose that \ref{item:square:i} holds. Since $\sym^2(\sigma)$ is semisimple and 3 -dimensional, it must have a one-dimensional summand, say $\eta$. By Clebsch-Gordan 
\begin{equation}\label{eq:sigma_x_sigma}
    \sigma\otimes \sigma = \sym^2(\sigma) \oplus \omega_\sigma, 
\end{equation}
we have that $\eta$ is a subrepresentation of $\sigma\otimes\sigma$. By \ref{item:app:3.2}, this implies that $\sigma^\vee$ is a subrepresentation of $\sigma\otimes\eta^{-1}$. Then, by irreducibility, we get
\begin{equation*}
    \sigma\otimes \eta^{-1}\cong \sigma^{\vee}\cong \sigma\otimes \omega_\sigma^{-1}, 
\end{equation*}
from which we infer that $\sigma\cong \sigma\otimes \eta \omega_\sigma^{-1}$. We observe that $\eta\not= \omega_\sigma$, since by \eqref{eq:prel:Hom_iso} we have 
\[
{\rm Hom}_G(\omega_\sigma, \sigma\otimes\sigma)\cong {\rm Hom}_G(\sigma^\vee\otimes\omega_\sigma, \sigma)\cong {\rm Hom}_G(\sigma,\sigma),
\] 
and the latter is one-dimensional. Thus, by letting $\chi=\eta \omega_\sigma^{-1}$, we have that \ref{item:square:iii} holds. 

\end{proof}

The following result is a general observation, beginning at $n=2$ because of Theorem \ref{thm:sym2} and is valid thereafter due to Lemma \ref{lem:redsymm:geqn}. In terms of the maximal irreducible symmetric power, this happens exactly when $M=1$.

\begin{cor}\label{cor:sym:dihedral:reducible}
The representation $\sigma$ is dihedral if and only if $\sym^n(\sigma)$ is reducible for $n \geq 2$.
\end{cor}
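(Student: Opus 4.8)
The plan is to obtain this as an immediate consequence of Theorem \ref{thm:sym2} together with the propagation Lemma \ref{lem:redsymm:geqn}, so that essentially no new argument is needed. I read the statement ``$\sym^n(\sigma)$ is reducible for $n \geq 2$'' as ``$\sym^n(\sigma)$ is reducible for every integer $n \geq 2$'', and I would prove the two implications separately.

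For the forward direction, assume $\sigma$ is dihedral. By the equivalence \ref{item:square:ii}$\Leftrightarrow$\ref{item:square:i} of Theorem \ref{thm:sym2}, $\sym^2(\sigma)$ is reducible. Then I would apply Lemma \ref{lem:redsymm:geqn} with $N = 2$ to conclude that $\sym^n(\sigma)$ is reducible for all $n \geq 2$. For the converse, if $\sym^n(\sigma)$ is reducible for all $n \geq 2$, then in particular $\sym^2(\sigma)$ is reducible, and the implication \ref{item:square:i}$\Rightarrow$\ref{item:square:ii} of Theorem \ref{thm:sym2} gives that $\sigma$ is dihedral.

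The only point worth flagging — and it is not really an obstacle — is the asymmetry in the quantifier: the forward direction genuinely uses Lemma \ref{lem:redsymm:geqn} to pass from reducibility at $n = 2$ to reducibility at every larger $n$, whereas the converse consumes only the $n = 2$ instance. Everything else is immediate. In terms of the maximal irreducible symmetric power, this corollary is exactly the assertion that the case $M = 1$ characterizes dihedral $\sigma$, which explains the particular phrasing.
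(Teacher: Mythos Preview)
Your proposal is correct and matches the paper's own argument exactly: the paper also derives the corollary from Theorem~\ref{thm:sym2} for the base case $n=2$ and Lemma~\ref{lem:redsymm:geqn} for the propagation to all $n\geq 2$, and likewise notes that this amounts to the characterization $M=1$.
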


\subsection{Symmetric Cube}\label{sec:cube}

Given that $\sym^3(\sigma)$ is reducible when $\sigma$ is dihedral by Corollary \ref{cor:sym:dihedral:reducible}, we complete the reducibility criterion in this case by restricting ourselves to non-dihedral representations in the following theorem.

\begin{thm}\label{thm:A3}
Suppose that $\sigma$ is non-dihedral. Then $\sym^3(\sigma)$ is reducible if and only if there exists some non-trivial character $\mu$ such that
\[ \sym^2(\sigma)\cong \sym^2(\sigma)\otimes \mu; \]
the condition being equivalent to $\sigma$ being tetrahedral. When this is the case we have the decomposition
\begin{equation*}\label{eq:dec:A3}
    A^3(\sigma)\cong \sigma\otimes\mu \oplus \sigma\otimes\mu^2.
\end{equation*}
\end{thm}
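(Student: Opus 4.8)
The plan is to handle the three implications in a cycle, mirroring the structure of the symmetric square theorem. First I would prove that if $\sigma$ is tetrahedral then $\sym^2(\sigma) \cong \sym^2(\sigma) \otimes \mu$ for a nontrivial cubic character $\mu$. Recall $\sym^2(\sigma)$ factors through $\mathrm{Ad}(\sigma)$ up to twist, so the image of $\mathrm{Ad}(\sigma)$ in $\mathrm{PGL}_3$ is governed by $J = \mathrm{proj}(\sigma(G_F))$, which for tetrahedral $\sigma$ is $A_4$. The group $A_4$ has a normal subgroup $V_4$ with cyclic quotient $\mathbb{Z}/3$; letting $H'$ be the preimage of $V_4$ in $G_F$, we get an index-three normal subgroup, and $\sigma_{H'}$ has projective image $V_4$, hence is dihedral by Theorem \ref{thm:sym2} (its projective image is a non-abelian — wait, $V_4$ is abelian, so actually $\sigma_{H'}$ is reducible, or after a further index-two descent it becomes $\mu_1 \oplus \mu_2$). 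The point is that $\mu := $ the cubic character of $G_F/H'$ satisfies $\sym^2(\sigma) \cong \sym^2(\sigma) \otimes \mu$: one checks this by comparing the two semisimple representations place by place, or more cleanly by noting $\mathrm{Hom}_{G_F}(\sym^2\sigma, \sym^2\sigma \otimes \mu) \supseteq \mathrm{Hom}_{H'}(\sym^2\sigma_{H'}, \sym^2\sigma_{H'})$ together with a dimension/invariance count forced by the $V_4$-structure. I would make the cubic order of $\mu$ precise by comparing determinants: $\det \sym^2(\sigma) = \omega_\sigma^3$ is fixed, and the isomorphism forces $\mu^3 = 1$.

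Next, assuming $\sym^2(\sigma) \cong \sym^2(\sigma) \otimes \mu$ for nontrivial $\mu$, I would derive the decomposition and reducibility of $\sym^3(\sigma)$. Comparing determinants gives $\mu^3 = 1$, so $\mu$ is cubic and $H' = \ker \mu$ cuts out a cyclic cubic extension. Let $\varphi \colon \sym^2(\sigma) \to \sym^2(\sigma) \otimes \mu$ be an isomorphism. I would argue $\sigma_{H'}$ is reducible: if it were irreducible, then $\sym^2(\sigma_{H'})$ would be irreducible and $\varphi_{H'}$ a scalar (Schur), contradicting that $\varphi$ permutes the three $\mu$-eigenspace-data nontrivially (since $\mu \neq 1$, $\varphi$ is genuinely non-scalar on $\sym^2\sigma$, and this non-scalarness must be detected on a finite-index subgroup; here I would invoke that $\sigma$ is non-dihedral to rule out the degenerate intermediate case and use the $\ell$-adic open-kernel argument from the proof of Theorem \ref{thm:sym2} via Lemme IV.2.7 of \cite{HeLe2011} to see $J$ is finite). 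Once $\sigma_{H'}$ is reducible — in fact $\sigma_{H'} \cong \mathrm{Ind}$ of a character from a further quadratic layer, so $\sigma_{H'}$ is dihedral — the projective image $J$ contains an index-three dihedral subgroup, forcing $J = A_4$, i.e. $\sigma$ tetrahedral. For the explicit decomposition: by Clebsch-Gordan $\sigma \otimes \sym^2(\sigma) = \sym^3(\sigma) \oplus \sigma \otimes \omega_\sigma$, and twisting through by powers of $\mu$ together with $\sym^2\sigma \cong \sym^2\sigma \otimes \mu \cong \sym^2\sigma \otimes \mu^2$ forces $\sym^3(\sigma)$ to absorb the three twists $\sigma \otimes \mu^i$; matching dimensions ($4 = 2 + 2$) and using \ref{item:app:3.2} to locate $\sigma \otimes \mu$ and $\sigma \otimes \mu^2$ as subrepresentations of $\sigma \otimes \sym^2(\sigma)$ (hence of $\sym^3(\sigma)$, as they are not $\cong \sigma \otimes \omega_\sigma$ when $\mu \neq 1$) yields $A^3(\sigma) = \sym^3(\sigma) \otimes \omega_\sigma^{-1} \cong \sigma \otimes \mu \oplus \sigma \otimes \mu^2$.

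Finally, assuming only that $\sym^3(\sigma)$ is reducible with $\sigma$ non-dihedral, I would recover the twisting relation. Since $\sym^3(\sigma)$ is semisimple and $4$-dimensional, and since $\dim(\sigma \otimes \Pi^\vee) = 2\dim\Pi \geq \dim\sym^3\sigma \cdot (\text{...})$ — more precisely, running the argument of Lemma \ref{lem:red:symm} with $M \geq 2$ — a $1$-dimensional summand is impossible (that would force $\sigma \cong \sigma \otimes \eta$, making $\sigma$ dihedral by Theorem \ref{thm:sym2}), so $\sym^3(\sigma) = \rho_1 \oplus \rho_2$ with both $\rho_i$ irreducible of dimension $2$. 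Using Clebsch-Gordan $\sigma \otimes \sym^2(\sigma) = \sym^3(\sigma) \oplus \sigma \otimes \omega_\sigma$ and \ref{item:app:3.2}, each $\rho_i$ being a subrepresentation of $\sigma \otimes \sym^2(\sigma)$ gives $\sym^2(\sigma)$ a subrepresentation inside $\sigma^\vee \otimes \rho_i = \sigma \otimes \omega_\sigma^{-1} \otimes \rho_i$; a dimension count ($2 \cdot 2 = 4 \geq 3 = \dim \sym^2\sigma$) forces $\sym^2(\sigma)$ to be a direct summand of $\sigma \otimes \omega_\sigma^{-1} \otimes \rho_i$ with a $1$-dimensional complement $\nu_i$, so $\sigma \otimes \omega_\sigma^{-1} \otimes \rho_i \cong \sym^2(\sigma) \oplus \nu_i$. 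Playing the two relations ($i=1,2$) against each other and against $\sigma \otimes \sym^2\sigma$ yields $\rho_2 \cong \rho_1 \otimes \mu$ for a character $\mu$ with $\mu \neq 1$ (else $\sym^3\sigma$ would not be multiplicity-free, contradicting that $\mathrm{Hom}(\sigma \otimes \omega_\sigma, \sigma\otimes\sym^2\sigma)$ is one-dimensional as in the $\sym^2$ proof), and then $\sym^3(\sigma) \cong \rho_1 \oplus \rho_1 \otimes \mu$; tensoring the identity $\sigma \otimes \sym^2\sigma \cong \sym^3\sigma \oplus \sigma\otimes\omega_\sigma$ by $\mu$ and comparing with the untwisted version forces $\sym^2(\sigma) \cong \sym^2(\sigma) \otimes \mu$, and $\det$ gives $\mu^3 = 1$.

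The main obstacle I anticipate is the $\ell$-adic subtlety in the middle implication: deducing that $J$ is \emph{finite} (not merely that $\sigma$ becomes reducible over a cyclic cubic extension) requires, exactly as in Theorem \ref{thm:sym2}, passing to the Weil group and invoking that an $\ell$-adic character of $\mathcal{W}_{E}$ has open kernel (Lemme IV.2.7 of \cite{HeLe2011}) to bound the image of $\mu_1/\mu_1^{c}$ on the norm-one idèles; bookkeeping the tower $G_F \supset H' \supset H$ of indices $3$ and $2$ and checking the projective image at each stage is genuinely dihedral (using non-dihedrality of $\sigma$ at the top to exclude $J$ being itself dihedral of order $6$) is where care is needed.
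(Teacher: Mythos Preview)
There is a genuine gap in your middle implication, and the same confusion recurs in your first paragraph: you repeatedly slide from ``$\sym^2(\sigma_{H'})$ is reducible'' to ``$\sigma_{H'}$ is reducible''. These are not the same, and the latter is in fact impossible here. Since $[G_F:H']=3$ is coprime to $\dim\sigma=2$, Clifford theory forces $\sigma_{H'}$ to remain irreducible. Your Schur argument shows only that $\sym^2(\sigma_{H'})$ is reducible: the non-scalar $\varphi$ restricts to a non-scalar $H'$-endomorphism of $\sym^2(\sigma_{H'})$, so $\sym^2(\sigma_{H'})$ cannot be irreducible --- but the implication ``$\sigma_{H'}$ irreducible $\Rightarrow\sym^2(\sigma_{H'})$ irreducible'' that you invoke is simply false. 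The paper's route is to apply Theorem~\ref{thm:sym2} to the \emph{irreducible} representation $\sigma_{H'}$ and conclude that $\sigma_{H'}$ is dihedral (here $V_4=D_2$ is counted as dihedral, which resolves your ``wait, $V_4$ is abelian'' worry: an irreducible $2$-dimensional representation can have projective image $V_4$, e.g.\ via the quaternion group). Finiteness of ${\rm proj}(\sigma(H'))$, and hence of $J$, is then inherited from the proof of Theorem~\ref{thm:sym2} carried out over the cubic extension; that is where the Henniart--Lemaire open-kernel lemma enters, not at the level of $\sigma_{H'}$ itself.

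Your third paragraph (reducible $\Rightarrow$ twisting) is essentially the paper's argument, with one detour you can shortcut. Having written $\sigma^\vee\otimes\rho_i\cong\sym^2(\sigma)\oplus\nu_i$, apply \ref{item:app:3.2} once more to get $\rho_i\cong\sigma\otimes\nu_i$ directly; then $\sigma^\vee\otimes\rho_i\cong\sym^2(\sigma)\otimes\nu_i\omega_\sigma^{-1}\oplus\nu_i$ and comparison yields $\sym^2(\sigma)\cong\sym^2(\sigma)\otimes\nu_i\omega_\sigma^{-1}$ without ``playing the two relations against each other''. The paper then obtains $\mu_1=\mu_2^{-1}=\mu_2^2$ by taking contragredients of the decomposition and using non-dihedrality, which gives $A^3(\sigma)\cong\sigma\otimes\mu\oplus\sigma\otimes\mu^2$ cleanly. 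Note also that the paper does not prove ``tetrahedral $\Rightarrow$ twisting'' as a separate step; it extracts ``tetrahedral'' as a consequence of ``reducible'', so your first paragraph is attempting something the paper does not need.
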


\begin{proof}
Let us suppose first that $\sym^2(\sigma)\cong \sym^2(\sigma)\otimes \mu$ for some non-trivial character $\mu$. By Clebsch-Gordan 
\begin{equation}\label{eq:dec:sym2}
    \sigma\otimes \sym^2(\sigma)\cong \sym^3(\sigma) \oplus \sigma\otimes \omega_\sigma.
\end{equation}
Now, $\sigma\otimes \sym^2(\sigma)\cong\sigma\otimes \sym^2(\sigma)\otimes \mu$ by hypothesis, so the right-hand side of \eqref{eq:dec:sym2} is equivalent to its twist by $\mu$, then
\begin{equation*}
    \sym^3(\sigma) \oplus \sigma\otimes \omega_\sigma\cong (\sym^3(\sigma) \oplus \sigma\otimes \omega_\sigma)\otimes \mu\cong \sym^3(\sigma)\otimes \mu \oplus \sigma\otimes \omega_\sigma\mu.
\end{equation*}
Since $\sigma$ is non-dihedral and $\mu$ is non-trivial, by Theorem \ref{thm:sym2} we have that $\sigma\otimes \omega_\sigma\not\cong\sigma\otimes \omega_\sigma\mu$. Thus, $\sigma\otimes \omega_\sigma\mu$ must be a factor of $\sym^3(\sigma)$. Hence, $\sym^3(\sigma)$ is reducible. 

Now let us assume that $\sym^3(\sigma)$ is reducible. Let $\tau$ be a subrepresentation of $\sym^3(\sigma)$. By \eqref{eq:dec:sym2}, $\tau$ is a subrepresentation of $\sigma\otimes\sym^2(\sigma)$. Since $\sym^2(\sigma)$ is irreducible, then it is a subrepresentation of $\sigma^\vee\otimes\tau$. In particular, $\tau$ is not one-dimensional. Thus, we have that 
\begin{equation*}
    \sym^3(\sigma)=\tau_1\oplus \tau_2,
\end{equation*}
for some irreducbile two-dimensional representations $\tau_1$ and $\tau_2$. Since $\sigma^\vee\otimes\tau_1$ is semisimple 4-dimensional, and $\sym^2(\sigma)$ is irreducible 3-dimensional, then there exists a character $\eta_1$ such that
\begin{equation}\label{eq:sigma:tau_1}
\sigma^\vee\otimes\tau_1=\sym^2(\sigma)\oplus\eta_1.
\end{equation}
This implies that $\sigma$ is a subrepresentation of $\tau_1\otimes\eta_1^{-1}$ and then $\sigma\cong \tau_1\otimes \eta_1^{-1}$ by irreducibility, i.e., $\tau_1\cong \sigma\otimes \eta_1$.  

Similarly, we have that $\tau_2\cong\sigma\otimes\eta_2$ for some character $\eta_2$. We obtain then 
\begin{equation}\label{eq:dec2:sym3}
    \sym^3(\sigma)\cong\sigma\otimes\eta_1 \oplus \sigma\otimes \eta_2.
\end{equation}
Now, by the identity $\tau_1\cong\sigma\otimes\eta_1$ we get 
\begin{equation*}
\sigma^\vee\otimes\tau_1\cong\sigma^\vee\otimes\sigma\otimes\eta_1\cong \sym^2(\sigma)\otimes\eta_1\omega_\sigma^{-1}\oplus \eta_1.
\end{equation*}
Thus, by \eqref{eq:sigma:tau_1} we obtain 
\[
\sym^2(\sigma)\cong \sym^2(\sigma)\otimes \eta_1\omega_\sigma^{-1}.
\]
We observe that neither $\eta_1$ nor $\eta_2$ can be equal to $\omega_\sigma$, since, by using repeatedly \eqref{eq:prel:Hom_iso}, we have
\[
{\rm Hom}_G(\sigma\otimes\omega_\sigma, \sigma\otimes\sym^2(\sigma))\cong {\rm Hom}_G(\sym^2(\sigma^\vee),\sigma^\vee\otimes\sigma^\vee),
\]
and the latter must be one-dimensional by the irreducibility of $\sym^2(\sigma)$ and dimension reasons. Also, similarly, ${\rm Hom}_G(\sigma\otimes\eta_i, \sigma\otimes \sym^2(\sigma))$ is one-dimensional, from which we deduce $\eta_1\not= \eta_2$. Thus, if we let $\mu_i=\eta_i \omega_\sigma^{-1}$ for $i=1,2$, then $\mu_1\not=\mu_2$ are non-trivial characters such that
\begin{equation}\label{eq:sym2:xmu_1}
	\sym^2(\sigma)\cong \sym^2(\sigma)\otimes \mu_1\cong\sym^2(\sigma)\otimes \mu_2
\end{equation}
and
\begin{equation}\label{eq:dec2:A3}
	A^3(\sigma)\cong \sigma\otimes \mu_1 \oplus \sigma\otimes \mu_2.
\end{equation}
The first part of the theorem is proved, since $\mu_1$ and $\mu_2$ are not trivial.

We observe that \eqref{eq:sym2:xmu_1} implies in particular that $\mu_1$ has order three. If $E/F$ is the cubic extension which corresponds to $\mu_1$, then similarly as in \S\,\ref{sec:sym2} above, \eqref{eq:sym2:xmu_1} implies that $\sym^2(\sigma_E)$ is reducible. Note that since $E/F$ is cyclic of order three and $\sigma$ is two-dimensional irreducible, then $\sigma_E$ is irreducible. By Theorem \ref{thm:sym2}, $\sigma_E$ is dihedral. In particular, ${\rm proj}(\sigma(G_F))$ is finite. It must be tetrahedral, since $A_4$ is the unique non-abelian finite subgroup of ${\rm PGL}_2(\overline{\mathbb{Q}}_\ell)$ with a dihedral index three subgroup.  

Now, by taking contragradient of both sides of \eqref{eq:dec2:A3}, we have that 
\begin{equation*}
    A^3(\sigma)\otimes \omega_\sigma^{-1}\cong \sigma\otimes \omega_\sigma^{-1}\mu_1^{-1}\oplus \sigma\otimes \omega_\sigma^{-1}\mu_2^{-1},
\end{equation*}
i.e., 
\begin{equation*}
    A^3(\sigma)\cong \sigma\otimes\mu_1\oplus \sigma\otimes\mu_2\cong \sigma\otimes \mu_1^{-1}\oplus \sigma\otimes \mu_2^{-1}.
\end{equation*}
If $\sigma\otimes\mu_1\cong \sigma\otimes \mu_1^{-1}$, then $\sigma\otimes\mu_1^2\cong\sigma$, which is not possible since $\mu_1^2$ is non-trivial and $\sigma$ is non-dihedral. Then, necessarily $\sigma\otimes\mu_1\cong \sigma\otimes\mu_2^{-1}$. The latter implies that $\mu_1=\mu_2^{-1}$, since $\sigma$ is non-dihedral. If we take, for instance, $\mu=\mu_2$, then $A^2(\sigma)\cong A^2(\sigma)\otimes \mu$ and, since $\mu_1=\mu_2^{-1}=\mu_2^2$, \eqref{eq:dec2:A3} becomes 
\begin{equation*}
    A^3(\sigma)\cong \sigma\otimes\mu \oplus \sigma\otimes\mu^2.
\end{equation*}
Theorem \ref{thm:A3} is now completed. 
\end{proof}

Similar to Corollary \ref{cor:sym:dihedral:reducible}, the following result is a consequence of Theorem \ref{thm:A3} and Lemma \ref{lem:redsymm:geqn}, together with the definition of $M$.

\begin{cor}\label{cor:sym:tetrahedral:reducible}
The representation $\sigma$ is tetrahedral if and only if $M=2$.
\end{cor}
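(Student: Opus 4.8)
The plan is to deduce the equivalence from Theorem \ref{thm:A3}, Corollary \ref{cor:sym:dihedral:reducible}, Lemma \ref{lem:redsymm:geqn} and the very definition of $M$ as the maximal irreducible symmetric power. First I would prove the forward direction: suppose $\sigma$ is tetrahedral. Then $\sigma$ is in particular non-dihedral, since the projective image $J$ is isomorphic to $A_4$, which is not dihedral; hence by Theorem \ref{thm:sym2} the representation $\sym^2(\sigma)$ is irreducible, so $M \geq 2$. To see that $M = 2$ it suffices to check that $\sym^3(\sigma)$ is reducible, because then Lemma \ref{lem:redsymm:geqn} gives reducibility of $\sym^n(\sigma)$ for all $n \geq 3$, forcing $M = 2$. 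Reducibility of $\sym^3(\sigma)$ for tetrahedral $\sigma$ is exactly the content of Theorem \ref{thm:A3} in the direction ``$\sigma$ tetrahedral $\Rightarrow \sym^3(\sigma)$ reducible'' (equivalently, one exhibits the order-three character $\mu$ with $\sym^2(\sigma) \cong \sym^2(\sigma)\otimes\mu$ coming from the normal Klein four-subgroup of $A_4$ with cyclic quotient of order three).

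For the converse, suppose $M = 2$. By definition of $M$, the symmetric square $\sym^2(\sigma)$ is irreducible while $\sym^3(\sigma)$ is reducible. Irreducibility of $\sym^2(\sigma)$ together with Theorem \ref{thm:sym2} shows that $\sigma$ is non-dihedral. We may therefore apply Theorem \ref{thm:A3}: since $\sigma$ is non-dihedral and $\sym^3(\sigma)$ is reducible, the theorem tells us precisely that $\sigma$ is tetrahedral. This closes the loop.

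I expect no serious obstacle here, as the corollary is a bookkeeping consequence of the preceding results; the only point requiring a word of care is the logical interplay between ``$M = 2$'' and the two-part statement of Theorem \ref{thm:A3}. Concretely, one must be careful to invoke Corollary \ref{cor:sym:dihedral:reducible} (or Theorem \ref{thm:sym2}) to rule out $M = 1$ in the forward direction — otherwise a tetrahedral $\sigma$ could a priori have $M = 1$ — and symmetrically, in the converse, to note that the hypothesis $M = 2$ simultaneously supplies the non-dihedrality needed to enter Theorem \ref{thm:A3} and the reducibility of $\sym^3(\sigma)$ that the theorem converts into the tetrahedral conclusion. Once these are spelled out, the proof is a short paragraph.
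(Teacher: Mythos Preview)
Your proposal is correct and follows essentially the same approach as the paper, which simply records the corollary as a consequence of Theorem~\ref{thm:A3}, Lemma~\ref{lem:redsymm:geqn}, and the definition of $M$. You have just spelled out in full the two directions that the paper leaves implicit, including the appeal to Theorem~\ref{thm:sym2} (or Corollary~\ref{cor:sym:dihedral:reducible}) to separate the tetrahedral and dihedral cases.
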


\subsection{Fourth and Fifth Symmetric Powers}\label{sec:fourth}

If $\sym^3(\sigma)$ is reducible then, by Lemma \ref{lem:redsymm:geqn}, so is $\sym^4(\sigma)$. To complete the criterion it suffices to inspect the case when the symmetric cube is irreducible in the next theorem.  

\begin{thm}\label{thm:sym4}
Assume that $\sym^3(\sigma)$ is irreducible. Then $\sym^4(\sigma)$ is reducible if and only if there exists a non-trivial quadratic character $\chi$ such that
\[ \sym^3(\sigma)\cong \sym^3(\sigma)\otimes \chi; \]
the condition being equivalent to $\sigma$ being octahedral. When this is the case we have the decomposition 
\begin{equation*}
    A^4(\sigma)\cong {\rm{Ind}}_E^F(\omega_{\sigma_E}\mu) \oplus \sym^2(\sigma)\otimes\chi,
\end{equation*}
where $E/F$ is the quadratic extension corresponding to $\chi$ by class field theory and $\mu$ is some cubic character of $G_E$.
\end{thm}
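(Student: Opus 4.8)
The plan is to follow the template of the proof of Theorem~\ref{thm:A3}, using the Clebsch--Gordan identity $\sigma\otimes\sym^3(\sigma)\cong\sym^4(\sigma)\oplus\sym^2(\sigma)\otimes\omega_\sigma$, property~\ref{item:app:3.2}, the symmetric-square analysis of \S\ref{sec:sym2} (in particular \eqref{eq:sym2:dec}), and the tetrahedral case of Theorem~\ref{thm:A3}. Note first that $\sym^2(\sigma)$ is irreducible by Lemma~\ref{lem:redsymm:geqn}, so $\sigma$ is non-dihedral and $\sym^2(\sigma)$ is not fixed by any non-trivial quadratic character (the proof of Theorem~\ref{thm:A3} shows such a fixing character has order three). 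The easy implication is then immediate: if $\sym^3(\sigma)\cong\sym^3(\sigma)\otimes\chi$ for a non-trivial quadratic $\chi$, then $\sigma\otimes\sym^3(\sigma)$ is $\chi$-invariant, so the Clebsch--Gordan identity forces $\sym^2(\sigma)\otimes\omega_\sigma\chi$ — which is not isomorphic to $\sym^2(\sigma)\otimes\omega_\sigma$ — to occur inside $\sym^4(\sigma)$; thus $\sym^4(\sigma)\cong\sym^2(\sigma)\otimes\omega_\sigma\chi\oplus\tau_1$ for some two-dimensional $\tau_1$, and $\sym^4(\sigma)$ is reducible.

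Conversely, assume $\sym^4(\sigma)$ is reducible. Every irreducible constituent $\Pi$ embeds into $\sigma\otimes\sym^3(\sigma)$, whence $\sym^3(\sigma)\hookrightarrow\sigma^\vee\otimes\Pi$ by~\ref{item:app:3.2}, so $\dim\Pi\geq 2$; since $\dim\sym^4(\sigma)=5$ we get $\sym^4(\sigma)\cong\tau_1\oplus\tau_2$ with $\tau_i$ irreducible of dimensions $2$ and $3$. Applying the same reasoning to $\tau_1$ gives $\sym^3(\sigma)\hookrightarrow\sigma^\vee\otimes\tau_1$, and by semisimplicity (Chevalley) and dimension count $\sigma^\vee\otimes\tau_1\cong\sym^3(\sigma)$. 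Now compute $\wedge^2\sym^3(\sigma)$ in two ways: an eigenvalue computation gives $\wedge^2\sym^3(\sigma)\cong\omega_\sigma\otimes\sym^4(\sigma)\oplus\omega_\sigma^3$, while the ${\rm GL}_2\times{\rm GL}_2$ decomposition of $\wedge^2$ of a tensor product, together with $\sym^3(\sigma)\cong\sigma^\vee\otimes\tau_1$, gives
\[ \wedge^2\sym^3(\sigma)\cong\det(\tau_1)\otimes\sym^2(\sigma)\otimes\omega_\sigma^{-2}\ \oplus\ \omega_\sigma^{-1}\otimes\sym^2(\tau_1). \]
Comparing one-dimensional constituents in the resulting identity is the key point: the left side has exactly one (namely $\omega_\sigma^3$), while the right side has none unless $\sym^2(\tau_1)$ is reducible; hence $\tau_1$ is dihedral. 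Writing $\tau_1\cong{\rm Ind}_E^F\mu_1$ for a quadratic extension $E/F$ with associated quadratic character $\chi$, and splitting $\sym^2(\tau_1)\cong\omega_{\tau_1}\chi\oplus{\rm Ind}_E^F\mu_1^2$ via \eqref{eq:sym2:dec}, a comparison of the remaining constituents yields $\det(\tau_1)=\omega_\sigma^4\chi$, $\tau_2\cong\sym^2(\sigma)\otimes\omega_\sigma\chi$, and — after a short manipulation with the two induced descriptions of $\tau_1$ — $\mu_1=\omega_\sigma^2|_E\cdot\mu$ for a non-trivial cubic character $\mu$ of $G_E$, so that $\tau_1\otimes\omega_\sigma^{-1}\cong{\rm Ind}_E^F(\omega_{\sigma_E}\mu)$; this is exactly the asserted decomposition of $A^4(\sigma)$. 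Finally, from $\sym^3(\sigma)\cong\sigma^\vee\otimes\tau_1$ we get ${\rm End}(\sym^3(\sigma))\cong{\rm End}(\sigma)\otimes{\rm End}(\tau_1)\supseteq{\rm Ad}(\tau_1)\supseteq\chi$, so ${\rm Hom}_{G_F}(\sym^3(\sigma)\otimes\chi,\sym^3(\sigma))\neq 0$, an isomorphism since $\sym^3(\sigma)$ is irreducible; hence $\sym^3(\sigma)\cong\sym^3(\sigma)\otimes\chi$.

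It remains to identify the twist condition with $\sigma$ being octahedral. Given $\sym^3(\sigma)\cong\sym^3(\sigma)\otimes\chi$ with $E=\ker\chi$, the isomorphism restricts to a non-scalar $G_E$-endomorphism of $\sym^3(\sigma_E)$, so $\sym^3(\sigma_E)$ is reducible; also $\sigma_E$ is irreducible, since otherwise $\sigma\cong{\rm Ind}_E^F(\text{character})$ and $\sym^2(\sigma)$ would be reducible by \eqref{eq:sym2:dec}. By Theorem~\ref{thm:A3} and Corollaries~\ref{cor:sym:dihedral:reducible}--\ref{cor:sym:tetrahedral:reducible}, $\sigma_E$ is dihedral or tetrahedral; the dihedral case is excluded because a finite subgroup of ${\rm PGL}_2(\overline{\mathbb Q}_\ell)$ admitting a dihedral subgroup of index $\le 2$ is itself dihedral, which would make $\sigma$ dihedral. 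Thus $\sigma_E$ is tetrahedral: ${\rm proj}(\sigma(G_E))\cong A_4$ is normal of index $\leq 2$ in $J={\rm proj}(\sigma(G_F))$; index one would make $\sigma$ tetrahedral (hence $\sym^3(\sigma)$ reducible), so $|J|=24$ and by the classification of finite subgroups of ${\rm PGL}_2$ we have $J\cong S_4$, i.e.\ $\sigma$ is octahedral. Conversely, if $\sigma$ is octahedral, let $\chi$ be the pullback of the sign character of $J\cong S_4$ and $E=\ker\chi$, so $\sigma_E$ is tetrahedral and Theorem~\ref{thm:A3} supplies a cubic $\mu$ on $G_E$ with $\sym^2(\sigma_E)\cong\sym^2(\sigma_E)\otimes\mu$; since ${\rm Ad}(\sigma)$ is a $G_F$-representation one deduces $\mu^c=\mu^{-1}$ (the alternative $\mu^c=\mu$ would force $\mu$ to extend to $G_F$ and, via \eqref{eq:sym2:dec}-type arguments, would contradict irreducibility of $\sym^3(\sigma)$). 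The projection formula turns the description of $\sym^3(\sigma_E)$ from Theorem~\ref{thm:A3} into ${\rm Ind}_E^F\sym^3(\sigma_E)\cong 2\,(\sigma\otimes\omega_\sigma\otimes{\rm Ind}_E^F\mu)$ with the inner representation irreducible, while Mackey gives ${\rm Ind}_E^F\sym^3(\sigma_E)\cong\sym^3(\sigma)\oplus\sym^3(\sigma)\otimes\chi$; comparing the two forces $\sym^3(\sigma)\cong\sym^3(\sigma)\otimes\chi$, completing the equivalences.

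The main obstacle is the double computation of $\wedge^2\sym^3(\sigma)$ and, above all, the realization that merely counting one-dimensional constituents in the resulting identity is what pins down $\tau_1$ as dihedral; once that is in hand, the exact shape of the decomposition and the passage to the octahedral group $S_4$ are routine bookkeeping with induced characters, \eqref{eq:sym2:dec}, and the classification of finite subgroups of ${\rm PGL}_2$.
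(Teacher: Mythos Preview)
Your proof is correct and takes a genuinely different route from the paper in the hard direction. Where the paper works with the three-dimensional piece $\Pi$ of $A^4(\sigma)$ and, via repeated applications of~\ref{item:app:3.2}, manufactures an auxiliary $\tau_1\cong\sigma\otimes\chi$, deduces $\Pi\cong\sym^2(\sigma)\otimes\chi$, and then shows $\chi$ is quadratic by a contragredient comparison before restricting to $G_E$ and invoking the tetrahedral case, you instead start from the two-dimensional piece $\tau_1$, use the identity $\sym^3(\sigma)\cong\sigma^\vee\otimes\tau_1$, and compute $\wedge^2\sym^3(\sigma)$ in two ways; the mismatch of one-dimensional constituents forces $\tau_1$ to be dihedral, so the quadratic character $\chi$ and the induced description of $\tau_1$ fall out of~\eqref{eq:sym2:dec} directly. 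Your approach reaches the explicit decomposition of $A^4(\sigma)$ more quickly and makes the implication ``$\sigma$ octahedral $\Rightarrow$ twist condition'' explicit via the projection formula and Mackey (the paper leaves this direction essentially implicit); the paper's argument, by contrast, is more elementary in that it never leaves the Clebsch--Gordan/\ref{item:app:3.2} toolkit already used for $\sym^2$ and $\sym^3$, avoiding any exterior-square identities. One minor remark: your exclusion of the case $\mu^c=\mu$ is correct, but the operative input is Theorem~\ref{thm:A3} (an extension $\tilde\mu$ of $\mu$ to $G_F$ would satisfy $\sym^2(\sigma)\cong\sym^2(\sigma)\otimes\tilde\mu$ or $\sym^2(\sigma)\otimes\tilde\mu\chi$, forcing $\sym^3(\sigma)$ reducible), rather than~\eqref{eq:sym2:dec}.
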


\begin{proof}
First, if $\sym^3(\sigma)\cong\sym^3(\sigma)\otimes\chi$, then clearly $A^3(\sigma)\cong A^3(\sigma)\otimes\chi$. By Clebsch-Gordan 
\begin{equation}\label{eq:sigma_x_A3}
    \sigma\otimes A^3(\sigma)\cong A^4(\sigma) \oplus \sym^2(\sigma).
\end{equation}
Then we have 
\begin{equation*}
    A^4(\sigma)\oplus \sym^2(\sigma)\cong A^4(\sigma)\otimes\chi \oplus \sym^2(\sigma)\otimes\chi.
\end{equation*}
Since $\sym^2(\sigma)\otimes\chi\not\cong \sym^2(\sigma)$, we have that $\sym^2(\sigma)\otimes\chi$ is a factor of $A^4(\sigma)$. In particular, $\sym^4(\sigma)$ is reducible.

Let us suppose now that $\sym^4(\sigma)$, and therefore $A^4(\sigma)$, is reducible. Since $A^3(\sigma)$ is assumed to be irreducible, then for every subrepresentation $\tau$ of $\sigma\otimes A^3(\sigma)$, $A^3(\sigma)$ is a subrepresentation of $\sigma^\vee\otimes\tau$. In particular, $\sigma\otimes A^3(\sigma)$ cannot have one-dimensional subrepresentations. Thus, from \eqref{eq:sigma_x_A3} we get that 
\begin{equation}\label{eq:A4:tau+Pi}
    A^4(\sigma)\cong \tau \oplus \Pi,
\end{equation}
for some irreducible representations $\tau$ and $\Pi$ of dimensions 2 and 3, respectively. By comparing dimensions, we deduce that there exists a two-dimensional (necessarily irreducible) representation $\tau_1$ such that 
\begin{equation*}
   \sigma^\vee \otimes\Pi \cong A^3(\sigma)\oplus \tau_1.
\end{equation*}
By \ref{item:app:3.2}, we have that $\Pi$ is a subrepresentation of $\sigma\otimes\tau_1$. By comparing dimensions once again, we see that there exists a character $\eta$ such that
\begin{equation}\label{eq:sigmaxtau1}
\sigma\otimes\tau_1\cong \Pi \oplus \eta.
\end{equation}
Therefore, we must have that $\tau_1\otimes\eta^{-1}\cong \sigma^\vee$, i.e., $\tau_1\cong \sigma\otimes\chi$, where $\chi=\eta \omega_\sigma^{-1}$.
 
 Now, we have
\begin{equation*}
    \sigma\otimes\tau_1\cong \sigma\otimes(\sigma\otimes\chi)\cong\sym^2(\sigma)\otimes\chi \oplus \omega_\sigma\chi.
\end{equation*}
Thus, by \eqref{eq:sigmaxtau1} we conclude that 
\begin{equation}\label{eq:Pi:Sym2_chi}
    \Pi\cong \sym^2(\sigma)\otimes\chi.
\end{equation}
Then, by \eqref{eq:sigma_x_A3}, \eqref{eq:A4:tau+Pi} and \eqref{eq:Pi:Sym2_chi} above, we obtain 
\begin{equation}\label{eq:sigma_x_A3:tau}
    \sigma\otimes A^3(\sigma)\cong \tau \oplus \sym^2(\sigma)\otimes\chi \oplus \sym^2(\sigma).
\end{equation}
We see from \eqref{eq:sigma_x_A3:tau} that $\chi$ is not trivial, since we are assuming that $A^3(\sigma)$ is irreducible and thus 
\[
{\rm Hom}_G(\sym^2(\sigma),\sigma\otimes A^3(\sigma))\cong {\rm Hom}_G(A^3(\sigma)^\vee, \sigma\otimes\sym^2(\sigma^\vee))
\]
is one-dimensional. By taking contragradient of both sides of \eqref{eq:sigma_x_A3:tau}, we have that 
\begin{equation*}
    \sigma\otimes A^3(\sigma)\otimes \omega_\sigma^{-2}\cong \tau\otimes \omega_{\tau}^{-1}\oplus \sym^{2}(\sigma)\otimes \omega_\sigma^{-2}\chi^{-1}\oplus \sym^2(\sigma)\otimes \omega_\sigma^{-2}.
\end{equation*}
Hence, we obtain 
\begin{equation*}
    \tau \oplus \sym^2(\sigma)\otimes\chi \oplus \sym^2(\sigma) \cong \tau\otimes \omega_{\tau}^{-1}\omega_\sigma^2\oplus \sym^{2}(\sigma)\otimes \chi^{-1}\oplus \sym^2(\sigma).
\end{equation*}
In particular, we have that $\sym^2(\sigma)\otimes\chi\cong \sym^2(\sigma)\otimes \chi^{-1}$. This implies that $\chi$ is a non-trivial quadratic character by Theorem \ref{thm:A3}, since we are assuming that $A^3(\sigma)$ is irreducible. 

Let $E/F$ be the quadratic extension obtained from $\chi$ via class field theory. By restricting to $G_E$, we have from \eqref{eq:sigma_x_A3:tau} that
\begin{equation}\label{eq:sigma_x_A3:E}
    \sigma_E\otimes A^3(\sigma_E)\cong \tau_E\oplus \sym^2(\sigma_E)\oplus \sym(\sigma_E).
\end{equation}
This implies that $A^3(\sigma_E)$ is reducible. In fact, as we have observed, otherwise ${\rm Hom}_G(\sym^2(\sigma_E),\sigma_E\otimes A^3(\sigma_E))$ would be one-dimensional.
\noindent

The representation $\sigma$ being non-dihedral, $\sigma_E$ is irreducible. Thus, by \S\S \ref{sec:sym2}-\ref{sec:cube}, we have that ${\rm proj}(\sigma(G_E))$ is finite. Since $E/F$ is finite, ${\rm proj}(\sigma(G_F))$ is finite. Now, the representation $\sigma_E$ cannot be dihedral, since none of the groups $A_4$, $S_4$ and $A_5$ has an index-two subgroup which is dihedral. Actually, we necessarily have that $\sigma$ is octahedral and $\sigma_E$ is tetrahedral. We then can apply Theorem \ref{thm:A3}. In particular, we have that 
\begin{equation}\label{eq:dec:A3:E}
    A^3(\sigma_E)\cong \sigma_E\otimes\mu \oplus \sigma_E\otimes\mu^2,
\end{equation}
for some character $\mu$ of $G_E$. Let $s$ be the non-trivial element of $G_F/G_E\cong {\rm{Gal}}(E/F)$. Clearly, $s$ acts trivially on $A^3(\sigma_E)$, and cannot act trivially on its factors $\sigma_E\otimes\mu$ and $\sigma_E\otimes\mu^2$, since in this case $A^3(\sigma)$ would be reducible. Then, we have that $(\sigma_E\otimes\mu)^s=\sigma_E\otimes\mu^2$, i.e., $\mu^s=\mu^2$. This implies that $A^3(\sigma)\cong {{\rm{Ind}}}_E^F(\sigma_E\otimes\mu)$. Since $E/F$ is the extension defined by $\chi$, we obtain
\begin{equation*}
    A^3(\sigma)\cong A^3(\sigma)\otimes\chi.
\end{equation*}
Now, from \eqref{eq:dec:A3:E} and the fact that $\sym^2(\sigma_E)\cong\sym^2(\sigma_E)\otimes\mu$ by Theorem \ref{thm:A3} we have that
\begin{equation*}
 \begin{split}
    \sigma_E\otimes A^3(\sigma_E) & \cong \left(\sigma_E\otimes \sigma_E\otimes\mu \right)\oplus \left(\sigma_E\otimes \sigma_E\otimes\mu^2\right)\\
    & \cong \left(\sym^2(\sigma_E)\oplus \omega_{\sigma_E}\mu \right)\oplus \left(\sym^2(\sigma_E)\oplus \omega_{\sigma_E}\mu^2\right).
\end{split}
\end{equation*}
By replacing the above in \eqref{eq:sigma_x_A3:E}, we have 
\begin{equation*}
    \tau_E\cong \omega_{\sigma_E}\mu \oplus \omega_{\sigma_E}\mu^2.
\end{equation*}
Therefore, $\tau\cong{{\rm{Ind}}}_E^F(\omega_{\sigma_E}\mu)$ and we obtain the decomposition
\begin{equation*}
    A^4(\sigma)\cong {{\rm{Ind}}}_E^F (\omega_{\sigma_E}\mu) \oplus \sym^2(\sigma)\otimes\chi.
\end{equation*}
We are done. 
\end{proof}

Analogous to the dihedral and tetrahedral cases, Corollaries \ref{cor:sym:dihedral:reducible} and \ref{cor:sym:tetrahedral:reducible} respectively, for octahedral representations we have the following result.

\begin{cor}\label{cor:sym:octahedral:reducible}
The representation $\sigma$ is octahedral if and only if $M=3$.
\end{cor}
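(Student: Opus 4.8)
The statement to prove is Corollary~\ref{cor:sym:octahedral:reducible}: $\sigma$ is octahedral if and only if $M = 3$, where $M$ is the maximal irreducible symmetric power. The plan is to mirror the proofs of Corollaries~\ref{cor:sym:dihedral:reducible} and~\ref{cor:sym:tetrahedral:reducible}, combining Theorem~\ref{thm:sym4} with Lemma~\ref{lem:redsymm:geqn} and the definition of $M$.

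First I would establish the forward direction. Suppose $\sigma$ is octahedral, meaning $J = {\rm proj}(\sigma(G_F)) \cong S_4$. Since $S_4$ is neither abelian, dihedral, nor $A_4$, Theorem~\ref{thm:sym2} gives that $\sym^2(\sigma)$ is irreducible and Theorem~\ref{thm:A3} gives that $\sym^3(\sigma)$ is irreducible (in both cases, the relevant self-twisting isomorphisms are ruled out since $J$ is not dihedral, respectively not tetrahedral). On the other hand, $S_4$ has a normal subgroup $V_4$ of index $6$, or more directly has $A_4$ as an index-two subgroup, so there is a quadratic character $\chi$ with $\ker\chi = G_E$ such that $\sigma_E$ is tetrahedral; then by Theorem~\ref{thm:A3} applied over $E$ and a descent argument (exactly as performed inside the proof of Theorem~\ref{thm:sym4}), $\sym^3(\sigma) \cong \sym^3(\sigma)\otimes\chi$, so Theorem~\ref{thm:sym4} yields that $\sym^4(\sigma)$ is reducible. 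Combined with the irreducibility of $\sym^3(\sigma)$, this says exactly $M = 3$.

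For the converse, suppose $M = 3$. Then by definition $\sym^3(\sigma)$ is irreducible while $\sym^4(\sigma)$ is reducible. Since $\sym^2(\sigma)$ is a subrepresentation of $\sym^3(\sigma)\otimes\sigma^\vee$-type constructions — more simply, by Lemma~\ref{lem:redsymm:geqn} in contrapositive form, the irreducibility of $\sym^3(\sigma)$ forces the irreducibility of $\sym^2(\sigma)$ — so by Theorem~\ref{thm:sym2} the representation $\sigma$ is non-dihedral, and by Theorem~\ref{thm:A3} the irreducibility of $\sym^3(\sigma)$ together with non-dihedrality forces $\sigma$ non-tetrahedral. Now Theorem~\ref{thm:sym4} applies: $\sym^4(\sigma)$ reducible together with $\sym^3(\sigma)$ irreducible forces $\sigma$ to be octahedral. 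This completes the equivalence.

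I expect no serious obstacle here; the main point requiring care is simply bookkeeping that the hypotheses of the cited theorems are all met — in particular noting that $M = 3$ already encodes both ``$\sym^3(\sigma)$ irreducible'' (so Theorems~\ref{thm:sym2} and~\ref{thm:A3} force $\sigma$ non-dihedral and non-tetrahedral, placing us in the hypothesis of Theorem~\ref{thm:sym4}) and ``$\sym^4(\sigma)$ reducible'' (the conclusion-triggering hypothesis of Theorem~\ref{thm:sym4}). Conversely, octahedrality rules out the dihedral and tetrahedral classifications, so $\sym^2(\sigma)$ and $\sym^3(\sigma)$ are both irreducible, and the index-two tetrahedral subfield argument supplies the self-twist needed to invoke Theorem~\ref{thm:sym4} for the reducibility of $\sym^4(\sigma)$. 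The only genuinely substantive input — the descent showing $\sym^3(\sigma)\cong\sym^3(\sigma)\otimes\chi$ when $\sigma_E$ is tetrahedral — has already been carried out within the proof of Theorem~\ref{thm:sym4}, so it may simply be cited.
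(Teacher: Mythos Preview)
Your proposal is correct and follows essentially the same approach as the paper, which treats the corollary as an immediate consequence of Theorem~\ref{thm:sym4}, Lemma~\ref{lem:redsymm:geqn}, and the definition of $M$, exactly parallel to Corollaries~\ref{cor:sym:dihedral:reducible} and~\ref{cor:sym:tetrahedral:reducible}. Your descent sketch in the forward direction is more than strictly necessary, since the statement of Theorem~\ref{thm:sym4} already asserts the equivalence with ``$\sigma$ octahedral'' (so you may cite it directly once you have checked $\sym^3(\sigma)$ irreducible), but it does no harm and arguably fills in a step the paper leaves implicit.
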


Given our results thus far, the following criterion for the symmetric fifth power is quickly obtained.

\begin{thm}\label{thm:sym45}
The following are equivalent:
\begin{itemize}
    \item[(i)] $\sym^5(\sigma)$ is reducible.
    \item[(ii)] $\sym^4(\sigma)$ is reducible.
    \item[(iii)] $\sigma$ is either dihedral, tetrahedral or octahedral.
\end{itemize}
\end{thm}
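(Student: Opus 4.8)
The plan is to obtain Theorem~\ref{thm:sym45} as a purely formal consequence of the structural results already in hand: Lemmas~\ref{lem:redsymm:geqn} and~\ref{lem:red:symm} together with Corollaries~\ref{cor:sym:dihedral:reducible}, \ref{cor:sym:tetrahedral:reducible} and~\ref{cor:sym:octahedral:reducible}. No fresh Clebsch--Gordan computation is required. The one fact I would foreground at the outset is that, by those three corollaries, condition (iii) is exactly the statement $M\in\{1,2,3\}$, where $M$ is the maximal irreducible symmetric power.

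First I would dispatch (ii) $\Rightarrow$ (i): if $\sym^4(\sigma)$ is reducible, then $\sym^5(\sigma)$ is reducible by Lemma~\ref{lem:redsymm:geqn} with $N=4$. Next, (iii) $\Rightarrow$ (ii): if $\sigma$ is dihedral, tetrahedral or octahedral, then $M\in\{1,2,3\}$ by the corollaries, so $\sym^{M+1}(\sigma)$ is reducible with $M+1\le 4$, and Lemma~\ref{lem:redsymm:geqn} (now with $N=M+1$) upgrades this to reducibility of $\sym^4(\sigma)$.

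The substantive implication is (i) $\Rightarrow$ (iii). If $\sym^5(\sigma)$ is reducible, then not every symmetric power of $\sigma$ is irreducible, so $M\ne\infty$; thus $M$ is a positive integer, and moreover $M<5$, i.e. $M\le 4$. This is where Lemma~\ref{lem:red:symm} carries the whole argument: it says that for finite $M$ one has $M\in\{1,2,3,5\}$, and intersecting with $M\le 4$ forces $M\in\{1,2,3\}$. Applying the corollaries again translates this into ``$\sigma$ is dihedral, tetrahedral or octahedral'', closing the cycle (i) $\Rightarrow$ (iii) $\Rightarrow$ (ii) $\Rightarrow$ (i).

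The nearest thing to an obstacle is conceptual rather than technical: the equivalence of (i) and (ii) rests entirely on the exclusion of the value $M=4$, which is precisely the content of Lemma~\ref{lem:red:symm}; so the theorem is essentially a repackaging of that lemma. I would therefore end the proof by recording explicitly that $M$ is never equal to $4$, which is exactly what makes ``$\sym^4(\sigma)$ reducible'' and ``$\sym^5(\sigma)$ reducible'' coincide.
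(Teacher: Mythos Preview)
Your proof is correct and follows essentially the same route as the paper: the equivalence of (i) and (ii) is extracted from Lemmas~\ref{lem:redsymm:geqn} and~\ref{lem:red:symm} (the key point being that $M=4$ is excluded), and the link to (iii) comes from the classification results for $\sym^2$, $\sym^3$, $\sym^4$. The only cosmetic difference is that you cite Corollaries~\ref{cor:sym:dihedral:reducible}--\ref{cor:sym:octahedral:reducible} where the paper cites Theorems~\ref{thm:sym2}, \ref{thm:A3}, \ref{thm:sym4} directly; the content is the same.
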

\begin{proof}
The equivalence of (i) and (ii) follows from Lemmas \ref{lem:redsymm:geqn} and \ref{lem:red:symm}. And, from our prior results on $\sym^n(\sigma)$ for $n = 4,3,2$, Theorems \ref{thm:sym4}, \ref{thm:A3} and \ref{thm:sym2} respectively, we can infer that $\sym^4(\sigma)$ is reducible if and only if $\sigma$ is either dihedral, tetrahedral or octahedral.
\end{proof}

\section{Reducibility Criteria for Symmetric Powers of \\ Galois Representations: The Icosahedral Case}\label{sec:Galois:crit:III}

The main aim of this section is to establish the remaining reducibility criteria involving the symmetric sixth, proving that if $\sym^6(\sigma)$ is reducible then ${\rm proj}(\sigma(G_F))$ is finite. Of particular interest are icosahedral represenations. We keep the notation of \S\S~\ref{sec:Galois:crit:I} and \ref{sec:Galois:crit:II}, where in particular all of our representations are $\ell$-adic.

\subsection{}\label{ss:sym6:crit:I}
In analogy to the previous cases, arguing with Corollaries \ref{cor:sym:dihedral:reducible}, \ref{cor:sym:tetrahedral:reducible}, and now including Corollary \ref{cor:sym:octahedral:reducible}, it is enough to study the reducibility of $\sym^6(\sigma)$ when $\sym^4(\sigma)$ is irreducible.

\begin{thm}\label{thm:sym6:dec}
If $\sym^4(\sigma)$ is irreducible, then the following are equivalent.
\begin{enumerate}
    \item [\namedlabel{item:sym6:i}{(i)}] $\sym^6(\sigma)$ is reducible.
    \item [\namedlabel{item:sym6:ii}{(ii)}] There exists an irreducible two-dimensional representation $\sigma'$ of $G_F$ such that $\sym^5(\sigma)\cong {\rm{Ad}}(\sigma)\otimes\sigma'$.
\end{enumerate}
The representation $\sigma'$ is uniquely determined up to isomorphism by \ref{item:sym6:ii}.
\end{thm}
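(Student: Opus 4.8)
The plan is to run the same Clebsch--Gordan dimension count used for $n=2,3,4$, now with
\[ \sigma\otimes\sym^5(\sigma)\cong\sym^6(\sigma)\oplus\sym^4(\sigma)\otimes\omega_\sigma . \]
The key point is that the standing hypothesis ``$\sym^4(\sigma)$ irreducible'' is equivalent, by Theorem~\ref{thm:sym45}, to ``$\sym^5(\sigma)$ irreducible'', which is what actually gets used. Throughout I write $\mathbf{1}$ for the trivial representation and freely use $\sigma^\vee\otimes\sigma\cong{\rm Ad}(\sigma)\oplus\mathbf{1}$ together with $\sigma\otimes{\rm Ad}(\sigma)\cong A^3(\sigma)\oplus\sigma$, both immediate from \eqref{eq:sigma_x_sigma} and Clebsch--Gordan.

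For \ref{item:sym6:i}$\Rightarrow$\ref{item:sym6:ii} I would first pin down the shape of a reducible $\sym^6(\sigma)$. If $\Pi\subseteq\sym^6(\sigma)$ is irreducible, then $\Pi$ is a subrepresentation of $\sigma\otimes\sym^5(\sigma)$ by Clebsch--Gordan, and since $\sym^5(\sigma)$ is irreducible, Property~\ref{item:app:3.2} gives $\sym^5(\sigma)\hookrightarrow\sigma^\vee\otimes\Pi$, so $\dim\Pi\geq 3$; as $\sym^6(\sigma)$ is semisimple of dimension $7$, this forces $\sym^6(\sigma)\cong\Pi_3\oplus\Pi_4$ with $\Pi_j$ irreducible of dimension $j$. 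Next I would manufacture $\sigma'$ out of $\Pi_4$: from $\Pi_4\hookrightarrow\sigma\otimes\sym^5(\sigma)$ and \ref{item:app:3.2} we get $\sym^5(\sigma)\hookrightarrow\sigma^\vee\otimes\Pi_4$, so by dimension $\sigma^\vee\otimes\Pi_4\cong\sym^5(\sigma)\oplus\sigma'$ for a $2$-dimensional $\sigma'$. Applying \ref{item:app:3.2} again to $\sigma'\hookrightarrow\sigma^\vee\otimes\Pi_4$, this time with $\Pi_4$ as the irreducible factor, yields $\Pi_4\hookrightarrow\sigma\otimes\sigma'$, hence $\Pi_4\cong\sigma\otimes\sigma'$ by dimension; in particular $\sigma'$ is irreducible, since otherwise $\Pi_4$ would be reducible. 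Substituting $\Pi_4\cong\sigma\otimes\sigma'$ into $\sigma^\vee\otimes\Pi_4\cong\sym^5(\sigma)\oplus\sigma'$ and using $\sigma^\vee\otimes\sigma\cong{\rm Ad}(\sigma)\oplus\mathbf{1}$ gives ${\rm Ad}(\sigma)\otimes\sigma'\oplus\sigma'\cong\sym^5(\sigma)\oplus\sigma'$, and cancelling $\sigma'$ leaves $\sym^5(\sigma)\cong{\rm Ad}(\sigma)\otimes\sigma'$.

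For \ref{item:sym6:ii}$\Rightarrow$\ref{item:sym6:i}, tensoring $\sym^5(\sigma)\cong{\rm Ad}(\sigma)\otimes\sigma'$ by $\sigma$ and using $\sigma\otimes{\rm Ad}(\sigma)\cong A^3(\sigma)\oplus\sigma$ I obtain
\[ \sym^6(\sigma)\oplus\sym^4(\sigma)\otimes\omega_\sigma\cong A^3(\sigma)\otimes\sigma'\oplus\sigma\otimes\sigma' . \]
Were $\sym^6(\sigma)$ irreducible, the left-hand side would be a sum of exactly two irreducible constituents, of dimensions $7$ and $5$; but $\sigma\otimes\sigma'$ is a $4$-dimensional direct summand of the right-hand side, hence a sub-sum of those constituents, which is impossible because $4\notin\{0,5,7,12\}$. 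Hence $\sym^6(\sigma)$ is reducible. For uniqueness, assume \ref{item:sym6:ii} (hence, by the equivalence just shown, \ref{item:sym6:i}, so $\sym^6(\sigma)\cong\Pi_3\oplus\Pi_4$ as above); the displayed isomorphism then forces $\sigma\otimes\sigma'\cong\Pi_4$, the unique $4$-dimensional direct summand. If $\sigma''$ also satisfies \ref{item:sym6:ii}, likewise $\sigma\otimes\sigma''\cong\Pi_4\cong\sigma\otimes\sigma'$; tensoring with $\sigma^\vee$ and using ${\rm Ad}(\sigma)\otimes\sigma'\cong\sym^5(\sigma)\cong{\rm Ad}(\sigma)\otimes\sigma''$, cancellation gives $\sigma'\cong\sigma''$.

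The main obstacle is the first implication, and within it the two places where the irreducibility of $\sym^5(\sigma)$ is genuinely used: once to force a reducible $\sym^6(\sigma)$ to split \emph{precisely} as $3\oplus 4$ (ruling out one- and two-dimensional pieces), and once more to make the complement $\sigma'$ of $\sym^5(\sigma)$ inside $\sigma^\vee\otimes\Pi_4$ come out $2$-dimensional and irreducible with ${\rm Ad}(\sigma)\otimes\sigma'\cong\sym^5(\sigma)$. Everything else is dimension bookkeeping with Clebsch--Gordan and repeated use of Property~\ref{item:app:3.2}.
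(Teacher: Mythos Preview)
Your proof is correct and follows essentially the same approach as the paper: both directions rest on the Clebsch--Gordan identity $\sigma\otimes\sym^5(\sigma)\cong\sym^6(\sigma)\oplus\sym^4(\sigma)\otimes\omega_\sigma$, the irreducibility of $\sym^5(\sigma)$ (via Theorem~\ref{thm:sym45}), and repeated use of Property~\ref{item:app:3.2} with dimension counting to force $\sym^6(\sigma)\cong\Pi_3\oplus\Pi_4$ and then extract $\sigma'$ from $\sigma^\vee\otimes\Pi_4$. The only cosmetic difference is in the uniqueness step: the paper concludes by embedding $\sigma''$ into $\sigma^\vee\otimes\Pi_4\cong\sym^5(\sigma)\oplus\sigma'$ and matching dimensions, whereas you tensor $\sigma\otimes\sigma'\cong\sigma\otimes\sigma''$ by $\sigma^\vee$ and cancel; both are equally valid.
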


\begin{proof}
Let us suppose first that $\sym^5(\sigma)\cong {\rm{Ad}}(\sigma)\otimes\sigma'$ for some $\sigma'$. Then
\begin{equation}\label{eq:sigma_x_sym5:i}
    \sigma\otimes\sym^5(\sigma)\cong \sigma\otimes{\rm{Ad}}(\sigma)\otimes \sigma'\cong \left( A^3(\sigma)\otimes\sigma'\right)\oplus \left( \sigma\otimes\sigma'\right).
\end{equation}
On the other hand, we have the identity
\begin{equation}\label{eq:sigma_x_sym5:ii}
    \sigma\otimes \sym^5(\sigma)\cong \sym^6(\sigma)\oplus \sym^4(\sigma)\otimes \omega_\sigma.
\end{equation}
By comparing the decompositions \eqref{eq:sigma_x_sym5:i} and \eqref{eq:sigma_x_sym5:ii}, and noting that $\sym^4(\sigma)\otimes \omega_\sigma$ is irreducible, we see that $\sym^6(\sigma)$ is reducible.

Let us suppose now that $\sym^6(\sigma)$ is reducible. Let $\tau$ be an irreducible component of $\sym^6(\sigma)$ of dimension $r\leq 3$, which exists by the semisimplicity of $\sym^6(\sigma)$. By \eqref{eq:sigma_x_sym5:ii}, $\tau$ is a subrepresentation of $\sigma\otimes\sym^5(\sigma)$ and then $\sym^5(\sigma)$ is a subrepresentation of $\sigma^\vee\otimes \tau$, by \ref{item:app:3.2}. Since $\sym^5(\sigma)$ is $6$-dimensional, we necessarily have that $r=3$ and 
\begin{equation}\label{eq:sym^5:dec:A6}
    \sym^5(\sigma)\cong \sigma^{\vee}\otimes \tau.
\end{equation}
We observe that $\sym^6(\sigma)\cong \Pi \oplus \tau$ for some irreducible 4-dimensional representation $\Pi$, since $\sym^6(\sigma)$ cannot have irreducible summands of dimension less than 3 by the above. Now, $\sym^5(\sigma)$ is a subrepresentation of $\sigma^\vee\otimes\Pi$, and by comparing dimensions we deduce that there exists an irreducible 2-dimensional representation $\sigma'$ such that 
\begin{equation}\label{eq:sigma'}
    \sigma^\vee\otimes\Pi\cong \sym^5(\sigma)\oplus \sigma'.
\end{equation}
Using \ref{item:app:3.2} again and comparing dimensions, we obtain that 
\begin{equation*}
    \Pi\cong \sigma\otimes \sigma'.
\end{equation*}
Then
\begin{equation*}
    \sigma^{\vee}\otimes \Pi\cong \sigma^\vee\otimes(\sigma\otimes\sigma')\cong\left({\rm{Ad}}(\sigma)\otimes\sigma'\right) \oplus \sigma'.
\end{equation*}
By comparing with \eqref{eq:sigma'} we have that $\sym^5(\sigma)\cong {\rm{Ad}}(\sigma)\otimes\sigma'$, i.e., \ref{item:sym6:ii} holds.

Finally, let us see that the representation $\sigma'$ is uniquely determined by \ref{item:sym6:ii}. Let $\sigma''$ be another (necessarily irreducible) representation such that $\sym^5(\sigma)\cong {\rm Ad}(\sigma)\otimes\sigma''$. Then 
\begin{equation*}
	\sigma\otimes\sym^5(\sigma)\cong \left( A^3(\sigma)\otimes\sigma'\right)\oplus \left( \sigma\otimes\sigma'\right)\cong\left( A^3(\sigma)\otimes\sigma''\right)\oplus \left( \sigma\otimes\sigma''\right).
\end{equation*}
The above being in turn isomorphic to 
\begin{equation*}
\sym^6(\sigma)\oplus \sym^4(\sigma)\otimes \omega_\sigma\cong \Pi \oplus \tau \oplus \sym^4(\sigma)\otimes \omega_\sigma,	
\end{equation*}
by comparing dimensions, we obtain that $\Pi\cong\sigma\otimes\sigma''$. Thus, $\sigma''$ is a subrepresentation of $\sigma^\vee\otimes\Pi$. By \eqref{eq:sigma'}, we see that $\sigma''\cong\sigma'$, as wanted. 
\end{proof}

\begin{prop}\label{prop:sym6:chi}
Assume that $\sym^4(\sigma)$ is irreducible and $\sym^6(\sigma)$ is reducible. Let $\sigma'$ be the representation of Theorem \ref{thm:sym6:dec}. Then there exists a character $\chi$ such that
\begin{equation*}\label{eq:prop:sym6}
\sym^5(\sigma)\cong {\rm Ad}(\sigma')\otimes\sigma\otimes\chi. 	
\end{equation*}
In this case, we have the decomposition 
\begin{equation*}\label{eq:prop:sym6:dec}
    \sym^6(\sigma)\cong \sigma\otimes\sigma' \oplus {\rm{Ad}}(\sigma')\otimes \chi \omega_\sigma.
\end{equation*}	
\end{prop}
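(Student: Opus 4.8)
The plan is to reduce everything to identifying the $3$-dimensional irreducible piece of $\sym^6(\sigma)$. Since $\sym^4(\sigma)$ is irreducible, Theorem \ref{thm:sym45} and Lemma \ref{lem:redsymm:geqn} force $\sym^5(\sigma)$, $\sym^2(\sigma)$, ${\rm Ad}(\sigma)$ and $A^3(\sigma)$ all to be irreducible. From the proof of Theorem \ref{thm:sym6:dec} I retain that $\sym^6(\sigma)\cong\Pi\oplus\tau$ with $\Pi\cong\sigma\otimes\sigma'$ an irreducible $4$-dimensional representation, $\tau$ an irreducible $3$-dimensional representation, and $\sym^5(\sigma)\cong\sigma^\vee\otimes\tau$. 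It is then enough to show that $\tau\cong{\rm Ad}(\sigma')\otimes\psi$ for some character $\psi$: setting $\chi=\psi\omega_\sigma^{-1}$, the asserted decomposition of $\sym^6(\sigma)$ is immediate, and, using $\sigma^\vee\cong\sigma\otimes\omega_\sigma^{-1}$, one gets $\sym^5(\sigma)\cong\sigma^\vee\otimes\tau\cong\sigma\otimes{\rm Ad}(\sigma')\otimes\psi\omega_\sigma^{-1}\cong{\rm Ad}(\sigma')\otimes\sigma\otimes\chi$, which is the first assertion.

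The next step is a string of Clebsch--Gordan computations built on the two descriptions $\sym^5(\sigma)\cong{\rm Ad}(\sigma)\otimes\sigma'\cong\sigma^\vee\otimes\tau$. Expanding $\sigma\otimes\sym^5(\sigma)$ in both ways, using $\sigma\otimes{\rm Ad}(\sigma)\cong A^3(\sigma)\oplus\sigma$ and $\sigma\otimes\sigma^\vee\cong{\rm Ad}(\sigma)\oplus\mathbf 1$ together with $\sigma\otimes\sym^5(\sigma)\cong\sym^6(\sigma)\oplus\sym^4(\sigma)\otimes\omega_\sigma$ and $\sym^6(\sigma)\cong\Pi\oplus\tau$, and cancelling $\Pi$, one obtains $A^3(\sigma)\otimes\sigma'\cong\tau\oplus\big(\sym^4(\sigma)\otimes\omega_\sigma\big)$ and likewise ${\rm Ad}(\sigma)\otimes\tau\cong(\sigma\otimes\sigma')\oplus\big(\sym^4(\sigma)\otimes\omega_\sigma\big)$. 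From $\tau\hookrightarrow A^3(\sigma)\otimes\sigma'$ and Property \ref{item:app:3.2} (applied with $A^3(\sigma)$ in the role of the irreducible factor) we get $A^3(\sigma)\hookrightarrow\sigma'^\vee\otimes\tau$, so by dimension $\sigma'^\vee\otimes\tau\cong A^3(\sigma)\oplus\rho$ for a $2$-dimensional $\rho$; moreover $\sigma'^\vee\otimes\tau$ can contain no character, since a character $\mu\subseteq\sigma'^\vee\otimes\tau$ would, via \ref{item:app:3.2}, force $\tau\hookrightarrow\sigma'\otimes\mu$, which is impossible on dimension grounds, so $\rho$ is irreducible. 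Tensoring $\sigma'^\vee\otimes\tau\cong A^3(\sigma)\oplus\rho$ by $\sigma'$, rewriting $\sigma'\otimes\sigma'\cong\sym^2(\sigma')\oplus\omega_{\sigma'}$ and $\sigma'\otimes A^3(\sigma)\cong\tau\oplus\sym^4(\sigma)\otimes\omega_\sigma$, and cancelling $\tau$, one finds ${\rm Ad}(\sigma')\otimes\tau\cong\big(\sym^4(\sigma)\otimes\omega_\sigma\big)\oplus\big(\sigma'\otimes\rho\big)$. Hence, once we know $\rho\cong\sigma'\otimes\psi'$ for a character $\psi'$, the right-hand side becomes $\big(\sym^4(\sigma)\otimes\omega_\sigma\big)\oplus\big(\sym^2(\sigma')\otimes\psi'\big)\oplus\big(\omega_{\sigma'}\psi'\big)$, so ${\rm Ad}(\sigma')\otimes\tau$ has the one-dimensional summand $\omega_{\sigma'}\psi'$; by \ref{eq:prel:Hom_iso} this says ${\rm Ad}(\sigma')\otimes\omega_{\sigma'}\psi'\cong\tau$, i.e.\ $\tau\cong{\rm Ad}(\sigma')\otimes\psi$ with $\psi=\omega_{\sigma'}\psi'$.

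It therefore remains to identify $\rho$ as a twist of $\sigma'$. Substituting $\tau\cong\big(A^3(\sigma)\otimes\sigma'\big)\ominus\big(\sym^4(\sigma)\otimes\omega_\sigma\big)$ into $\sigma'^\vee\otimes\tau$ exhibits $\rho$ as the complement of $\sigma'^\vee\otimes\sym^4(\sigma)\otimes\omega_\sigma$ inside $A^3(\sigma)\otimes{\rm Ad}(\sigma')$. One then computes $A^3(\sigma)\otimes{\rm Ad}(\sigma')$ a second time starting from $\sym^5(\sigma)\cong{\rm Ad}(\sigma)\otimes\sigma'$ — so that ${\rm Ad}(\sigma)\otimes{\rm Ad}(\sigma')\cong\big(\sym^5(\sigma)\otimes\sigma'^\vee\big)\ominus{\rm Ad}(\sigma)$ — and matches irreducible constituents. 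The matching is rigid because $\sym^4(\sigma)$, $A^3(\sigma)$, ${\rm Ad}(\sigma)$, ${\rm Ad}(\sigma')$ are irreducible, because ${\rm Ad}(\sigma')\not\cong{\rm Ad}(\sigma)$ and $\sigma'\not\cong\sigma\otimes(\text{character})$ — in either case $\sym^5(\sigma)\cong{\rm Ad}(\sigma)\otimes\sigma'$ would be reducible — and because the relevant ${\rm Hom}_G$-spaces are one-dimensional by \ref{eq:prel:Hom_iso} and the irreducibility of $\sym^2(\sigma)$, ${\rm Ad}(\sigma)$. This pins $\rho$ down to a twist of $\sigma'$; a determinant comparison then merely constrains $\chi$, and only its existence is needed.

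The main obstacle is precisely this last step: breaking the apparent symmetry between $\sigma$ and $\sigma'$. Purely formal tensor-and-dualise manipulations tend to close up into tautologies — for instance ${\rm Hom}_G({\rm Ad}(\sigma'),\sym^6(\sigma)\otimes\mu)\cong{\rm Hom}_G({\rm Ad}(\sigma'),\tau\otimes\mu)$ simply restates the goal — so one has to genuinely use that $\sym^5(\sigma)$ is at once ${\rm Ad}(\sigma)\otimes\sigma'$ and $\sigma^\vee\otimes\tau$, together with the multiplicity-one phenomena and the uniqueness of $\sigma'$ in Theorem \ref{thm:sym6:dec}, to force the $3$-dimensional summand $\tau$ to be a twist of ${\rm Ad}(\sigma')$ rather than some unrelated $3$-dimensional irreducible.
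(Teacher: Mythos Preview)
Your argument tracks the paper's proof closely through the decomposition $\sigma'^\vee\otimes\tau\cong A^3(\sigma)\oplus\rho$ with $\rho$ two-dimensional, and the overall plan---identify $\tau$ as a twist of ${\rm Ad}(\sigma')$, then read off both conclusions---is exactly right. However, you leave the crucial identification $\rho\cong\sigma'\otimes(\text{character})$ incomplete: your third paragraph sketches an elaborate constituent-matching inside $A^3(\sigma)\otimes{\rm Ad}(\sigma')$ and ${\rm Ad}(\sigma)\otimes{\rm Ad}(\sigma')$ that is never actually carried out, and your final paragraph reads as an admission that you have not found the finishing move.

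The paper's resolution at this point is a single further application of the very trick you already used to show $\rho$ is irreducible. Since $\rho$ sits inside $\sigma'^\vee\otimes\tau$ and $\tau$ is irreducible, Property~\ref{item:app:3.2} gives $\tau\hookrightarrow\sigma'\otimes\rho$. But $\sigma'\otimes\rho$ is $4$-dimensional and $\tau$ is $3$-dimensional irreducible, so the complement is a character, say $\eta$. Then $\eta\hookrightarrow\sigma'\otimes\rho$ forces, via \ref{eq:prel:Hom_iso} and the irreducibility of $\sigma'$, that $\rho\cong\sigma'^\vee\otimes\eta\cong\sigma'\otimes\mu$ with $\mu=\omega_{\sigma'}^{-1}\eta$. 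One then reads off $\sigma'\otimes\rho\cong\sym^2(\sigma')\otimes\mu\oplus\omega_{\sigma'}\mu$, hence $\tau\cong\sym^2(\sigma')\otimes\mu={\rm Ad}(\sigma')\otimes\omega_{\sigma'}\mu$, exactly as you wanted. No symmetry-breaking through $\sym^5(\sigma)\otimes\sigma'^\vee$, no multiplicity bookkeeping, and no appeal to the uniqueness of $\sigma'$ is needed; the detour you begin in your third paragraph is unnecessary.
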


\begin{proof}

From \eqref{eq:sym^5:dec:A6} we obtain
\begin{equation*}
    \sigma\otimes\sym^5(\sigma)\cong\sigma\otimes \sigma^{\vee}\otimes\tau\cong {\rm{Ad}}(\sigma)\otimes\tau\oplus \tau.
\end{equation*}
On the other hand, we have that 
\begin{equation*}
    \sigma\otimes\sym^5(\sigma)\cong\sym^6(\sigma)\oplus \sym^4(\sigma)\otimes \omega_\sigma \cong \sigma\otimes\sigma' \oplus \tau \oplus \sym^4(\sigma)\otimes \omega_\sigma.
\end{equation*}
Thus, 
\begin{equation*}
    {\rm{Ad}}(\sigma)\otimes\tau \cong \sigma\otimes\sigma' \oplus \sym^4(\sigma)\otimes \omega_\sigma.
\end{equation*}
Therefore, $\sigma\otimes\sigma'$ is a subrepresentation of ${\rm Ad}(\sigma)\otimes\tau$. By \ref{item:app:3.2}, noting that ${\rm Ad}(\sigma)$ is selfdual, we have that $\tau$ is a subrepresentation of 
\begin{equation*}
    {\rm Ad}(\sigma)\otimes\sigma\otimes\sigma'\cong \left( A^3(\sigma)\otimes \sigma'\right) \oplus \left( \sigma\otimes \sigma'\right).
\end{equation*}
We note that $\tau$ is not a subrepresentation of $\sigma\otimes\sigma'$, since $\sigma\otimes\sigma'$ is irreducible. Thus, we have that $\tau$ is a subrepresentation of $A^3(\sigma)\otimes\sigma'$. Then $A^3(\sigma)$ is a subrepresentation of $\sigma'^\vee\otimes\tau$, and there exists a (necessarily irreducible) two-dimensional representation $\tau_0$ such that
\[
\sigma'^\vee\otimes\tau\cong A^3(\sigma)\oplus \tau_0.
\] 
Then $\tau$ is a subrepresentation of $\sigma'\otimes\tau_0$ and, by comparing dimensions, we see that $\sigma'\otimes\tau_0$ contains a character. Then, $\tau_0\cong \sigma'\otimes \mu$, for some character $\mu$, and 
\begin{equation*}
    \sigma'\otimes\tau_0\cong \sym^2(\sigma')\otimes\mu\oplus \omega_{\sigma'}\mu.
\end{equation*}
The above implies that $\tau\cong \sym^2(\sigma')\otimes\mu$. We then have that 
\begin{equation*}
    \sym^5(\sigma)\cong \sigma^{\vee}\otimes \tau\cong {\rm{Ad}}(\sigma')\otimes \sigma\otimes \chi,
\end{equation*}
where $\chi=\omega_{\sigma'}\omega_{\sigma}^{-1}\mu$. As for the decomposition, we note that 
\begin{equation*}
	\sym^6(\sigma)\cong \Pi \oplus \tau \cong \sigma\otimes\sigma' \oplus \sym^2(\sigma')\otimes\mu\cong \sigma\otimes\sigma' \oplus {\rm{Ad}}(\sigma')\otimes \chi \omega_\sigma.
\end{equation*}
\end{proof}

\subsection{Remarks}\label{rmks:quasi-ico:unique:chi} Over number fields, Ramakrishnan refers to the representations satisfying the equivalente conditions of Theorem \ref{thm:sym6:dec} as quasi-icosahedral, see \cite{Ra2009}. Over function fields, we shall make a refinement to the irreducibility criterion presented in Theorem \ref{thm:sym6:dec}. We prove that these representations are icosahedral in Theorem~\ref{thm:icosahedral}, by proving they have finite image in ${\rm PGL}_2(\overline{\mathbb{Q}}_\ell)$.

Furthermore, observe that the representation $\sigma'$ of Theorem \ref{thm:sym6:dec} is very peculiar. For instance, $\sigma'\not\cong \sigma\otimes \mu$ for every character $\mu$. Also, since $\sym^5(\sigma)\cong {\rm{Ad}}(\sigma')\otimes\sigma\otimes\chi$ is irreducible, then ${\rm{Ad}}(\sigma')\not\cong {\rm{Ad}}(\sigma)\otimes\mu$ for every character $\mu$. However, we shall prove in Proposition \ref{prop:gal:sym6:sym2} below that $\sym^3(\sigma')\cong \sym^3(\sigma)\otimes \mu$ for some character $\mu$. This identity brings to light the fact that the properties of the character $\chi$ of Proposition \ref{prop:sym6:chi} determine it uniquely, see Corollary~\ref{cor:unique:chi}.

\subsection{} We now address a couple of delicate aspects of the criteria, giving a precise description of the representation $\sigma'$ and character $\chi$ appearing in \S~\ref{ss:sym6:crit:I}.

\begin{prop}\label{prop:gal:sym6:sym2}
Let us assume that $\sym^4(\sigma)$ is irreducible and that $\sym^6(\sigma)$ is reducible. Let $\sigma'$ be the representation in Theorem \ref{thm:sym6:dec}. Then there exists a unique character $\mu$ such that
\begin{equation*}
    \sym^3(\sigma')\cong \sym^3(\sigma)\otimes \mu.
\end{equation*}
More precisely, $\mu=\omega_{\sigma'} \omega_\sigma^{-1} \chi$, where $\chi$ is as in Proposition \ref{prop:sym6:chi}, and satisfies
\begin{equation*}
	\mu=(\eta \omega_\sigma^2)^3,
\end{equation*}
for a quadratic character $\eta$.
\end{prop}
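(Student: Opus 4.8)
The plan is to exploit the relations from Theorem~\ref{thm:sym6:dec} and Proposition~\ref{prop:sym6:chi} together with the Clebsch–Gordan decompositions to force an isomorphism between $\sym^3(\sigma')$ and a twist of $\sym^3(\sigma)$. First I would start from $\sym^5(\sigma)\cong{\rm Ad}(\sigma')\otimes\sigma\otimes\chi$ (Proposition~\ref{prop:sym6:chi}) and also from $\sym^5(\sigma)\cong{\rm Ad}(\sigma)\otimes\sigma'$ (Theorem~\ref{thm:sym6:dec}). Tensoring the first relation by $\sigma'$ and using $\sigma'\otimes{\rm Ad}(\sigma')\cong A^3(\sigma')\oplus\sigma'$ (Clebsch–Gordan, after untwisting determinants) produces
\[
\sigma'\otimes\sym^5(\sigma)\cong\bigl(A^3(\sigma')\otimes\sigma\otimes\chi\bigr)\oplus\bigl(\sigma'\otimes\sigma\otimes\chi\bigr).
\]
On the other hand, tensoring $\sym^5(\sigma)\cong{\rm Ad}(\sigma)\otimes\sigma'$ by $\sigma'$ and expanding $\sigma'\otimes\sigma'\cong\sym^2(\sigma')\oplus\omega_{\sigma'}$ gives
\[
\sigma'\otimes\sym^5(\sigma)\cong\bigl({\rm Ad}(\sigma)\otimes\sym^2(\sigma')\bigr)\oplus\bigl({\rm Ad}(\sigma)\otimes\omega_{\sigma'}\bigr).
\]
Comparing these two $12$-dimensional semisimple decompositions, and noting that $\sigma'\otimes\sigma\otimes\chi$ is irreducible $4$-dimensional while ${\rm Ad}(\sigma)\otimes\omega_{\sigma'}$ is irreducible $3$-dimensional (so they can only match up inside the larger summands), I would isolate a relation linking $A^3(\sigma')\otimes\sigma\otimes\chi$ with ${\rm Ad}(\sigma)\otimes\sym^2(\sigma')$.

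The cleaner route is probably to instead compute $\sym^2(\sigma)\otimes\sym^5(\sigma)$ in two ways, or better, to use symmetry between $\sigma$ and $\widetilde\sigma:=\sigma'\otimes\xi$: the identity $\sym^5(\sigma)\cong{\rm Ad}(\sigma')\otimes\sigma\otimes\chi$ together with $\sym^5(\sigma)\cong{\rm Ad}(\sigma)\otimes\sigma'$ is visibly symmetric in $(\sigma,\sigma')$ up to a twist, and applying Theorem~\ref{thm:sym6:dec} with the roles reversed should yield that $\sym^5(\sigma')$ is also reducible appropriately, hence (by the $n=4,5$ theory of \S\ref{sec:Galois:crit:II}, since $\sigma'$ cannot be solvable polyhedral here) $\sym^4(\sigma')$ is irreducible too. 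Then I would decompose $\sigma\otimes A^3(\sigma)\otimes\sigma'$ — on one side via $\sigma\otimes A^3(\sigma)\cong A^4(\sigma)\oplus\sym^2(\sigma)$, on the other via $A^3(\sigma)\otimes\sigma'\cong$ (something built from $\sym^5(\sigma)$ and a twist of $\sigma$ using Theorem~\ref{thm:sym6:dec} read as $A^3(\sigma)\otimes\sigma'\subset\sigma\otimes{\rm Ad}(\sigma)\otimes\sigma'\cong\sigma\otimes\sym^5(\sigma)$) — and match irreducible constituents until a $4$-dimensional piece forces $\sym^3(\sigma')\cong\sym^3(\sigma)\otimes\mu$ with $\mu=\omega_{\sigma'}\omega_\sigma^{-1}\chi$. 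Comparing determinants on both sides of $\sym^3(\sigma')\cong\sym^3(\sigma)\otimes\mu$ gives $\omega_{\sigma'}^6=\omega_\sigma^6\mu^4$, which combined with $\mu=\omega_{\sigma'}\omega_\sigma^{-1}\chi$ pins down $\chi$ (hence uniqueness of $\mu$): one gets $\mu^{2}=(\omega_\sigma^{-1}\omega_{\sigma'})^{6}$ up to the relation, and writing $\omega_{\sigma'}\omega_\sigma^{-1}=\eta\omega_\sigma^{2}$ for a suitable (a priori quadratic) twist-normalizing character $\eta$ yields $\mu=(\eta\omega_\sigma^{2})^{3}$, with $\eta^2=1$ falling out of the determinant comparison.

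For uniqueness of $\mu$: if $\sym^3(\sigma')\cong\sym^3(\sigma)\otimes\mu\cong\sym^3(\sigma)\otimes\mu'$ then $\sym^3(\sigma)\cong\sym^3(\sigma)\otimes(\mu'\mu^{-1})$, and since $\sym^4(\sigma)$ irreducible forces $\sigma$ non-octahedral, Theorem~\ref{thm:sym4} (contrapositive) shows $\sym^3(\sigma)$ admits no nontrivial self-twist, so $\mu=\mu'$.

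The main obstacle I anticipate is the bookkeeping in the double decomposition of a $12$- or $16$-dimensional tensor product: making sure every irreducible constituent is correctly identified (in particular ruling out that $A^3(\sigma')\otimes\sigma\otimes\chi$ decomposes further, which uses irreducibility of $\sym^4(\sigma)$ and of $\sym^4(\sigma')$ via \S\ref{sec:Galois:crit:II}), and then extracting exactly the $4$-dimensional relation $\sym^3(\sigma')\cong\sym^3(\sigma)\otimes\mu$ rather than a weaker statement about $6$- or $8$-dimensional pieces. The determinant computation for the final formula $\mu=(\eta\omega_\sigma^2)^3$ is routine once the isomorphism is in hand, but tracking which character deserves to be called $\eta$ (and verifying it is quadratic) requires care with the relation $\chi=\omega_{\sigma'}\omega_\sigma^{-1}\mu$ from Proposition~\ref{prop:sym6:chi}.
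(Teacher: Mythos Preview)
Your overall strategy---tensor $\sym^5(\sigma)$ by $\sigma$ or $\sigma'$, expand both expressions via Clebsch--Gordan, and match constituents---is exactly what the paper does (it tensors by $\sigma$, the mirror image of your first attempt). However, the proposal is a list of possible routes, none carried to completion, and it misses the two steps where the real work lies.

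\textbf{First gap: existence of $\mu$.} After matching the two 12-dimensional decompositions you never isolate a 4-dimensional relation. The paper's argument is more delicate: from $\sigma\otimes\sigma'\hookrightarrow\sym^2(\sigma)\otimes{\rm Ad}(\sigma')\otimes\chi$ one dualizes to place the character $\chi$ inside the 36-dimensional representation $\sym^2(\sigma^\vee)\otimes{\rm Ad}(\sigma')\otimes\sigma\otimes\sigma'$, expands this as a sum of four pieces
\[
\bigl(A^3(\sigma)\otimes A^3(\sigma')\otimes\omega_\sigma^{-1}\bigr)\oplus\bigl(A^3(\sigma)\otimes\sigma'\otimes\omega_\sigma^{-1}\bigr)\oplus\bigl(\sigma^\vee\otimes A^3(\sigma')\bigr)\oplus\bigl(\sigma^\vee\otimes\sigma'\bigr),
\]
and then \emph{eliminates} three of them. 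The elimination of $\sigma^\vee\otimes A^3(\sigma')$ is the subtle one: if $\chi$ sat there, $A^3(\sigma')$ would be reducible, and Theorem~\ref{thm:A3} would force $\sigma'\cong\sigma\otimes(\text{character})$, contradicting what was established in Remarks~\ref{rmks:quasi-ico:unique:chi}. Only then does one get $A^3(\sigma')\cong A^3(\sigma)^\vee\otimes\omega_\sigma\chi$ and hence $\sym^3(\sigma')\cong\sym^3(\sigma)\otimes\mu$ with $\mu=\omega_{\sigma'}\omega_\sigma^{-1}\chi$. Your sketch never reaches this elimination, and without it the 4-dimensional identification does not follow.

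\textbf{Second gap: the formula $\mu=(\eta\,\omega_\sigma^2)^3$ with $\eta$ quadratic.} Comparing determinants of $\sym^3(\sigma')\cong\sym^3(\sigma)\otimes\mu$ only gives $\omega_{\sigma'}^6=\omega_\sigma^6\mu^4$, hence a constraint on $\mu^4$; this is too weak to force $\eta$ to be quadratic rather than merely of order dividing~4. The paper instead (a) takes determinants of $\sym^5(\sigma)\cong{\rm Ad}(\sigma)\otimes\sigma'$ to obtain $\omega_\sigma^{15}=\omega_{\sigma'}^3$, and (b) takes the \emph{contragredient} of $\sym^3(\sigma')\cong\sym^3(\sigma)\otimes\mu$ to produce a self-twist $\sym^3(\sigma)\cong\sym^3(\sigma)\otimes(\omega_{\sigma'}\omega_\sigma^{-1})^3\mu^{-2}$; since $\sigma$ is not octahedral, Theorem~\ref{thm:sym4} kills this self-twist and yields $\mu^2=(\omega_{\sigma'}\omega_\sigma^{-1})^3=\omega_\sigma^{12}$, whence $\eta:=\mu\omega_\sigma^{-6}$ is genuinely quadratic. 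Your route via $\det(\sym^3)$ alone cannot reach this conclusion, and your proposed definition $\omega_{\sigma'}\omega_\sigma^{-1}=\eta\,\omega_\sigma^2$ presupposes what must be proved. (Your uniqueness argument is correct and coincides with the paper's.)
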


\begin{proof}
The unicity of $\mu$ follows from the irreducibility of $\sym^4(\sigma)$ and Theorem \ref{thm:sym4}. Let $\chi$ be a character as in Proposition \ref{prop:sym6:chi}. We recall that we have
\begin{equation*}
    \sym^5(\sigma)\cong {\rm{Ad}}(\sigma')\otimes\sigma\otimes \chi\cong {\rm{Ad}}(\sigma)\otimes \sigma'.
\end{equation*}
Then 
\begin{equation*}
    \sigma\otimes \sym^5(\sigma) \cong \sigma \otimes {\rm{Ad}}(\sigma')\otimes\sigma\otimes \chi \cong \left( \sym^2(\sigma)\otimes {\rm{Ad}}(\sigma')\otimes\chi\right) \oplus \left( {\rm{Ad}}(\sigma')\otimes\chi \omega_\sigma\right),
\end{equation*}
and 
\begin{equation*}
    \sigma\otimes \sym^5(\sigma) \cong \sigma \otimes {\rm{Ad}}(\sigma)\otimes \sigma'\cong \left( A^3(\sigma)\otimes \sigma'\right) \oplus \left( \sigma\otimes\sigma'\right).
\end{equation*}
Since $\sigma\otimes\sigma'$ is irreducible, the identities above imply that it is a subrepresentation of $\sym^2(\sigma)\otimes {\rm{Ad}}(\sigma')\otimes\chi$. Then $\chi$ is a character contained in $\sym^2(\sigma^\vee)\otimes {\rm{Ad}}(\sigma')\otimes \sigma\otimes\sigma'$. Using Clebsch-Gordan, we see that $\chi$ is contained in 
\begin{equation*}
 \left(A^3(\sigma)\otimes A^3(\sigma')\otimes \omega_\sigma^{-1}\right)\oplus \left( A^3(\sigma)\otimes \sigma'\otimes \omega_\sigma^{-1}\right) \oplus  \left(\sigma^\vee\otimes A^3(\sigma') \right) \oplus \left(\sigma^\vee\otimes\sigma'\right).
\end{equation*}
The representation $\sigma\otimes\sigma'$ being irreducible, the character $\chi$ is not contained in $\sigma^\vee\otimes\sigma'$. Also, since $A^3(\sigma)$ and $\sigma'$ are irreducible and of different dimension, $\chi$ cannot be contained in $A^3(\sigma)\otimes \sigma'\otimes \omega_\sigma^{-1}$. Let us assume that $\sigma^\vee\otimes A^3(\sigma')$ contains $\chi$.  Then $A^3(\sigma')$ is reducible and $\sigma\otimes\chi$ is one of its irreducible factors. Now, since ${\rm{Ad}}(\sigma')$ is irreducible (otherwise, $\sym^5(\sigma)\cong {\rm{Ad}}(\sigma')\otimes\sigma\otimes\chi$ would be reducible), then by Theorem \ref{thm:A3} we have that 
\begin{equation*}
    A^3(\sigma')\cong \sigma'\otimes \mu_0 \oplus \sigma'\otimes \mu_0^2,
\end{equation*}
for some character $\mu_0$. Then $\sigma\otimes \chi$ is (isomorphic to) either $\sigma'\otimes \mu_0$ or $\sigma'\otimes \mu_0^2$. This is a contradiction, for we know that $\sigma'\not\cong \sigma \otimes \mu$ for every character $\mu$.
\noindent 

Therefore, $A^3(\sigma)\otimes A^3(\sigma')\otimes \omega_\sigma^{-1}$ contains $\chi$. This implies that $A^3(\sigma')$ is irreducible and $A^3(\sigma')\cong A^3(\sigma)^\vee\otimes \omega_\sigma \chi$. Then if we let $\mu = \omega_{\sigma'} \omega_\sigma^{-1} \chi$, we get
\begin{equation*}
 \sym^3(\sigma')\cong \sym^3(\sigma)\otimes \mu,
\end{equation*}
as desired. 

Now, by comparing determinants from the relation ${\rm{Sym}}^5(\sigma)\cong {\rm{Ad}}(\sigma)\otimes \sigma'$, we have
	\begin{equation}\label{eq:omega_sgm15}
		\omega_\sigma^{15}=\omega_{\sigma'}^{3}.
	\end{equation}
On the other hand, by taking contragradient from both sides of ${\rm{Sym}}^3(\sigma')\cong {\rm{Sym}}^3(\sigma)\otimes \mu$, we obtain ${\rm{Sym}}^3(\sigma')\otimes \omega_{\sigma'}^{-3}\cong {\rm{Sym}}^3(\sigma)\otimes \omega_\sigma^{-3}\mu^{-1}$. Thus, 
	\begin{equation*}
		{\rm{Sym}}^3(\sigma)\otimes \mu \cong {\rm{Sym}}^3(\sigma')\cong {\rm{Sym}}^3(\sigma)\otimes (\omega_{\sigma'}\omega_\sigma^{-1})^3\mu^{-1},
	\end{equation*}
i.e.
\begin{equation*}
{\rm{Sym}}^3(\sigma)\cong {\rm{Sym}}^3(\sigma)\otimes (\omega_{\sigma'}\omega_\sigma^{-1})^3\mu^{-2}.
\end{equation*}

Since $\sigma$ is not of octahedral type, the above implies that $\mu^2=(\omega_{\sigma'}\omega_\sigma^{-1})^3$. But from \eqref{eq:omega_sgm15} we have $(\omega_{\sigma'}\omega_\sigma^{-1})^3=\omega_\sigma^{12}$. Thus 
	\begin{equation*}
		\mu=\eta \omega_\sigma^6 = (\eta \omega_\sigma^2)^3
	\end{equation*}
	for some quadratic character $\eta$. 
\end{proof}

We now register a consequence that we remarked in \S~\ref{rmks:quasi-ico:unique:chi}.

\begin{cor}\label{cor:unique:chi}
Let $\sigma'$ be the irreducible two-dimensional Galois representation of Theorem \ref{thm:sym6:dec} satisfying
\[ \sym^5(\sigma)\cong {\rm{Ad}}(\sigma)\otimes\sigma'. \]
Then the character $\chi$ of Proposition~\ref{prop:sym6:chi} satisfying
\[
    \sym^5(\sigma)\cong {\rm Ad}(\sigma')\otimes\sigma\otimes\chi
\]
is unique with respect to this property.
\end{cor}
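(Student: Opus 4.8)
The plan is to deduce the uniqueness of $\chi$ from the uniqueness of the character $\mu$ already produced in Proposition~\ref{prop:gal:sym6:sym2}. Fix the representation $\sigma'$ furnished by Theorem~\ref{thm:sym6:dec}; it is unique up to isomorphism, so $\omega_{\sigma'}=\det(\sigma')$ is a well-defined character. Suppose $\chi$ is \emph{any} character with $\sym^5(\sigma)\cong{\rm Ad}(\sigma')\otimes\sigma\otimes\chi$. Then $\chi$ satisfies exactly the hypothesis used at the start of the proof of Proposition~\ref{prop:gal:sym6:sym2}: that argument invokes nothing about $\chi$ beyond the pair of isomorphisms $\sym^5(\sigma)\cong{\rm Ad}(\sigma')\otimes\sigma\otimes\chi\cong{\rm Ad}(\sigma)\otimes\sigma'$ together with the standing hypotheses that $\sym^4(\sigma)$ is irreducible and $\sym^6(\sigma)$ is reducible. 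Running that argument verbatim therefore yields $\sym^3(\sigma')\cong\sym^3(\sigma)\otimes\mu$ with $\mu=\omega_{\sigma'}\omega_\sigma^{-1}\chi$.

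Next I would invoke the uniqueness of $\mu$. Since $\sym^4(\sigma)$ is irreducible, $\sym^3(\sigma)$ is irreducible by Lemma~\ref{lem:redsymm:geqn}, and $\sigma$ is not octahedral by Corollary~\ref{cor:sym:octahedral:reducible}; hence, by Theorem~\ref{thm:sym4}, $\sym^3(\sigma)$ admits no non-trivial self-twist, so the character $\mu$ with $\sym^3(\sigma')\cong\sym^3(\sigma)\otimes\mu$ is uniquely determined (this is the opening observation of the proof of Proposition~\ref{prop:gal:sym6:sym2}). Since $\mu$, $\omega_\sigma$ and $\omega_{\sigma'}$ are now all fixed, the identity $\mu=\omega_{\sigma'}\omega_\sigma^{-1}\chi$ forces $\chi=\omega_\sigma\omega_{\sigma'}^{-1}\mu$, a character not depending on the chosen $\chi$. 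This proves that $\chi$ is unique with the stated property.

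A more self-contained variant rephrases the claim as the absence of a non-trivial self-twist of $\sym^5(\sigma)$: if $\chi,\chi'$ both work then $\sym^5(\sigma)\cong\sym^5(\sigma)\otimes\nu$ with $\nu=\chi'\chi^{-1}$; writing $\sym^5(\sigma)\cong{\rm Ad}(\sigma)\otimes\sigma'$, applying property~\ref{item:app:3.2} to ${\rm Ad}(\sigma)\otimes\sigma'\cong{\rm Ad}(\sigma)\otimes(\sigma'\otimes\nu)$ gives $\sigma'\otimes\nu\hookrightarrow{\rm Ad}(\sigma)\otimes{\rm Ad}(\sigma)\otimes\sigma'$, and the Clebsch--Gordan decomposition of ${\rm Ad}(\sigma)\otimes{\rm Ad}(\sigma)$ (a trivial summand plus summands of dimensions $5$ and $9$) together with a dimension count, using that $\sym^4(\sigma)$ is irreducible of dimension $5$ while ${\rm Ad}(\sigma')$ has dimension $3$, forces $\sigma'\otimes\nu\cong\sigma'$; then $\nu=1$ because $\sigma'$ is non-dihedral, as $\sym^3(\sigma')\cong\sym^3(\sigma)\otimes\mu$ is irreducible, by Theorem~\ref{thm:sym2}. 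In either route the only delicate point is checking that the relevant computation does not depend on which $\chi$ one starts from; everything else is routine bookkeeping with determinants and dimensions, so I anticipate no serious obstacle here.
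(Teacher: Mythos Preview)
Your first argument is correct and close in spirit to the paper's, but the execution differs. The paper does not run Proposition~\ref{prop:gal:sym6:sym2} for a second character; instead it feeds the second character $\xi$ into the $\sym^6$ decomposition of Proposition~\ref{prop:sym6:chi}, compares the two decompositions to obtain ${\rm Ad}(\sigma')\cong{\rm Ad}(\sigma')\otimes\xi\chi^{-1}$, and then invokes Theorem~\ref{thm:A3} (for $\sigma'$, non-dihedral since ${\rm Ad}(\sigma')$ is irreducible) to conclude that $\xi\chi^{-1}\neq 1$ would force $\sym^3(\sigma')$ to be reducible, contradicting Proposition~\ref{prop:gal:sym6:sym2}. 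So the paper reduces to ``no non-trivial self-twist of $\sym^2(\sigma')$'' via Theorem~\ref{thm:A3}, while you reduce to ``no non-trivial self-twist of $\sym^3(\sigma)$'' via Theorem~\ref{thm:sym4}. Both are one-line reductions; yours has the mild advantage of not re-entering the $\sym^6$ decomposition, while the paper's has the advantage of not re-running the longer argument of Proposition~\ref{prop:gal:sym6:sym2}.

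Your second variant contains a slip: ${\rm Ad}(\sigma)\otimes{\rm Ad}(\sigma)$ is $9$-dimensional, and its Clebsch--Gordan decomposition is $A^4(\sigma)\oplus{\rm Ad}(\sigma)\oplus\mathbf{1}$, with summands of dimensions $5$, $3$, $1$ (not ``$5$ and $9$''). The argument still goes through: tensoring with $\sigma'$ gives pieces of dimensions $10$, $6$, $2$; the $6$-dimensional piece ${\rm Ad}(\sigma)\otimes\sigma'\cong\sym^5(\sigma)$ is irreducible, and if $\sigma'\otimes\nu$ sat inside $A^4(\sigma)\otimes\sigma'$ then by~\ref{item:app:3.2} the $5$-dimensional $A^4(\sigma)$ would embed in the $4$-dimensional $\sigma'^\vee\otimes\sigma'\otimes\nu$, which is absurd. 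Hence $\sigma'\otimes\nu\cong\sigma'$, and $\nu=1$ since $\sigma'$ is non-dihedral (indeed ${\rm Ad}(\sigma')$ is irreducible). With the dimension correction this route is also valid.
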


\begin{proof}
If $\xi$ were another character satisfying $\sym^5(\sigma)\cong {\rm Ad} (\sigma') \otimes \sigma \otimes \xi$, then Proposition~\ref{prop:sym6:chi} tells us that
\[ \sym^6(\sigma)\cong \sigma\otimes\sigma' \oplus {\rm{Ad}}(\sigma')\otimes \xi \omega_\sigma. \]
Hence, we would have
\[ {\rm{Ad}}(\sigma')\cong {\rm Ad}(\sigma')\otimes \xi\chi^{-1}. \]
If $\xi\chi^{-1}$ were not trivial, then by Theorem \ref{thm:A3}, $\sym^3(\sigma')$ would be reducible, a contradiction.
\end{proof}

\subsection{}
Let us denote $\xi:=\eta^{-1} \omega_\sigma^{-2}$ and $\widetilde\sigma:=\sigma'\otimes\xi$. Thus, $\sym^3(\sigma)\cong\sym^3(\widetilde\sigma)$. We observe that this implies in particular that $\sym^3(\widetilde\sigma)$ is irreducible. Also, by Theorem \ref{thm:sym4}, we have that $\sym^4(\widetilde\sigma)$ is irreducible. Furthermore, by decomposing via the Clebsch-Gordan formulas both sides of
\[ \sym^3(\sigma)\otimes \sym^3(\sigma)\cong \sym^3(\widetilde\sigma)\otimes\sym^3(\widetilde\sigma)\]
and using the fact that $\sym^6(\sigma)$
is reducible, we deduce that $\sym^6(\widetilde\sigma)$ is reducible. As a conclusion of this, $\widetilde\sigma$ satisfies the conditions in Theorem \ref{thm:sym6:dec}. 

 We write $\Gamma = \sigma(G_F)$ and $\widetilde \Gamma = \widetilde\sigma(G_F)$. We shall need the following useful fact. 

\begin{lem}
The groups $\Gamma$ and $\widetilde \Gamma$ consist of semisimple automorphisms. 
\end{lem}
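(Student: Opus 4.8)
The plan is to rule out non-semisimple elements by examining their images under $\sym^6$. Recall that throughout this subsection we are in the regime where $\sym^4(\sigma)$ is irreducible and $\sym^6(\sigma)$ is reducible, and that, as observed just above, $\widetilde\sigma=\sigma'\otimes\xi$ is irreducible (a character twist of the irreducible $\sigma'$) with $\sym^6(\widetilde\sigma)$ reducible as well. Since $\sigma$ and $\widetilde\sigma$ are irreducible they are semisimple, so by Chevalley's theorem $\sym^6(\sigma)$ and $\sym^6(\widetilde\sigma)$ are semisimple $7$-dimensional representations of $G_F$; being moreover reducible, each of them admits a $G_F$-stable decomposition of its underlying space into two nonzero subspaces, hence into subspaces of dimension at most $6$. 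Fix such a decomposition $\sym^6(\sigma)\cong\rho_1\oplus\rho_2$ with $1\leq\dim\rho_i\leq 6$, and likewise for $\widetilde\sigma$.

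I would then argue by contradiction for $\Gamma=\sigma(G_F)$, the argument for $\widetilde\Gamma$ being verbatim the same with $\widetilde\sigma$ in place of $\sigma$. Suppose some $g\in G_F$ has $\sigma(g)$ not semisimple. Over the algebraically closed field $\overline{\mathbb{Q}}_\ell$ a non-semisimple element of ${\rm GL}_2(\overline{\mathbb{Q}}_\ell)$ is conjugate to $\lambda u$ for some $\lambda\in\overline{\mathbb{Q}}_\ell^\times$ and some nontrivial unipotent $u$; since Jordan type is a conjugacy invariant and $\sym^6\colon{\rm GL}_2(\overline{\mathbb{Q}}_\ell)\to{\rm GL}_7(\overline{\mathbb{Q}}_\ell)$ is a group homomorphism, we may assume $\sigma(g)=\lambda u$. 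Then $\sym^6(\sigma)(g)=\lambda^6\,\sym^6(u)$, and $\sym^6(u)$ is a \emph{regular} unipotent element of ${\rm GL}_7(\overline{\mathbb{Q}}_\ell)$: on the monomial basis $x^6,x^5y,\dots,y^6$ the nilpotent operator $\sym^6(u)-{\rm Id}$ has one-dimensional kernel (spanned by $x^6$), hence corank one, so $\sym^6(u)$ is a single Jordan block of size $7$. This is the classical fact that the $n$-th symmetric power of a nontrivial unipotent in ${\rm GL}_2$ is a single Jordan block of size $n+1$. Consequently $\sym^6(\sigma)(g)$ has a Jordan block of size $7$.

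This contradicts the first paragraph: $\sym^6(\sigma)(g)$ preserves the decomposition $\rho_1\oplus\rho_2$, so in an adapted basis it is block-diagonal with diagonal blocks of sizes $\dim\rho_1$ and $\dim\rho_2$, whence all its Jordan blocks have size at most $\max(\dim\rho_1,\dim\rho_2)\leq 6<7$. Therefore every element of $\Gamma$ is semisimple, and the identical reasoning applied to $\widetilde\sigma$ shows every element of $\widetilde\Gamma$ is semisimple. The argument is soft; the only step needing care is the computation of the Jordan type of $\sym^6(u)$, which amounts to noting that the nilpotent raising operator on the irreducible $7$-dimensional representation $\sym^6$ of ${\rm SL}_2$ has corank one — no input beyond linear algebra, Chevalley's theorem, and the standing hypothesis that $\sym^6(\sigma)$ (and $\sym^6(\widetilde\sigma)$) is reducible is required.
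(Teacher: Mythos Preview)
Your proof is correct and follows essentially the same approach as the paper: assume a non-semisimple element exists, observe that its image under $\sym^6$ is a scalar times a regular unipotent (a single Jordan block of size $7$), and derive a contradiction from the fact that $\sym^6(\sigma)$ (resp.\ $\sym^6(\widetilde\sigma)$) is semisimple and reducible. The paper phrases the contradiction as ``none of its invariant subspaces have a complement,'' while you phrase it via the bound on Jordan block sizes coming from the block-diagonal form; these are the same observation.
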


\begin{proof}
Let $g\in \Gamma$, and let us assume that $g$ is not semisimple. Then, the Jordan decomposition tells us that there is a suitable basis for $V$ where we may write
\begin{equation*}
	g=\left(\begin{array}{cc}
		\lambda & 1\\
		0 & \lambda
	\end{array}\right)
\end{equation*}
for some $\lambda\in \overline{\mathbb{Q}}_\ell^\times$. But then the Jordan normal form of $\sym^6(g)$ has a single block, i.e., none of its invariant subspaces have a complement. This is not possible, since $\sym^6(\sigma)$ is semisimple and reducible. Since $\sym^6(\widetilde\sigma)$ is also reducible, the same applies to $\widetilde \Gamma$.
\end{proof}

In our study of the image $\Gamma = \sigma(G_F)$, we come across the following \emph{no-small finite subgroups} property:
\begin{quote}
    A compact subgroup $K$ of ${\rm GL}_2(\overline{\mathbb{Q}}_\ell)$, has itself a compact open subgroup $\mathcal{U}$ without non-trivial finite subgroups.
\end{quote}
Actually, if $K$ is any compact subgroup of ${\rm GL}_2(\overline{\mathbb{Q}}_\ell)$, then there exists a finite extension of $\mathbb{Q}_\ell$ with ring of integers $\mathcal{O}$ such that $K\subseteq {\rm GL}_2(\mathcal{O})$. It is well known that ${\rm GL}_2(\mathcal{O})$ contains a compact open subgroup, say $\mathcal{U}_0$, without non-trivial finite subgroups. Then it suffices to take $\mathcal{U}=\mathcal{U}_0\cap K$.

We observe that $\mathcal{U}$ could very well be trivial, and clearly this is the case when $K$ is finite. Note that $\Gamma 
 = \sigma(G_F)$ is compact, hence it satisfies the no-small finite subgroups property.

\begin{thm}\label{thm:icosahedral}
	If ${\rm{Sym}}^4(\sigma)$ is irreducible and ${\rm{Sym}}^6(\sigma)$ is reducible, then $\sigma$ is icosahedral. 
\end{thm}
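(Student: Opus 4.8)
The plan is to leverage all the structure accumulated in \S\ref{sec:Galois:crit:III}: we have the representation $\sigma'$ with $\sym^5(\sigma)\cong{\rm Ad}(\sigma)\otimes\sigma'$, the character $\chi$ with $\sym^5(\sigma)\cong{\rm Ad}(\sigma')\otimes\sigma\otimes\chi$, and — crucially — the twisted representation $\widetilde\sigma=\sigma'\otimes\xi$ with $\sym^3(\sigma)\cong\sym^3(\widetilde\sigma)$, where this last isomorphism can in fact be realized by $\sym^3(g)$ for a suitable $g\in{\rm GL}_2(\overline{\mathbb{Q}}_\ell)$ after choosing bases appropriately (as remarked in the introduction). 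Since $\sym^3\colon{\rm GL}_2(\overline{\mathbb{Q}}_\ell)\to{\rm GL}_4(\overline{\mathbb{Q}}_\ell)$ has finite kernel (it is the scalars $\{\zeta I : \zeta^3=1\}$), the relation $\sym^3(\sigma)\cong\sym^3(\widetilde\sigma)$ forces $\sigma$ and $\widetilde\sigma$ to agree up to a cocycle valued in this finite kernel; concretely, I would show that the subgroup $H=\{h\in G_F : \sigma(h)=\widetilde\sigma(h)\}$ (for the chosen bases) is open in $G_F$, so that $\sigma_H\cong\widetilde\sigma_H$. This is the technical heart of the argument and I expect it to be the main obstacle: one must pass carefully between ``$\sym^3(\sigma(h))=\sym^3(\widetilde\sigma(h))$ for all $h$'' and ``$\sigma(h)=\widetilde\sigma(h)$ on an open subgroup'', using that the fibers of $\sym^3$ are finite and that the continuous map $h\mapsto\sigma(h)\widetilde\sigma(h)^{-1}$ then lands in a finite (hence discrete) set, so the preimage of $\{I\}$ is open.

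Granting $\sigma_H\cong\widetilde\sigma_H=\sigma'_H\otimes\xi_H$, I would then feed this back into the relation $\sym^5(\sigma)\cong{\rm Ad}(\sigma)\otimes\sigma'$. Restricting to $H$ and using $\sigma'_H\cong\sigma_H\otimes\xi_H^{-1}$, we get $\sym^5(\sigma_H)\cong{\rm Ad}(\sigma_H)\otimes\sigma_H\otimes\xi_H^{-1}$. By Clebsch–Gordan, ${\rm Ad}(\sigma_H)\otimes\sigma_H\cong A^3(\sigma_H)\oplus\sigma_H$ (up to twisting by powers of $\omega_\sigma$), which is manifestly reducible (it is a direct sum of a $4$-dimensional and a $2$-dimensional piece). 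Hence $\sym^5(\sigma_H)$ is reducible, and therefore $\sym^5(\sigma)$ is reducible when restricted to the open subgroup $H$. Combined with the general reduction lemmas, this shows $\sigma_H$ itself is reducible: indeed, once some $\sym^n(\sigma_H)$ is reducible, Lemma~\ref{lem:red:symm} applied to $\sigma_H$ forces the maximal irreducible symmetric power of $\sigma_H$ to be finite, and tracing through Theorems~\ref{thm:sym2}–\ref{thm:sym4} (or arguing directly) the image ${\rm proj}(\sigma(H))$ in ${\rm PGL}_2(\overline{\mathbb{Q}}_\ell)$ is finite.

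Now I would climb back up from $H$ to $G_F$. Since $H$ is open, hence of finite index, and ${\rm proj}(\sigma(H))$ is finite, the group $J={\rm proj}(\sigma(G_F))$ is finite as well (it contains ${\rm proj}(\sigma(H))$ with finite index). Thus $\sigma$ has finite image in ${\rm PGL}_2(\overline{\mathbb{Q}}_\ell)$, so $J$ is one of the finite subgroups: cyclic, dihedral, $A_4$, $S_4$, or $A_5$. It cannot be cyclic (abelian), since $\sigma$ is irreducible. It is not dihedral, tetrahedral, or octahedral: by Corollaries~\ref{cor:sym:dihedral:reducible}, \ref{cor:sym:tetrahedral:reducible}, and \ref{cor:sym:octahedral:reducible} those cases would force $M\le 3$, hence $\sym^4(\sigma)$ reducible, contrary to hypothesis. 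The only remaining possibility is $J\cong A_5$, i.e., $\sigma$ is icosahedral. (As a sanity check, $M=5$ in this case: $\sym^5(\sigma)$ is reducible by the above, while $\sym^4(\sigma)$ is irreducible by assumption, consistent with Lemma~\ref{lem:red:symm}.)

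The step I expect to require the most care is the passage from $\sym^3(\sigma)\cong\sym^3(\widetilde\sigma)$ to $\sigma_{H}\cong\widetilde\sigma_{H}$ on an open subgroup. The subtlety is that an abstract isomorphism of $\sym^3$'s need not a priori be $\sym^3$ of a matrix; one needs the observation that the image of $\sym^3$ in ${\rm GL}_4$ is its own normalizer-up-to-scalars in a suitable sense, or more concretely that any $T\in{\rm GL}_4(\overline{\mathbb{Q}}_\ell)$ conjugating $\sym^3(\sigma)(G_F)$ into $\sym^3(\widetilde\sigma)(G_F)$ is, after scaling, of the form $\sym^3(g)$ — this is where the explicit structure of the $\sym^3$ embedding (its image being the $\mathrm{SL}_2$-module $V_3$, with the relevant Lie-algebra normalizer computation) enters. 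Once that is in hand, the finiteness of $\ker(\sym^3)$ and continuity give openness of $H$ routinely via the no-small-subgroups observation already recorded in the excerpt.
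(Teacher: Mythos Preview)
Your overall strategy matches the paper's: realize the intertwiner $\sym^3(\sigma)\xrightarrow{\sim}\sym^3(\widetilde\sigma)$ as $\sym^3(g)$, use the finite kernel of $\sym^3$ to obtain an open $H$ with $\sigma_H\cong\widetilde\sigma_H$, deduce that $\sym^5(\sigma_H)\cong{\rm Ad}(\sigma_H)\otimes\sigma'_H$ is reducible, and conclude ${\rm proj}(\sigma(G_F))$ is finite (hence $A_5$ by elimination). One point worth flagging concerns the step you correctly identify as the crux, namely showing $\varphi=\sym^3(g)$. Your sketched ``normalizer/Lie-algebra'' approach would need the Zariski closure of $\sigma(G_F)$ to contain ${\rm SL}_2$, which is essentially what is at stake. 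The paper's Lemma~\ref{lem:icosahedral:iso} proceeds concretely instead, and the no-small-subgroups property enters \emph{there} rather than at the openness-of-$H$ step: one first disposes of the case $\mathcal{U}_\Gamma$ trivial (then $\Gamma$ is already finite), and otherwise picks a non-identity diagonalizable $h\in\mathcal{U}_\Gamma$; since $h$ has infinite order, its eigenvalues $a,b$ satisfy $a^k\ne b^k$ for all $k\ne 0$, so $\sym^3(h)$ has four distinct diagonal entries, forcing $\varphi$ to be monomial, and a short computation using a non-diagonal element of $\sigma(G_F)$ (available since $\sigma$ is non-dihedral) pins $\varphi$ down to the form $\sym^3(g)$. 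Two minor slips in your write-up: the parenthetical sanity check should read ``$\sym^5(\sigma)$ is \emph{irreducible}'' (that is what $M=5$ means), and ``this shows $\sigma_H$ itself is reducible'' overshoots --- what you actually obtain, and then correctly state, is that ${\rm proj}(\sigma(H))$ is finite.
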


Our proof relies on the following technical lemma. That $\sigma$ is icosahedral follows once we settle that  $J = {\rm proj}(\sigma(G_F))$ is finite, for we are assuming that $\sigma$ is neither dihedral, tetrahedral, nor octahedral.

\begin{lem}\label{lem:icosahedral:iso}
Assume that $\,\mathcal{U}_{\,\Gamma}$ is a non-trivial compact open subgroup of $\,\Gamma=\sigma(G_F)$ without non-trivial finite subgroups. Let
\[ \varphi \colon {\rm{Sym}}^3(\sigma) \xrightarrow {\ \sim\ }{\rm{Sym}}^3(\widetilde\sigma) \]
be a given $G_F$-isomorphism. Then there exist bases for $V$ and $\widetilde V$ and $g\in{\rm{M}}_2(\overline{\mathbb{Q}}_\ell)$ such that the corresponding matrix for $\varphi$ is ${\rm{Sym}}^3(g)$. 
\end{lem}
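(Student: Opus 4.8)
The plan is to produce the matrix $g$ by first diagonalizing the action of a well-chosen single element and then checking that the resulting block structure forces $\varphi$ to be a symmetric cube. I would start by exploiting the no-small-subgroups hypothesis: pick a topological generator (or more precisely an element whose powers are dense) $\gamma$ of the procyclic pro-$\ell$ part of $\mathcal{U}_{\,\Gamma}$, noting that since $\mathcal{U}_{\,\Gamma}$ has no non-trivial finite subgroups its closure is torsion-free, so the eigenvalues $\alpha,\beta$ of $\sigma(\gamma)$ generate a torsion-free subgroup of $\overline{\mathbb{Q}}_\ell^\times$; in particular $\alpha/\beta$ is not a root of unity. By the previous lemma $\sigma(\gamma)$ is semisimple, so after choosing a basis of $V$ we may assume $\sigma(\gamma)=\diag(\alpha,\beta)$, and similarly $\widetilde\sigma(\gamma)=\diag(\widetilde\alpha,\widetilde\beta)$ in a basis of $\widetilde V$. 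Then ${\rm{Sym}}^3(\sigma)(\gamma)$ and ${\rm{Sym}}^3(\widetilde\sigma)(\gamma)$ are diagonal with entries $\alpha^3,\alpha^2\beta,\alpha\beta^2,\beta^3$ and $\widetilde\alpha^3,\widetilde\alpha^2\widetilde\beta,\widetilde\alpha\widetilde\beta^2,\widetilde\beta^3$ respectively.

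The key step is then to compare the two diagonalizations through the intertwiner $\varphi$. Since $\varphi$ conjugates one diagonal operator to the other, and since the four eigenvalues $\alpha^3,\alpha^2\beta,\alpha\beta^2,\beta^3$ are \emph{distinct} — this is exactly where $\alpha/\beta$ not being a root of unity is used — the eigenvalues of the two sides must match as multisets, and $\varphi$ must carry each $1$-dimensional eigenline of the source to the corresponding eigenline of the target. Matching $\{\alpha^3,\alpha^2\beta,\alpha\beta^2,\beta^3\}=\{\widetilde\alpha^3,\widetilde\alpha^2\widetilde\beta,\widetilde\alpha\widetilde\beta^2,\widetilde\beta^3\}$ and using multiplicativity of the matching (the product of all four is $(\alpha\beta)^6=(\widetilde\alpha\widetilde\beta)^6$, the extreme pair $\alpha^3\beta^3$ versus the extreme pair of the other set, etc.) pins down that, up to swapping $\widetilde\alpha\leftrightarrow\widetilde\beta$, we have $\widetilde\alpha=\zeta\alpha$, $\widetilde\beta=\zeta\beta$ for a common scalar $\zeta$ — or more precisely the four targets are obtained by applying a fixed rescaling-and-permutation; one checks the only order-preserving bijection compatible with the semigroup structure is the identity after such a rescaling. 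Consequently $\varphi$ is diagonal in these bases, say $\varphi=\diag(d_0,d_1,d_2,d_3)$.

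It then remains to see that a diagonal intertwiner of ${\rm{Sym}}^3$ is itself a symmetric cube. Here I would bring in a second element $\delta\in\mathcal{U}_{\,\Gamma}$, or rather any element $h\in G_F$ with $\sigma(h)$ \emph{not} diagonal in the chosen basis — such $h$ exists because $\sigma$ is irreducible, so $\Gamma$ is not contained in the diagonal torus. Writing $\sigma(h)=\begin{pmatrix} a & b\\ c & d\end{pmatrix}$ with, say, $b\neq 0$, the intertwining relation $\varphi\circ{\rm{Sym}}^3(\sigma)(h)={\rm{Sym}}^3(\widetilde\sigma)(h)\circ\varphi$ becomes a system of polynomial equations in $d_0,\dots,d_3$ and the entries of $\sigma(h),\widetilde\sigma(h)$. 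Comparing the entries of ${\rm{Sym}}^3$ along a single off-diagonal ``staircase'' (the $(i,i+1)$ entries, which involve $d_i/d_{i+1}$ against a ratio of the corresponding entries of the two matrices) shows the $d_i$ are forced into geometric-type progression: there are scalars $u,v$ with $d_i = d_0\, (v/u)^i$ up to the explicit binomial factors that appear in the matrix of ${\rm{Sym}}^3(g)$ for $g=\begin{pmatrix} u & *\\ * & v\end{pmatrix}$. Unwinding, set $g$ to be the diagonal (or suitably triangular) matrix with these entries, possibly after also adjusting the bases of $V$ and $\widetilde V$ by the diagonal changes already in hand; then ${\rm{Sym}}^3(g)$ has exactly the matrix of $\varphi$. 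A dimension/rank check (the map ${\rm{Sym}}^3\colon{\rm M}_2\to{\rm M}_4$ has image a $4$-parameter family and our constraints cut $\varphi$ down to that family) confirms $g\in{\rm M}_2(\overline{\mathbb{Q}}_\ell)$ with ${\rm{Sym}}^3(g)=\varphi$ in the final bases.

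\textbf{Main obstacle.} The delicate point is the second step: ruling out the ``exotic'' matchings of eigenvalue multisets that are not of the form rescale-by-$\zeta$, and more seriously showing that the diagonal intertwiner $\varphi$ lies in the image of ${\rm{Sym}}^3$ rather than merely normalizing the diagonal torus — a priori $\varphi$ could differ from a symmetric cube by an element of the (finite) centralizer/normalizer quotient, and the whole reason for hypothesizing that $\mathcal{U}_{\,\Gamma}$ has \emph{no non-trivial finite subgroups} is precisely to exclude such a finite ambiguity. I expect the argument there to use that $\sigma$ restricted to $\mathcal{U}_{\,\Gamma}$ still has the property that ${\rm{Sym}}^3$ of it intertwines with ${\rm{Sym}}^3(\widetilde\sigma|_{\mathcal{U}_{\,\Gamma}})$, so any discrepancy would yield a non-trivial finite subgroup of $\mathcal{U}_{\,\Gamma}$ via the finite kernel of ${\rm{Sym}}^3\colon{\rm GL}_2\to{\rm GL}_4$, contradicting the hypothesis. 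Making this last deduction precise — tracking where the finite kernel $\{\pm I\}$-type ambiguity enters and showing it is killed — is the heart of the lemma.
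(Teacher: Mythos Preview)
Your architecture matches the paper's: diagonalize a nontrivial $h\in\mathcal{U}_\Gamma$, use distinctness of the four eigenvalues $a^3,a^2b,ab^2,b^3$ to force $\varphi$ to be monomial, pin down the permutation, then use a non-diagonal element of $\Gamma$ to show the diagonal part is a symmetric cube. But there is a real gap at the first step. You assert that torsion-freeness of $\mathcal{U}_\Gamma$ makes the eigenvalues $\alpha,\beta$ of your chosen element generate a torsion-free subgroup of $\overline{\mathbb{Q}}_\ell^\times$, hence $\alpha/\beta$ is not a root of unity. This inference is false: torsion-freeness of $\langle\gamma\rangle\subset{\rm GL}_2$ only says $(\alpha^n,\beta^n)\neq(1,1)$ for $n\neq 0$, which does not prevent $\alpha/\beta$ from being a root of unity---take $\gamma=\diag(\lambda,\lambda\zeta)$ with $\lambda$ a unit that is not a root of unity and $\zeta$ a nontrivial root of unity. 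In that situation the eigenvalues of $\sym^3(\gamma)$ collide and your eigenline argument collapses; moreover the later permutation step needs $a^k\neq b^k$ for \emph{all} $k\neq 0$, not merely small $k$.

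The paper repairs this by first reducing to $\omega_\sigma$ of finite order: by Henniart--Lemaire one may twist $\sigma$ and $\widetilde\sigma$ by a character $\chi$ so that $\det(\sigma\otimes\chi)$ has finite order, and the very same $\varphi$ intertwines the twisted symmetric cubes. Now $\alpha\beta=\det(h)$ is a root of unity; if $\alpha/\beta$ were one as well, then $\alpha$ and $\beta$ would both be roots of unity and $h$ would have finite order, contradicting the no-small-subgroups hypothesis. That is where the hypothesis actually bites---not, as your ``main obstacle'' suggests, via the finite kernel of $\sym^3\colon{\rm GL}_2\to{\rm GL}_4$ (that kernel is used in the proof of Theorem~\ref{thm:icosahedral}, not in this lemma). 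After this reduction the remaining steps go essentially as you outline, though the paper makes them concrete: writing $\alpha^3=a^{n_0}b^{3-n_0}$ etc.\ and using $(\alpha^3)(\beta^3)=(\alpha^2\beta)(\alpha\beta^2)$, $(\alpha^3)(\alpha\beta^2)=(\alpha^2\beta)^2$ gives the system $n_0+n_3=n_1+n_2$, $n_0+n_2=2n_1$, whose only solutions are the identity and the reversal (both already symmetric cubes); and for the diagonal part $T=\diag(t_1,\dots,t_4)$ one picks any element of $\Gamma$ with a nonzero off-diagonal entry (available since $\sigma$ is not dihedral) and reads off $t_1t_4=t_2t_3$, $t_1t_3=t_2^2$ from a single row of $T\cdot\sym^3(\cdot)\cdot T^{-1}$.
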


\begin{proof}[Proof \emph{(of Lemma \ref{lem:icosahedral:iso})}]
Let us first assume that $\omega_\sigma$ is of finite order. Let $h \in \mathcal{U}_{\,\Gamma}$ be different from the identity. We choose a basis for $V$ such that $h$ is representable by a diagonal matrix, say $h=\diag (a,b)$. Let us choose a basis for $\widetilde V$ such that 
\begin{equation*}
	\varphi\cdot{\rm{Sym}}^3(h)\cdot\varphi^{-1}=\varphi\cdot \diag(a^3,a^2b,ab^2,b^3)\cdot \varphi^{-1}\in {\rm{Aut}}_{\overline{\mathbb{Q}}_\ell}(\widetilde V)
\end{equation*}
is representable by a diagonal matrix. Since $\varphi \colon {\rm{Sym}}^3(\sigma) \rightarrow{\rm{Sym}}^3(\widetilde\sigma)$ is a $G_F$-isomorphism, we may write 
\begin{equation}\label{eq:varphi_sym3(h)}
	\varphi\cdot \diag(a^3,a^2b,ab^2,b^3)\cdot \varphi^{-1}=  \diag(\alpha^3,\alpha^2\beta,\alpha\beta^2,\beta^3)
\end{equation}
for some $\alpha,\beta\in\overline{\mathbb{Q}}_\ell$. We note that all the non-zero entries of ${\rm{Sym}}^3(h)$ are different from each other. Otherwise we would have that $a^k=b^k$ for some integer $k>0$, and since $h$ has a finite-order determinant, then $a^n=b^n=1$ for some other integer $n>0$. But then $h$ would generate a finite non-trivial group, contradicting the no-small finite subgroups property of $\Gamma$. Thus, we may write $\varphi$ as $TM$, where $T$ is diagonal and $M$ a permutation matrix. Let us see actually that $M$ is either $I=(\delta_{i,j})$ or $I^{-}=(\delta_{i,5-j})$. From \eqref{eq:varphi_sym3(h)}, we have 
\begin{equation*}
	\begin{split}
		\alpha^3 & =a^{n_0}b^{3-n_0},\\
		\alpha^2\beta & = a^{n_1}b^{3-n_1},\\
		\alpha\beta^2 &= a^{n_2}b^{3-n_2},\\
		\beta^3 & = a^{n_3}b^{3-n_3},
	\end{split}
\end{equation*}
for some permutation $(n_0,n_1,n_2,n_3)$ of $(0,1,2,3)$. Again, since $a^k\not= b^k$ for all integers $k\not= 0$, the equations above imply 
\begin{equation*}
	\begin{split}
		n_0+n_3 & =n_1+n_2, \\
		n_0+n_2 & = 2n_1.
	\end{split}
\end{equation*}
It is readily seen that the unique solutions of the latter are $(0,1,2,3)$ and $(3,2,1,0)$, as claimed. 

Since both $I$ and $I^-$ are of the form ${\rm{Sym}}^3(g_0)$ for some $g_0$, it suffices to prove that $T={\rm{Sym}}^3(g)$ for some $g$. Let us suppose that
\begin{equation*}
    T=\diag (t_1,t_2,t_3,t_4).
\end{equation*}
Necessary and sufficient conditions for $T$ to be of the form ${\rm{Sym}}^3(g)$ for some $g$ are $t_1t_4=t_2t_3$ and $t_1t_3=t_2^2$. Let us verify these. Since $\sigma$ is not dihedral, there exists
	$\left( \begin{array}{cc}
		x & y \\
		w & z\\ 
		
	\end{array}\right)\in J$ such that $xy\not = 0$ or $wz\not = 0$. We note that $T\cdot {\rm{Sym}}^3\left( \left( \begin{array}{cc}
		x & y \\
		w & z\\ 
	\end{array}\right)\right) \cdot T^{-1}$ is equal to
	\begin{equation}\label{eq:TsymT^-1}
	\left( \begin{array}{cccc}
		x^3 & x^2y\cdot t_1t_2^{-1}& xy^2\cdot t_1t_3^{-1}& y^3\cdot t_1t_4^{-1} \\
		 * & * & * & *\\
		 * & * & * & * \\ 
		 
		w^3\cdot t_4t_1^{-1} & w^2z\cdot t_4t_2^{-1}& wz^2\cdot t_4t_3^{-1}& z^3 
	\end{array}\right),
	\end{equation}
 where we will only use the entries in the first and last row of the previous matrix. 
 
 Let us suppose that $xy\not = 0$. Since the matrix in \eqref{eq:TsymT^-1} is ${\rm{Sym}}^3(\widetilde h)$ for some $\widetilde h$, then we certainly have
 \begin{equation*}
    \begin{split}
 	x^3y^3\cdot t_1t_4^{-1} & =(x^2y\cdot t_1t_2^{-1})(xy^2\cdot t_1t_3^{-1}),\\
 	x^3(xy^2\cdot t_1t_3^{-1}) & =(x^2y\cdot t_1t_2^{-1})^2.
 	\end{split}
    \end{equation*}
Since $xy\not = 0$, then $t_1t_4^{-1}=t_1^2t_2^{-1}t_3^{-1}$ and $t_1t_3^{-1}=t_1^2t_2^{-2}$. It is straightforward to see that these lead us to the desired properties $t_1t_4=t_2t_3$ and $t_1t_3=t_2^2$. If $wz\not=0$, then we must pay attention to the fourth row of \eqref{eq:TsymT^-1}, and the above works similar.
	
In general, we note that, by \cite{HeLe2011} \S\,IV.2.9, there exists an $\ell$-adic character $\chi$ such that $\det (\sigma\otimes\chi)$ is of finite order. Since every $G_F$-isomorphism
\[ \varphi \colon \rm{Sym}^3(\sigma\otimes\chi) \xrightarrow{\ \sim\ }{\rm{Sym}}^3(\widetilde\sigma\otimes\chi) \]
is in turn a $G_F$-isomorphism
\[ \varphi \colon {\rm{Sym}}^3(\sigma)\xrightarrow{\ \sim\ }{\rm{Sym}}^3(\widetilde\sigma), \]
the proposition follows.
\end{proof}

\begin{proof}[Proof of Theorem \ref{thm:icosahedral}]
Let $\mathcal{U}_{\,\Gamma}$ be a compact open subgroup of $\Gamma$ without non-trivial finite subgroups. If $\mathcal{U}_{\,\Gamma}$ is trivial, then $\Gamma$ is finite and we are done. Hence, we may assume that $\mathcal{U}_{\,\Gamma}$ is non-trivial.

We fix bases for $V$ and $\widetilde V$ and choose $g\in {\rm{Hom}}_{\overline{\mathbb{Q}}_\ell}(V,\widetilde V)$ such that ${\rm{Sym}}^3(g)$ is a $G_F$-isomorphism ${\rm{Sym}}^3(\sigma)\xrightarrow{\ \sim\ }{\rm{Sym}}^3(\widetilde\sigma)$, just as in Lemma \ref{lem:icosahedral:iso}. Since the group homomorphism ${\rm{Sym}}^3\colon{\rm{GL}}_2(\overline{\mathbb{Q}}_\ell)\to {\rm{GL}}_4(\overline{\mathbb{Q}}_\ell)$ has finite kernel, then there exists an open subgroup $U$ of $\Gamma$ such that ${\rm{Sym}}^3\vert_U$ is injective. 
	
If we set $H=\sigma^{-1}(U)$, then $H$ is an open subgroup of $G_F$ and $g$ becomes an $H$-isomorphism $V\xrightarrow{\ \sim\ }\widetilde V$. We observe that since $\sigma_H\cong \widetilde\sigma_H\cong \sigma'_H\otimes\xi_H$, and 
	\begin{equation*}
		{\rm{Sym}}^5(\sigma_H)\cong {\rm{Ad}}(\sigma_H)\otimes \sigma'_H,
	\end{equation*}
	then ${\rm{Sym}}^5(\sigma_H)$ is reducible. This implies that ${\rm proj}(\sigma(H))$ is finite. Since $H$ is open in $G_F$, we are done.
\end{proof}

\subsection{Higher Symmetric Powers}

We here register Ramakrishnan's irreducibility criterion for higher symmetric powers, Theorem A' (c) of \cite{Ra2009}.

\begin{thm}\label{thm:higher}
Let $n>6$ be an integer. Then $\sym^n(\sigma)$ is irreducible if and only if $\sym^6(\sigma)$ is irreducible. 
\end{thm}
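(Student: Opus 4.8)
The plan is to reduce the statement to the already-established low-degree cases via the Clebsch--Gordan machinery of \S\ref{clebshgordan} together with Lemmas \ref{lem:redsymm:geqn} and \ref{lem:red:symm}. One direction is immediate: if $\sym^6(\sigma)$ is reducible, then by Lemma \ref{lem:redsymm:geqn} every $\sym^n(\sigma)$ with $n\geq 6$ is reducible. So the entire content is the reverse implication, i.e.\ that $\sym^6(\sigma)$ irreducible forces $\sym^n(\sigma)$ irreducible for all $n>6$. Equivalently, in the language of the maximal irreducible symmetric power $M$, we must show that $M\geq 6$ already implies $M=\infty$. But Lemma \ref{lem:red:symm} says that if $M$ is finite then $M\in\{1,2,3,5\}$; in particular $M\geq 6$ is incompatible with $M$ being finite. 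Hence $\sym^6(\sigma)$ irreducible (i.e.\ $M\geq 6$) forces $M=\infty$, which is exactly the assertion that $\sym^n(\sigma)$ is irreducible for all $n$, in particular for all $n>6$.

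So the proof is essentially a two-line deduction: \emph{($\Rightarrow$)} if $\sym^6(\sigma)$ is irreducible then, were some $\sym^n(\sigma)$ with $n>6$ reducible, Lemma \ref{lem:redsymm:geqn} (applied with $N=n$ going \emph{backwards}---wait, it only goes forwards)---here one must be slightly careful: Lemma \ref{lem:redsymm:geqn} propagates reducibility \emph{upward}, so to get a contradiction from $\sym^n(\sigma)$ reducible for some $n>6$ one instead argues via $M$: the set of $n$ for which $\sym^n(\sigma)$ is irreducible is, by Lemma \ref{lem:redsymm:geqn}, an initial segment $\{1,2,\dots,M\}$ (or all of $\mathbb{Z}_{\geq 1}$), so $\sym^6(\sigma)$ irreducible means $M\geq 6$, and then Lemma \ref{lem:red:symm} forces $M=\infty$, whence $\sym^n(\sigma)$ is irreducible for every $n$. \emph{($\Leftarrow$)} trivially, if $\sym^n(\sigma)$ is irreducible for some $n>6$ then $M\geq n>6$, so again $M=\infty$ by Lemma \ref{lem:red:symm}, and in particular $\sym^6(\sigma)$ is irreducible.

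Concretely I would write: Suppose $\sym^6(\sigma)$ is irreducible. Then $M\geq 6$ by the definition of $M$ together with Lemma \ref{lem:redsymm:geqn}, which guarantees that the symmetric powers that are irreducible form an initial segment. Since Lemma \ref{lem:red:symm} asserts that a finite $M$ lies in $\{1,2,3,5\}$, we conclude $M=\infty$, so $\sym^n(\sigma)$ is irreducible for every $n$, a fortiori for $n>6$. Conversely, if $\sym^n(\sigma)$ is irreducible for some $n>6$, the same initial-segment property gives that $\sym^6(\sigma)$ is irreducible. This disposes of both directions.

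There is essentially no obstacle here; the theorem is a bookkeeping corollary of the two general lemmas of \S\ref{sec:Galois:crit:I}, and the only subtlety worth flagging in the write-up is to invoke Lemma \ref{lem:redsymm:geqn} in the correct direction (reducibility propagates upward, hence irreducibility downward, which is what legitimizes speaking of $M$ as a well-defined threshold). I would keep the proof to three or four sentences and cite Lemmas \ref{lem:redsymm:geqn} and \ref{lem:red:symm} explicitly.
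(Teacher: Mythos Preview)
Your proposal is correct and is exactly the approach the paper takes: the authors state that the theorem ``is a formal consequence of Lemmas \ref{lem:redsymm:geqn} and \ref{lem:red:symm},'' which is precisely your deduction via the initial-segment property of $M$ and the constraint $M\in\{1,2,3,5\}$ when $M$ is finite. Your cleaned-up three-sentence version in the third paragraph is the right level of detail; just drop the exploratory ``wait'' passage from the final write-up.
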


The proof for complex representations over number fields of [\emph{loc.\,cit.}], is also valid in the case of $\ell$-adic representations over function fields. It is a formal consequence of Lemmas \ref{lem:redsymm:geqn} and \ref{lem:red:symm}.

\subsection{}
The following theorem summarizes what we have done so far.

\begin{thm}\label{thm:symm:Galois}
Let $\sigma\colon G_F\to {\rm GL}(V)$ be an irreducible 2-dimensional $\ell$-adic Galois representation. Then the following are equivalent:
\begin{itemize}
    \item[(i)] ${\rm proj}(\sigma(G_F))$ is finite.
    \item[(ii)] There exists an integer $n\geq 2$ such that $\sym^n(\sigma)$ is reducible.
    \item[(iii)] $\sym^6(\sigma)$ is reducible.
\end{itemize}
To be more precise, if the above equivalent conditions are met we then have the following classification. 
\begin{itemize}
    \item[] M=1: $\sigma$ is dihedral.
	\item[] M=2: $\sigma$ is tetrahedral.
	\item[] M=3: $\sigma$ is octahedral. 
	\item[] M=5: $\sigma$ is icosahedral.
\end{itemize}
And, $\sym^5(\sigma)$ is irreducible if and only if $\sym^4(\sigma)$ is irreducible.
\end{thm}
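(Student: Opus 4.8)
The plan is to assemble Theorem~\ref{thm:symm:Galois} as a straightforward corollary of everything established in \S\S\ref{sec:Galois:crit:I}--\ref{sec:Galois:crit:III}, organizing the argument around the maximal irreducible symmetric power $M$. First I would prove the cycle of equivalences (i) $\Rightarrow$ (iii) $\Rightarrow$ (ii) $\Rightarrow$ (i). The implication (iii) $\Rightarrow$ (ii) is trivial (take $n=6$). For (ii) $\Rightarrow$ (iii): if $\sym^n(\sigma)$ is reducible for some $n\geq 2$, then $M$ is finite, and by Lemma~\ref{lem:red:symm} we have $M\in\{1,2,3,5\}$, so in every case $M<6$ and hence $\sym^6(\sigma)$ is reducible by Lemma~\ref{lem:redsymm:geqn}. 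For the remaining implications it is cleanest to run through the classification simultaneously.

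Next I would establish the classification and, along the way, close the loop with (i). Suppose the equivalent conditions hold, so $M$ is finite with $M\in\{1,2,3,5\}$. If $M=1$, then $\sym^2(\sigma)$ is reducible, so $\sigma$ is dihedral by Theorem~\ref{thm:sym2}; conversely, any of the four polyhedral types has finite projective image. If $M=2$, then $\sym^2(\sigma)$ is irreducible (so $\sigma$ is non-dihedral) and $\sym^3(\sigma)$ is reducible, whence $\sigma$ is tetrahedral by Theorem~\ref{thm:A3}. If $M=3$, then $\sym^3(\sigma)$ is irreducible and $\sym^4(\sigma)$ is reducible, so $\sigma$ is octahedral by Theorem~\ref{thm:sym4}. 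The case $M=4$ is excluded by Lemma~\ref{lem:red:symm}. If $M=5$, then $\sym^4(\sigma)$ is irreducible but $\sym^6(\sigma)$ is reducible; by Theorem~\ref{thm:sym45} the irreducibility of $\sym^4(\sigma)$ forces $\sigma$ to be non-solvable-polyhedral (not dihedral, tetrahedral, or octahedral), and then Theorem~\ref{thm:icosahedral} gives that $\sigma$ is icosahedral. In all cases ${\rm proj}(\sigma(G_F))$ is finite, which is (i); conversely, if (i) holds then by the classification of finite subgroups of ${\rm PGL}_2(\overline{\mathbb{Q}}_\ell)$ the image $J$ is cyclic, dihedral, $A_4$, $S_4$, or $A_5$ (it cannot be cyclic since $\sigma$ is irreducible), and in each of these non-trivial cases one of Theorems~\ref{thm:sym2}, \ref{thm:A3}, \ref{thm:sym4}, \ref{thm:icosahedral} (via Corollaries~\ref{cor:sym:dihedral:reducible}, \ref{cor:sym:tetrahedral:reducible}, \ref{cor:sym:octahedral:reducible}) shows some $\sym^n(\sigma)$ is reducible, giving (ii).

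The last sentence, that $\sym^5(\sigma)$ is reducible if and only if $\sym^4(\sigma)$ is, is exactly the equivalence of (i) and (ii) in Theorem~\ref{thm:sym45}, so I would simply cite it. Equivalently, in terms of $M$: if $M\geq 4$ then $M\geq 5$ by Lemma~\ref{lem:red:symm} (which rules out $M=4$), so $\sym^4(\sigma)$ irreducible implies $\sym^5(\sigma)$ irreducible, and the converse is immediate from Lemma~\ref{lem:redsymm:geqn}.

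I do not anticipate a genuine obstacle here, since all the substantive work — in particular the icosahedral case in Theorem~\ref{thm:icosahedral} and the reduction bound in Lemma~\ref{lem:red:symm} — has already been carried out. The only point requiring mild care is making sure the classification of finite subgroups of ${\rm PGL}_2(\overline{\mathbb{Q}}_\ell)$ is invoked correctly (these are the same as the finite subgroups of ${\rm PGL}_2(\mathbb{C})$: cyclic, dihedral, $A_4$, $S_4$, $A_5$) and that the exclusion of the cyclic case follows from irreducibility of $\sigma$, together with the exclusion of $M=4$ from Lemma~\ref{lem:red:symm}; once these are in place the theorem is a bookkeeping assembly of the preceding results.
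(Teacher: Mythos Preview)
Your assembly is essentially the paper's own approach: the theorem is derived as a corollary of Lemmas~\ref{lem:redsymm:geqn}, \ref{lem:red:symm} and the case-by-case Theorems~\ref{thm:sym2}, \ref{thm:A3}, \ref{thm:sym4}, \ref{thm:sym45}, \ref{thm:icosahedral}. Your (ii)~$\Rightarrow$~(iii) via Lemma~\ref{lem:red:symm} is in fact slightly cleaner than the paper's, which splits into $n\le 6$ (Lemma~\ref{lem:redsymm:geqn}) and $n>6$ (Theorem~\ref{thm:higher}); the content is the same since Theorem~\ref{thm:higher} is itself a formal consequence of those two lemmas.

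There is one genuine, if minor, gap in your (i)~$\Rightarrow$~(ii). For $J$ dihedral, $A_4$, or $S_4$, the cited corollaries do give reducibility of some $\sym^n(\sigma)$. But for $J\cong A_5$ you invoke Theorem~\ref{thm:icosahedral}, and that theorem runs the \emph{wrong way}: it assumes $\sym^4(\sigma)$ irreducible and $\sym^6(\sigma)$ reducible and concludes that $\sigma$ is icosahedral. It does not say that an icosahedral $\sigma$ has some reducible symmetric power, and there is no ``Corollary~5.x'' analogous to Corollaries~\ref{cor:sym:dihedral:reducible}--\ref{cor:sym:octahedral:reducible} to cite. The paper avoids this by proving (i)~$\Rightarrow$~(ii) with a single uniform argument: if ${\rm proj}(\sigma(G_F))$ is finite then $\Gamma=\sigma(G_F)$ is finite modulo its center, hence admits irreducible representations of only finitely many dimensions, so $\sym^n(\sigma)$ (of dimension $n+1$) must be reducible for some $n$. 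You should replace your case-by-case (i)~$\Rightarrow$~(ii) with this argument; then the classification of finite subgroups of ${\rm PGL}_2$ is not needed for the equivalence at all, only (implicitly) for the subsequent $M$-classification, where it is already baked into the individual theorems.
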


\begin{proof}
At this point, only the equivalence of (i), (ii) and (iii) require comment.

Assume that ${\rm proj}(\sigma(G_F))$ is finite, in such a way that $\Gamma$ is finite modulo the center. Now, for each $n$, $\sym^n(\sigma)$ can be seen as a representation of $\Gamma$ over the algebraically closed field of characteristic zero $\overline{\mathbb{Q}}_\ell$. Notice that $\Gamma$ can only have irreducible representations (necessarily finite-dimensional) of finitely many dimensions, up to isomorphism. Necessarily, $\sym^n(\sigma)$ is reducible for some integer $n \geq 2$ and (ii) holds. 

Now, assume that $\sym^n(\sigma)$ is reducible for some $n\geq 2$. If $n\leq 6$, then $\sym^6(\sigma)$ is reducible by Lemma~\ref{lem:redsymm:geqn}. And, if $n>6$ Theorem \ref{thm:higher} implies that $\sym^6(\sigma)$ is reducible.

Finally, assume that $\sym^6(\sigma)$ is reducible. Then the maximal irreducible symmetric power satisfies $M\leq 5$. The case of $M=4$ cannot happen by Lemma~\ref{lem:red:symm}, and each case among $M=1,2,3$ is covered by one of the irreducibility criteria of \S~\ref{sec:Galois:crit:II} and Theorem \ref{thm:icosahedral} applies to $M=5$. It follows that ${\rm proj}(\sigma(G_F))$ is finite.
\end{proof}

\section{Passage to Automorphic Representations}\label{pass:to:auto}

The landmark results of L. Lafforgue over a global function field $F$ \cite{LLaff2002}, establish a correspondence between cuspidal automorphic representations of ${\rm GL}_n(\mathbb{A}_F)$ with finite central character and irreducible $n$-dimensional $\ell$-adic Galois representations that are unramified almost everywhere and have finite determinant.

In \cite{HeLe2011}, the global Langlands correspondence is slightly expanded. It is phrased in the context of the more general $\ell$-adic Weil representations, which we have taken to be unramified almost everywhere by definition in \S\,\ref{prelim}. The crucial point in this generalization is that for an arbitrary $\ell$-adic Weil representation $\sigma\colon \mathcal{W}_F\to{\rm GL}(V)$, there exists an $\ell$-adic character $\chi$ such that $\sigma\otimes\chi$ can be extended to an $\ell$-adic Galois representation with finite-order determinant. That is to say, $\sigma\otimes\chi$ lies in the setting of L. Lafforgue \cite{LLaff2002}. More precisely, in \cite{HeLe2011} the authors establish a bijection between the set $\mathcal{G}^n_\ell(F)$ of irreducible $n$-dimensional $\ell$-adic representations of $\mathcal{W}_F$ and the set $\mathcal{A}^n(F)$ of cuspidal automorphic representations of ${\rm GL}_n(\mathbb{A}_F)$, such that if $\sigma\in\mathcal{G}^n_\ell(F)$ corresponds to $\pi\in\mathcal{A}^n(F)$, then it agrees at every place with the local Langlands correspondence established by Laumon, Rapoport and Stuhler \cite{LaRaSt1993}. We write $\sigma \longleftrightarrow \pi$ to denote corresponding representations.

An $\ell$-adic Galois character $\chi$ corresponds to an automorphic character which we again denote in this case by $\chi$, this is possible via class field theory after fixing a field isomorphism $\iota\colon \overline{\mathbb{Q}}_\ell\to \mathbb{C}$, cf. \S~IV.2 of \cite{HeLe2011}. Furthermore, if $\pi\in\mathcal{A}^n(F)$ corresponds to $\sigma\in\mathcal{G}^n_\ell(F)$, then the central character $\omega_\pi$ of $\pi$ corresponds to $\omega_\sigma = \det(\sigma)$.

The correspondence is further extended so that a semisimple $n$-dimensional $\ell$-adic representation $\sigma$ of $\mathcal{W}_F$, written as a direct sum
\[ \sigma = \sigma_1 \oplus \cdots \oplus \sigma_d, \quad \sigma_i \in \mathcal{G}^{n_i}_\ell(F), \]
corresponds to an automorphic representation $\pi$ of ${\rm GL}_n(\mathbb{A}_F)$, given by an isobaric sum
\[ \pi = \pi_1 \boxplus \cdots \boxplus \pi_d, \quad \pi_i \in \mathcal{A}^{n_i}(F). \]
The correspondence is such that $\sigma_i \longleftrightarrow \pi_i$, for each $i = 1, \ldots, d$; and, we have equality of Artin and automorphic $L$-functions
\[
L(s, \sigma_i) = L(s, \pi_i).
\] 
This latter form of the Langlands correspondence between semisimple representations on the Galois side and automorphic representations of the general linear group is the one we consider; we again write $\sigma \longleftrightarrow \pi$ in this setting for corresponding representations.

\subsection{}
We are fortunate in the case of function fields that we can extend Theorem \ref{thm:symm:Galois} to irreducible $\ell$-adic representations of $\mathcal{W}_F$ and pass to the automorphic side of the global Langlands correspondence to obtain Theorem \ref{thm:main:cuspidal} below. The general classification result on the automorphic side can be succintly stated if we let $M$ be the maximal cuspidal symmetric power of $\pi$. When $M$ exists as a finite positive integer, it is determined by ${\rm Sym}^{M}(\pi)$ being cuspidal, while ${\rm Sym}^{M+1}(\pi)$ is non-cuspidal. We write $M = \infty$ in case every symmetric power of $\pi$ is cuspidal.

\begin{thm}\label{thm:main:cuspidal}
Let $\pi$ be a cuspidal automorphic representation of ${\rm GL}_2(\mathbb{A}_F)$. Then ${\rm Sym}^6(\pi)$ is cuspidal if and only if $M=\infty$. If ${\rm Sym}^6(\pi)$ is non-cuspidal, then $\pi$ admits the following classification. 
\begin{itemize}
    \item[] M=1: $\pi$ is dihedral.
    \item[] M=2: $\pi$ is tetrahedral.
    \item[] M=3: $\pi$ is octahedral.
    \item[] M=5: $\pi$ is icosahedral.
\end{itemize}
Additionally, ${\rm Sym}^4(\pi)$ is cuspidal if and only ${\rm Sym}^5(\pi)$ is cuspidal.
\end{thm}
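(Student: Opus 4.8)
The plan is to deduce Theorem \ref{thm:main:cuspidal} from the already-established Galois criterion, Theorem \ref{thm:symm:Galois}, by transporting every statement across the global Langlands correspondence as recalled in this section. First I would fix the cuspidal automorphic representation $\pi$ of ${\rm GL}_2(\mathbb{A}_F)$ with finite central character, which via L. Lafforgue \cite{LLaff2002} together with the $\ell$-adic refinements of Henniart--Lemaire \cite{HeLe2011} corresponds to an irreducible $2$-dimensional $\ell$-adic representation $\sigma$ of $\mathcal{W}_F$; after twisting by a suitable $\ell$-adic character as in \S\,IV.2 of \cite{HeLe2011} we may also take $\sigma$ to have finite-order determinant, so that Theorem \ref{thm:symm:Galois} (stated there for $G_F$, but valid verbatim for $\mathcal{W}_F$ since $\mathcal{W}_F$ is dense in $G_F$ and the arguments only use this density, as observed already in \S\,\ref{sec:sym2}) applies to $\sigma$.

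The central bridge is the following: since $\sigma$ is semisimple and Chevalley's theorem gives that ${\rm Sym}^n(\sigma)$ is semisimple, the extended correspondence between semisimple Galois representations and isobaric automorphic representations sends ${\rm Sym}^n(\sigma)$ to ${\rm Sym}^n(\pi)$, with matching $L$-functions. Hence ${\rm Sym}^n(\pi)$ is cuspidal if and only if ${\rm Sym}^n(\sigma)$ is irreducible; equivalently, ${\rm Sym}^n(\pi)$ is non-cuspidal precisely when ${\rm Sym}^n(\sigma)$ is reducible. I would then define $\pi$ to be dihedral, tetrahedral, octahedral, or icosahedral according to whether $\sigma$ is, which is legitimate because this property is intrinsic to the isomorphism class of $\sigma$ and, by the local compatibility with Laumon--Rapoport--Stuhler \cite{LaRaSt1993}, is determined by $\pi$. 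With this dictionary the maximal cuspidal symmetric power $M$ of $\pi$ coincides with the maximal irreducible symmetric power of $\sigma$.

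Given these translations, each assertion of Theorem \ref{thm:main:cuspidal} is now a direct image of the corresponding assertion of Theorem \ref{thm:symm:Galois}. The statement that ${\rm Sym}^6(\pi)$ is cuspidal if and only if $M=\infty$ is the contrapositive of the equivalence (i)$\Leftrightarrow$(iii) there: ${\rm Sym}^6(\sigma)$ reducible is equivalent to ${\rm proj}(\sigma(\mathcal{W}_F))$ finite, which by Lemma \ref{lem:red:symm} and the criteria of \S\S\,\ref{sec:Galois:crit:II}--\ref{sec:Galois:crit:III} forces $M\in\{1,2,3,5\}$ finite, while conversely $M=\infty$ means no symmetric power is reducible, in particular ${\rm Sym}^6(\sigma)$ is irreducible and ${\rm Sym}^6(\pi)$ is cuspidal. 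The four-line classification $M=1,2,3,5$ is the automorphic shadow of the Galois classification, using that $\pi$ dihedral/tetrahedral/octahedral/icosahedral was \emph{defined} to mean $\sigma$ is. Finally, ${\rm Sym}^4(\pi)$ cuspidal $\Leftrightarrow$ ${\rm Sym}^5(\pi)$ cuspidal is the image of the last line of Theorem \ref{thm:symm:Galois} (equivalently of Theorem \ref{thm:sym45}), namely ${\rm Sym}^5(\sigma)$ irreducible $\Leftrightarrow$ ${\rm Sym}^4(\sigma)$ irreducible.

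The main obstacle is not any deep new argument but the careful bookkeeping of the correspondence: one must check that ${\rm Sym}^n$ of the $\ell$-adic Weil (as opposed to Galois) representation behaves well, that the twisting trick making the determinant finite-order does not disturb reducibility of symmetric powers (a twist of ${\rm Sym}^n(\sigma)$ by a character is reducible iff ${\rm Sym}^n(\sigma)$ is), and that ``cuspidal'' on the automorphic side matches ``irreducible isobaric constituent'' — i.e., that an isobaric sum $\pi_1\boxplus\cdots\boxplus\pi_d$ with $d\geq 2$ is genuinely non-cuspidal, which is built into the Langlands--Jacquet--Shalika theory of isobaric representations and the $L$-function equality. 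Once these compatibilities are recorded, the theorem follows formally; I would therefore present the proof as a short paragraph invoking Theorem \ref{thm:symm:Galois} place by place, with the remark that the passage $\mathcal{W}_F$-irreducible $\leftrightarrow$ ${\rm GL}_n$-cuspidal is exactly the content summarized at the start of \S\,\ref{pass:to:auto}.
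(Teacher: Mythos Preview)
Your proposal is correct and follows essentially the same route as the paper's proof: transport everything across the global Langlands correspondence, use the Henniart--Lemaire twisting trick to land in the setting of Theorem~\ref{thm:symm:Galois}, note that twisting by a character does not affect reducibility of symmetric powers, and invoke density of $\mathcal{W}_F$ in $G_F$ to match projective images. The only cosmetic difference is that the paper phrases the passage by explicitly extending $\sigma\otimes\chi$ to a representation of $G_F$ and applying Theorem~\ref{thm:symm:Galois} there, rather than asserting that the theorem holds verbatim for $\mathcal{W}_F$; also, you need not (and the paper does not) assume $\pi$ has finite central character, since the Henniart--Lemaire correspondence already covers arbitrary cuspidal $\pi$.
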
 
\begin{proof}
Let $\sigma\colon \mathcal{W}_F\to {\rm GL}(V)$ be the irreducible 2-dimensional $\ell$-adic representation attached to $\pi$. Let $\chi$ be a character such that $\sigma\otimes\chi$ can be extended to $G_F$. 

We continue to denote the extension of $\sigma\otimes\chi$ to $G_F$ by $\sigma\otimes\chi$. For every integer $n\geq2$, we consider the representations 
\begin{equation*}
	\sym^n(\sigma\otimes\chi)=\sym^n(\sigma)\otimes\chi^n\colon G_F\to{\rm GL}(\sym^n(V)),
\end{equation*}
and 
\begin{equation*}
	\sym^n(\sigma)\colon \mathcal{W}_F\to{\rm GL}(\sym^n(V)).
\end{equation*}
Observe that $\sym^n(\sigma\otimes\chi)$ is reducible if and only if $\sym^n(\sigma)$ is reducible.

If $\sym^n(\sigma)$ is reducible for some $n$ then  ${\rm proj}((\sigma\otimes\chi)(G_F))$ is finite by Theorem \ref{thm:symm:Galois}. Since $\mathcal{W}_F$ is dense in $G_F$ and the involved representations to ${\rm PGL}_2(\overline{\mathbb{Q}}_\ell)$ are continuous, ${\rm proj}((\sigma\otimes\chi)(\mathcal{W}_F)) = {\rm proj}(\sigma(\mathcal{W}_F))$ is forced to equal ${\rm proj}((\sigma\otimes\chi)(G_F))$.

With these observations, the theorem follows from Theorem \ref{thm:symm:Galois}, via the global Langlands correspondence.
\end{proof}

Over number fields, the cuspidality criterion for symmetric powers is conjectured in \cite{KiSh2002cusp}, see \S\,3 Conjecture therein; one can there find proofs of the characteristic zero statements corresponding to the cases $M=2, 3$ of Theorem~\ref{thm:main:cuspidal}.

Let us next proceed to the automorphic statements corresponding to the detailed criteria obtained along the course of proving our results on the Galois side. In the refined statements, we make use of quadratic base change and automorphic induction, available to us in greater generality thanks to the work of Henniart-Lemaire \cite{HeLe2011}.

\subsection{On cuspidality criteria for the symmetric square, cube and fourth}\label{ss:auto:sym234}
We work with a cuspidal representation $\pi$ of ${\rm GL}_2(\mathbb{A}_F)$, and make use of the following notation:
\[ A^n(\pi) = \sym^n(\pi) \otimes \omega_\pi^{-1}, \]
where $\omega_\pi$ denotes the central character of $\pi$. The case of $n=2$ is the adjoint lift of Gelbart-Jacquet \cite{GeJa1978}, which we denote by ${\rm Ad}(\pi)$.

To begin with, in the case of symmetric square, Galois Theorem~\ref{thm:sym2} allows us to infer that the following statements are equivalent:
\begin{enumerate}
    \item[(i)] $\sym^2(\pi)$ is non-cuspidal. 
    \item[(ii)] $\pi\cong \pi\otimes\chi$ for some non-trivial character $\chi$. 
\end{enumerate}
Note that $\pi$ is dihedral precisely when these conditions are met, by Theorem~\ref{thm:main:cuspidal}. 

From Galois Theorem~\ref{thm:A3}, translated to the automorphic side: if $\pi$ is non-dihedral, then $\sym^3(\pi)$ is non-cuspidal if and only if there exists some non-trivial character $\mu$ such that
\[ \sym^2(\pi)\cong \sym^2(\pi) \otimes \mu; \]
the condition being equivalent to $\pi$ being tetrahedral. When this is the case we have the decomposition
\begin{equation*}
    A^3(\pi)\cong \pi\otimes\mu \boxplus \pi\otimes\mu^2.
\end{equation*}

Now, assume that $\sym^3(\pi)$ is irreducible. Via Galois Theorem~\ref{thm:sym4}, we obtain that $\sym^4(\pi)$ is cuspidal if and only if there exists a non-trivial quadratic character $\chi$ such that
\[ \sym^3(\pi)\cong \sym^3(\pi)\otimes \chi; \]
the condition being equivalent to $\pi$ being octahedral. When this is the case, we get 
\begin{equation*}
    A^4(\pi)\cong \Pi_E^F(\omega_{\pi_E} \mu) \boxplus \sym^2(\pi)\otimes\chi,
\end{equation*}
where $E/F$ is the quadratic extension corresponding to $\chi$ via class field theory and $\mu$ is some cubic character of $\mathbb{A}_E^\times$. Let us explain the notation, where we are assuming $\pi \longleftrightarrow \sigma$, i.e., $\pi\in\mathcal{A}^2(F)$ corresponds to $\sigma\in\mathcal{G}^2_\ell(F)$ under global Langlands; with corresponding base change $\pi_E \longleftrightarrow \sigma_E$. Then we have induced representations on the Galois side and monomial representations on the automorphic side, in particular,
\[ \rm{Ind}_E^F(\omega_{\sigma_E} \mu) \longleftrightarrow \Pi_E^F(\omega_{\pi_E} \mu), \]
where $\mu$ is some cubic character of $G_E$ and $\Pi_E^F$ denotes automorphic induction, cf. \S~IV.5 of \cite{HeLe2011}.

\subsection{On cuspidality criteria involving symmetric sixth}\label{ss:auto:sym6:dec} If $\sym^4(\pi)$ is cuspidal, then Galois Theorem~\ref{thm:sym6:dec} leads us to conclude that the following statements are equivalent:
\begin{enumerate}
    \item [(i)] $\sym^6(\pi)$ is non-cuspidal.
    \item [(ii)] There exists a cuspidal two-dimensional representation $\pi'$ of ${\rm GL}_2(\mathbb{A}_F)$ such that $\sym^5(\pi)\cong {\rm{Ad}}(\pi)\boxtimes\pi'$.
\end{enumerate}
The representation $\pi'$ is uniquely determined up to isomorphism by (ii).

Next, we look into the results corresponding to Galois Proposition~\ref{prop:sym6:chi}. Assume that $\sym^4(\pi)$ is cuspidal and $\sym^6(\pi)$ is non-cuspidal. Then there exists a unique character $\chi$ such that
\begin{equation*}\label{eq:auto:prop:sym6}
\sym^5(\pi)\cong {\rm Ad}(\pi') \boxtimes \pi \otimes \chi,	
\end{equation*}
and we have the decomposition 
\begin{equation*}\label{eq:auto:prop:sym6:dec}
    \sym^6(\pi) \cong \pi \boxtimes \pi' \boxplus {\rm{Ad}}(\pi') \otimes \chi \omega_\pi.
\end{equation*}
Notice that the uniqueness of $\chi$ is obtained via Galois Corollary~\ref{cor:unique:chi}. Furthermore, from Galois Proposition~\ref{prop:gal:sym6:sym2}, we obtain that
\begin{equation*}
    \sym^3(\pi') \cong \sym^3(\pi) \otimes \mu,
\end{equation*}
for $\mu = \omega_{\pi'} \omega_\pi^{-1} \chi$; and, one can also write $\mu=(\eta\,\omega_\pi^2)^3$ for a quadratic character $\eta$.

\bibliographystyle{amsalpha}

\begin{thebibliography}{A}

\bibitem{Dr1978} V. G. Drinfeld, \emph{Langlands' conjecture for ${\rm GL}(2)$ over function fields}, Proc. Int. Cong. Math., Helsinki, 1978, Vol. 2 (1980), 565--574.

\bibitem{GeJa1978} S. Gelbart and H. Jacquet, \emph{A relation between automorphic representations of ${\rm GL}(2)$ and ${\rm GL}(3)$}, Ann. Sci. {\'E}c. Norm. Sup{\'e}r. {\bf 11} (1978), 471--542.

\bibitem{HeLe2011} G. Henniart and B. Lemaire, \emph{Changement de base et induction automorphe pour ${\rm GL}_n$ en caractéristique non nulle}, M{\'e}m. Soc. Math. Fr., Nouv. S{\'e}r. {\bf 124} (2011), 1--190.


\bibitem{igusa1959fibreIII} J. Igusa, \emph{Fibre systems of Jacobian varieties. III: Fibre systems of elliptic curves}, Am. J. Math. {\bf 81} (1959), 453--476.


\bibitem{Ki2003} H. H. Kim, \emph{Functoriality for the exterior square of ${\rm GL}_4$ and the symmetric fourth of ${\rm GL}_2$}, J. A. M. S. {\bf 16} (2003), 139-183.

\bibitem{Ki2004} H. H. Kim, \emph{An example of non-normal quintic automorphic induction and modularity of symmetric powers of cusp forms of icosahedral type}, Invent. Math. {\bf 156} (2004), 495--502.

\bibitem{KiSh2002cusp} H. H. Kim and F. Shahidi, \emph{Cuspidality of symmetric powers with applications}, Duke Math. J. {\bf 112} (2002), 177--197.

\bibitem{KiSh2002} H. H. Kim and F. Shahidi, \emph{Functorial products for ${\rm GL}_2 \times {\rm GL}_3$ and the symmetric cube for ${\rm GL}_2$}, Ann. Math. {\bf 155} (2002), 837--893.

\bibitem{LLaff2002} L. Lafforgue, \emph{Chtoucas de Drinfeld et correspondance de Langlands}, Invent. Math. {\bf 147} (2002), 1--241.

\bibitem{langlands96base} R. P. Langlands, \emph{Base change for ${\rm GL}(2)$}, Ann. Math. Stud., Vol. 96, Princeton University Press, Princeton, 1980.

\bibitem{LaRaSt1993} G. Laumon, M. Rapoport, and U. Stuhler, \emph{$\mathscr{D}$-elliptic sheaves and the Langlands correspondence}, Invent. Math. {\bf 113} (1993) 217--338.

\bibitem{Ra2000} D. Ramakrishnan, \emph{Modularity of the Rankin-Selberg $L$-series, and multiplicity one for ${\rm SL}(2)$}, Ann. Math. {\bf 152} (2000), 45--111.

\bibitem{Ra2009} D. Ramakrishnan, \emph{Remarks on the symmetric powers of cusp forms on ${\rm GL}(2)$}, Contemp. Math. AMS {\bf 488} (2009), 237--256.

\bibitem{Se1972} J.-P. Serre, \emph{Propriétés galoisiennes des points d’ordre fini des courbes elliptiques}, Invent. Math. {\bf 15} (1972), 259--331.


\bibitem{Wang2003} S. Wang, \emph{On the symmetric powers of cusp forms on ${\rm GL}(2)$ of icosahedral type}, Internat. Math. Res. Notices {\bf 44} (2003), 2373--2390.

\end{thebibliography}

\medskip

\noindent{\sc \Small Luis Alberto Lomel\'i, Instituto de Matem\'aticas, Pontificia Universidad Cat\'olica de Valpara\'iso, Blanco Viel 596, Cerro Bar\'on, Valpara\'iso, Chile}\\
\emph{\Small E-mail address: }\texttt{\Small Luis.Lomeli@pucv.cl}

\medskip

\noindent{\sc \Small Francisco Javier Navarro, Instituto de Matem\'aticas, Pontificia Universidad Cat\'olica de Valpara\'iso, Blanco Viel 596, Cerro Bar\'on, Valpara\'iso, Chile}\\
\emph{\Small E-mail address: }\texttt{\Small francisco.navarro.n@pucv.cl}

\end{document}